\pgfplotsset{compat=newest}
\theoremstyle{definition}
\newtheorem{theorem}              {Theorem}
\newtheorem{lemma}      [theorem] {Lemma}
\newtheorem{proposition}[theorem] {Proposition}
\newtheorem{corollary}  [theorem] {Corollary}
\newtheorem{definition} [theorem] {Definition}
\newtheorem{example}    [theorem] {Example}
\newtheorem{remark}     [theorem] {Remark}
\newcommand{\argmax}{\operatornamewithlimits{argmax}}
\newcommand{\adm}[1]{\mathrm{adm}(#1)}
\newcommand{\J}{\mathcal{J}}
\newcommand{\ex}[1]{\mathrm{ex}(#1)}
\newcommand{\coex}[1]{\mathrm{ex}^*(#1)}
\providecommand{\bigsqcap}{%
  \mathop{%
    \mathpalette\@updown\bigsqcup
  }%
}
\newcommand*{\@updown}[2]{%
  \rotatebox[origin=c]{180}{$\m@th#1#2$}%
}
\newcommand{\lat}{\mathcal{L}}
\newcommand{\ind}{\mathcal{I}}
\newcommand{\base}{\mathcal{B}}
\newcommand{\dep}{\mathcal{D}}
\newcommand{\R}{\mathbb{R}}
\newcommand{\COMM}[2]{{
\begin{CJK}{UTF8}{ipxm}
\ifthenelse{\equal{#1}{SN}}{\color{blue}}{
\ifthenelse{\equal{#1}{TM}}{\color{red}}{
\ifthenelse{\equal{#1}{AA}}{\color{cyan}}{
\ifthenelse{\equal{#1}{BB}}{\color{magenta}}}}}
[#1: #2]
\end{CJK}
}}
\begin{document}

\begin{frontmatter}

\title{Rank axiom of modular supermatroids: \\  A connection with directional DR submodular functions}
\author[1]{Takanori Maehara}
\address[1]{RIKEN AIP}
\author[2]{So Nakashima}
\address[2]{the University of Tokyo}

\begin{abstract}
A matroid has been one of the most important combinatorial structures since it was introduced by Whitney as an abstraction of linear independence.
As an important property of a matroid, it can be characterized by several different (but equivalent) axioms, such as the augmentation, the base exchange, or the rank axiom.

A supermatroid is a generalization of a matroid defined on lattices.
Here, the central question is whether a supermatroid can be characterized by several equivalent axioms similar to a matroid.
Barnabei, Nicoletti, and Pezzoli characterized supermatroids on distributive lattices, and Fujishige, Koshevoy, and Sano generalized the results for  cg-matroids (supermatroids on lower locally distributive lattices).

In this study, we focus on modular lattices, which are an important superclass of distributive lattices, and provide equivalent characterizations of supermatroids on modular lattices. 
We characterize supermatroids on modular lattices using the rank axiom in which the rank function is a directional DR-submodular function, which is a generalization of a submodular function introduced by the authors.
Using a characterization based on rank functions, we further prove the strong exchange property of a supermatroid, which has application in optimization.

We also reveal the relation between the axioms of a supermatroid on lower semimodular lattices, which is a common superclass of a lower locally distributive lattice and a modular lattice.
\end{abstract}

\end{frontmatter}

\tableofcontents

\section{Introduction}

\subsection{Background and Motivation}

A \emph{matroid} is a combinatorial structure first introduced by Whitney~\cite{whitney1935abstract} as an abstraction of linear independence.
A matroid is now regarded as one of the most important combinatorial structures and has been studied in various fields such as combinatorics, geometry, and optimization.

Several generalizations of matroid have been proposed.
Here, we focus on a \emph{supermatroid} on lattices as introduced by Dunstan, Ingleton, and Welsh~\cite{dunstan1972supermatroids}.
Let $\lat$ be a finite lattice.
A supermatroid $\ind$ on $\lat$ is an ideal of $\lat$ such that for all $X \in \lat$, the maximal elements in $\ind^X = \{ I \in \ind \mid I \le X \}$ have the same height (i.e., distance from the bottom of $\lat$).
If $\lat$ is a boolean lattice, the supermatroids coincide with the classical matroids.

The classical matroids are characterized by several different (but equivalent) axioms, such as the augmentation, the base exchange, and the rank axioms.
Here, the rank function $r$ of a (super)matroid $\ind$ is defined by
\begin{align}
    r(X) = \max \{ | I | : I \in \ind \}.
\end{align}
Thus, it is natural to ask whether supermatroids have similar equivalent axioms (see \cite{wild2008weakly}).
Barnabei, Nicoletti, and Pezzoli~\cite{barnabei1998matroids} answered this question for supermatroids on distributive lattices, and Fujishige, Koshevoy, and Sano~\cite{fujishige2007matroids} and Sano~\cite{sano2008rank} generalized the results for supermatroids on convex geometries (i.e., lattices of convex sets), where the convex geometries are equivalent to the lower locally distributive lattices.

Our aim is to extend this line of study.
In particular, we are interested in supermatroids on \emph{modular lattices} (also called \emph{modular supermatroids}~\cite{li2014base}).
Modular lattices are a superclass of distributive lattices and are incomparable with lower locally distributive lattices (see Figure~\ref{fig:lattices}).
Modular lattices naturally occur in algebra.
For example, the set of subspaces of a vector space forms a modular lattice.
In addition, the set of normal subgroups of a group form a modular lattice.
Modular lattices are also important in optimization and machine learning.
The authors formulated a subspace selection problem, including principal component analysis, as a maximization problem over modular lattices, and showed that if the objective function satisfies a submodular-like property, called \emph{directional DR-submodularity}, the height and the knapsack constrained problems can be solved within provable approximation factors~\cite{nakashima2018subspace}.

\begin{figure}[tb]
\centering
\begin{tikzpicture}
\node[draw] at (0,0) (d) {Distributive};
\node[draw] at (3,1) (m) {Modular};
\node[draw] at (-3,1) (lld) {Lower Locally Distributive};
\node[draw] at (0,2) (llm) {Lower Locally Modular};
\node[draw] at (0,3) (lsm) {Lower Semimodular};
\draw[->] (d)--(m);
\draw[->] (d)--(lld);
\draw[->] (m)--(llm);
\draw[->] (lld)--(llm);
\draw[->] (llm)--(lsm);
\end{tikzpicture}
\caption{Relation of the underlying lattices. Upper one is a super-class of lower one.}
\label{fig:lattices}
\end{figure}
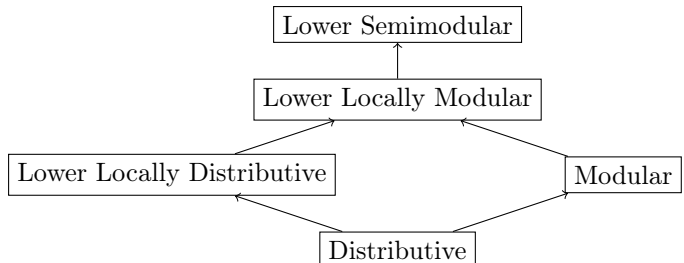

\subsection{Our Contributions}

The contributions of this study are two-fold.
First, we provide several equivalent axioms for supermatroids on modular lattices.
Second, we show that a further generalization for a supermatroid on lower-locally modular lattices, which is a common superclass of lower-locally distributive lattices (studied in \cite{fujishige2007matroids,sano2008rank}) and modular lattices (see Figure~\ref{fig:lattices}) is not straight-forward.

\paragraph{Supermatroids on Modular Lattices}

We defined supermatroids via the height axiom.
Here, we show that supermatroids on \emph{modular lattices} can be equivalently characterized by 
the independence (augmentation) axiom (Theorem~\ref{thm:modular-ind-height}),
the middle base axiom (Theorem~\ref{thm:mid}),
the rank axiom (Theorem~\ref{thm:ind-rank-mod}), 
and the dependence axiom (Theorem~\ref{thm:mod-dep-ind}).
Here, the most interesting result is the rank axiom.

In the case of distributive lattices, the rank function must be a submodular function~\cite{barnabei1998matroids}, 
i.e., the rank function $r \colon \lat \to \mathbb{R}$ satisfies the following \emph{lattice-submodularity}:
\begin{align}
    r(X \lor Y) + r(X \land Y) \le r(X) + r(Y).
\end{align}
More precisely, it satisfies the following \emph{DR-submodularity}~\cite{soma2016maximizing,gottschalk2015submodular}:
\begin{align}
    r(Y \lor b) - r(Y) \le r(X \lor a) - r(X),
\end{align}
where $X, Y \in \lat$; $a \in \adm{X}$; and $b \in \adm{Y}$ such that $X \le Y$ and $a \le b$.
Here, $a \in \adm{X}$ intuitively indicates that $a$ is safely added to $X$ (see Section~\ref{subsec:preliminaries-lattice} for a precise definition).
We can see that the DR-submodularity implies the lattice-submodularity when the lattice is distributive (see Remark~\ref{rem:distributive-DR}).
However, this is not the case in modular lattice, where the rank function does not satisfy the DR-submodular inequality (Proposition~\ref{prop:DR-submodular-is-needed}). 
Thus, we relax the DR-submodularity as follows:

A function $r \colon \lat \to \mathbb{R}$ is a \emph{downward DR-submodular function} if it satisfies the following inequality:
\begin{align}
    r(Y \lor b) - r(Y) \le \max_{a \le a'} \min_{a \colon Y \lor a' = Y \lor b} r(X \lor a) - r(X),
\end{align}
where $X, Y \in \lat$; $a \in \adm{X}$; and $b \in \adm{Y}$ such that $X \le Y$ and $a \le b$.
This function class was introduced by Nakashima and Maehara~\cite{nakashima2018subspace} as a generalization of the DR-submodular function to characterize tractable subspace selection problems (see Section~\ref{subsec:directional-DR}).
Our rank axiom requires the rank function to be a downward DR-submodular function.

Our rank axiom allows us to derive the strong exchange property of supermatroids on modular lattices (Theorem~\ref{thm:strongexchange}), which has applications in optimization problems (see Section~\ref{subsubsec:valuated-supermatroid} and Section~\ref{subsubsec:DR-submodular-opt}).

\paragraph{Supermatroids on Lower Locally Modular Lattices}

It is a natural question whether the relation between the axioms remains true on a class of lattices that is more general than the modular lattices.
Here, we consider \emph{lower-locally modular lattices}, which is a common generalization of modular lattices and lower-locally distributive lattices.

We show that the independence axiom characterizes the matroids; however, all other axioms do not (Section~\ref{sec:llm}).

\subsection{Related Studies}

\paragraph{Matroids on Lattices}

Most studies on supermatroids have focused on supermatroids on a distributive lattice~\cite{barnabei1993symmetric, barnabei1998matroids, li2004global}, which are also called \emph{poset matroids}.
A further generalization was been conducted.
A matroid is a generalization of an arrangement of geometric objects.
In this direction, a\emph{cg-matroid}~\cite{fujishige2007matroids} has also been studied.
Cg-matroids are equivalent to supermatroids over lower locally distributive lattices~\cite{edelman1980meet}. 
Fujishige, Koshevoy, and Sano~\cite{fujishige2007matroids} studied supermatroids on lower locally distributive lattices, and proved the equivalence of some axioms.
Sano~\cite{sano2008rank} characterized supermatroids on lower locally distributive lattices in terms of their rank function.
Another study was been conducted on \emph{modular lattices}. 
Li and Liu~\cite{li2014base} showed the base axiom for supermatroids on modular lattices.

One of the purposes of this study is to complete the characterization of supermatroids on modular lattices.
Another is to unify the theory of lower locally distributive and modular lattices.
We consider \emph{lower locally modular lattices}, which contain both lower locally distributive lattices and modular lattices as proper subclasses.

\paragraph{DR-Submodular Function}

In the machine-learning community, submodular functions (on the set lattices) have been widely studied.
Soma and Yoshida~\cite{soma2016maximizing} introduced a class of functions on an integer lattice ($\mathbb{Z}^n$), called \emph{DR-submodular functions}, as a generalization of the submodular functions on set lattices.
The authors generalized this concept for a \emph{modular lattice} and derived an approximation algorithm~\cite{nakashima2018subspace} for maximization problems of the functions.

\section{Preliminaries}

\subsection{Lattice}
\label{subsec:preliminaries-lattice}
Let $(P, \le)$ be a poset.
A subset $I \subseteq P$ is an \emph{ideal} if $p \in I$ for any $p,p' \in P$ with $p \le p'$ and $p' \in I$.
A subset $F \subseteq P$ is a \emph{filter} if the complementary set $I = P \setminus F$ is an ideal.
A subset $I \subseteq P$ is proper if $I \neq P$.
By $P^*$, we denote the order-reversal of $P$.

A \emph{lattice} $(\lat, \le)$ is a partially ordered set closed under the least upper bound $X \lor Y := \inf \{ Z \mid Z \ge X, Z \ge Y \}$ and the greatest lower bound $X \land Y := \sup \{ Z \mid Z \le X, Z \le Y \}$.
We state that $X$ \emph{covers} $Y$ if $X \le Y$ and there is no $Z \in \lat$ with $X \lneq Z \lneq Y$, and denote $X \prec Y$ if $X$ covers $Y$.
For $X_1, X_2, Y \in \mathcal{L}$, we denote by $X_1 = X_2 \bmod Y$ if $X_1 \lor Y = X_2 \lor Y$.
An element $a \in \lat$ is \emph{join-irreducible} if $a = X \lor Y$ implies $X = a$ or $Y = a$. 
To avoid confusion, we use upper-case letters for general lattice elements and lower-case letters for join-irreducible elements.
The set of join-irreducible elements is denoted by $\J$. 
A lattice $\lat$ is \emph{atomic} if any pair of $\J$ is incomparable. 
A join-irreducible element $a \in \J$ is \emph{admissible} with respect to $X$ if $X \land a \prec a$. 
We denote by $\adm{X}$ the set of admissible elements with respect to $X$. 

A lattice $\mathcal{L}$ is \emph{lower-semimodular (LSM)} if $X \prec X \lor Y$ implies $X \land Y \prec Y$, and is \emph{upper-semimodular (LUM)} if $X \land Y \prec X$ implies $X \prec X \lor Y$.
A lattice $\mathcal{L}$ is \emph{modular} if it is both lower- and upper-semimodular.
A lattice $\mathcal{L}$ is \emph{distributive} if for any $X, Y, Z \in \mathcal{L}$, the following identity holds:
\begin{align}
    X \land (Y \lor Z) = (X \land Y) \lor (X \land Z).
\end{align}
Note that any distributive lattice is also a modular lattice.

For $X \in \mathcal{L}$, let $X^- = \bigwedge \left\{ Y \in \mathcal{L} \mid Y \prec X \right\}$.
The lattice is said to be a \emph{lower locally modular lattice} if for each $X \in \mathcal{L}$, the interval $[X^-, X]$ is a modular lattice, and \emph{lower locally distributive} if $[X^-, X]$ is a distributive lattice.
Any lower locally distributive lattice has a geometric realization called a \emph{convex geometry}~\cite{edelman1980meet}.
The relations of the lattices are summarized in Figure~\ref{fig:lattices}.

For $X, Y \in \lat$ with $X \le Y$, a \emph{chain} is a sequence of the form $X = X_0 \le X_1 \le \dots \le X_m = Y$.
In a lower- or upper-semimodular lattice, for any two elements $X, Y$ with $X \le Y$, the length of the maximal chain, $X = X_0 \prec X_1 \prec \dots \prec X_m = Y$, is the same~\cite{birkhoff1940lattice}, which is called the \emph{Jordan--Dedekind property}.
In lower-semimodular lattices, the height $|X|$ is the length of the maximal chain from $\bot$ to $X$.
The height of a lattice $\lat$ is $|\top|$.

A lattice is a \emph{finite length} if the maximum length of the chains is bounded. 
This study only considers the lattices of a finite length.
A finite length lattice is \emph{bounded}, i.e., it has the top $\top$, which is greater than any other elements, and the bottom $\bot$, which is smaller than any other elements.
For $X \in \lat$, the height $|X|$ is the length of the maximal ascending chain from the bottom to $X$.

A fundamental example of a modular lattice is a \emph{vector lattice}.
A vector lattice $\lat(\mathbb{R}^n)$ consists of all linear subspaces of $\mathbb{R}^n$ equipped with the inclusion order $\le := \subseteq$.
For $X,Y \in \lat(\mathbb{R}^2)$, we have $X \land Y = X \cap Y$, and $X \lor Y$ equals the direct sum of $X$ and $Y$.
We can easily see that $\lat(\mathbb{R}^n)$ is an atomic modular lattice and that join-irreducible elements are of the form $\mathrm{span}(v)$ for a certain $v \in \mathbb{R}^n \setminus \{0\}$.

\subsection{Directional DR-submodular Functions}
\label{subsec:directional-DR}

\begin{definition}[directional DR-submodularity~\cite{nakashima2018subspace}]
\label{def:downwarddr}
Let $\mathcal{L}$ be a lattice.
A function $f \colon \mathcal{L} \to \mathbb{R}$ is \emph{downward DR-submodular} if for all $X, Y \in \mathcal{L}$ with $X \le Y$ and $b \in \adm{Y}$, there exists $b' = b \bmod Y$ such that, for all $a \in \adm{X}$ with $a \le b'$, the following holds:
\begin{align}
\label{eq:downwarddr}
f(Y \lor b) - f(Y) \le   f(X \lor a) - f(X).
\end{align}

A function $f$ is \emph{upward DR-submodular} if $f$ is downward DR-submodular on $\lat^*$.
If a function $f$ is both downward DR-submodular and upward DR-submodular, we state that $f$ is \emph{bidirectional DR-submodular}.
\end{definition}

\begin{remark}
The definition of the upward DR-submodularity is different from the original~\cite{nakashima2018subspace}.
We introduced a weaker definition to investigate the dualistic nature of the modular supermatroids.
See Section~\ref{subsec:modulardual}.
\end{remark}

\section{Supermatroids on Modular Lattices}
\label{sec:modular}

In this section, we reveal the relationship between directional DR-submodularity and modular supermatroids:
Directional DR-submodularity naturally arises as a property of the rank function of a modular supermatroid.
We also introduce some new characterizations of modular supermatroids.
In this section, $\lat$ is a modular lattice and $\ind \subseteq \lat$ if there are no futher specifications.
All proofs are given in Section~\ref{sec:proof}.

\subsection{Height Axiom}

Let $P = (P, \le)$ be a poset.
A subset $\ind \subseteq P$ is a \emph{supermatroid} if it satisfies the following \emph{height axiom}.
\begin{definition}[Height Axiom] 
\label{def:height}
\begin{description}
    \item[(H1)] $\ind$ is a non-empty ideal;
    \item[(H2)] For all $X \in P$, the maximal elements in the intersection of $\ind$ and $\ind^X :=\{X' \in P \mid X' \le X \}$ have the same height.
\end{description}
\end{definition}

The \emph{rank} of a modular supermatroid $\ind$ is the height of the maximal elments of $\ind$, which is uniquely determined by (H2).
A supermatroid over a modular lattice is called a \emph{modular supermatroid}.
A supermatroid over a distributive lattice is called a \emph{distributive supermatroid} or a \emph{poset matroid}.

\subsection{Independence Axiom}

Modular supermatroid $\ind$ is alternatively defined by the following augumentation axiom:
Consider the following condition on $\ind$.
\begin{definition}[Independence Axiom]
\label{def:independence}
\begin{description}
    \item[(I1)] $\ind$ is a non-empty ideal;
    \item[(I2)] For all $I_1, I_2 \in \ind$ with $|I_1| < |I_2|$, there exists $J \in \ind$ such that $I_1 < J \le I_1 \lor I_2$.
\end{description}
\end{definition}

\begin{theorem}[Independence Axiom $\Leftrightarrow$ Height Axiom]
\label{thm:modular-ind-height}
\begin{itemize}
    \item[(1)] Any supermatroid $\ind$ satisfies (I1) and (I2). 
    \item[(2)] Any $\ind$ satisfying (I1) and (I2) is a supermatroid.
\end{itemize}
\end{theorem}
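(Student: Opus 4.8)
The plan is to prove both implications as direct lattice-theoretic transcriptions of the classical matroid augmentation argument, relying only on two structural facts recalled in Section~\ref{subsec:preliminaries-lattice}: the Jordan--Dedekind property (which holds because a modular lattice is lower-semimodular), so that the height $|\cdot|$ is well defined and strictly monotone, i.e.\ $X \lneq Y$ forces $|X| < |Y|$; and the finite length of $\lat$, which guarantees that every element lies below some maximal element of any given order ideal. Since (H1) and (I1) are literally the same statement, all the work lies in passing between (H2) and (I2).

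For part (1), assuming the height axiom, I would take $I_1, I_2 \in \ind$ with $|I_1| < |I_2|$ and set $X = I_1 \lor I_2$, so that both $I_1$ and $I_2$ belong to $\ind \cap \ind^X$. Let $h$ denote the common height of the maximal elements of $\ind \cap \ind^X$, which is well defined by (H2). Because $I_2$ lies below some maximal element of $\ind \cap \ind^X$ (finite length), strict monotonicity of height gives $|I_1| < |I_2| \le h$. If $I_1$ were itself maximal in $\ind \cap \ind^X$, then (H2) would force $|I_1| = h$, contradicting $|I_1| < h$; hence $I_1$ is not maximal, and there is some $J \in \ind \cap \ind^X$ with $I_1 < J$. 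As $J \le X = I_1 \lor I_2$, this $J$ witnesses (I2).

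For part (2), assuming (I1) and (I2), I would verify (H2) by contradiction. Fix $X$ and suppose two maximal elements $M_1, M_2$ of $\ind \cap \ind^X$ satisfy $|M_1| < |M_2|$. Applying (I2) to this pair yields $J \in \ind$ with $M_1 < J \le M_1 \lor M_2$. Since $M_1, M_2 \le X$, we have $M_1 \lor M_2 \le X$, so $J \in \ind \cap \ind^X$ with $J > M_1$, contradicting the maximality of $M_1$. Thus any two maximal elements of $\ind \cap \ind^X$ share a common height, which is exactly (H2).

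The argument is short, so rather than a genuine obstacle the only points requiring care are the two borrowed lattice facts: that strict order implies strictly larger height (needed so a larger independent element genuinely exceeds $I_1$ in height), and that maximal elements exist above any given element. Both follow from the Jordan--Dedekind property and finite length. It is worth noting in passing that the equivalence invokes no property of modular lattices beyond lower-semimodularity, which is consistent with the robustness of the independence axiom observed in Section~\ref{sec:llm}.
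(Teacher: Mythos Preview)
Your proof is correct and follows essentially the same route as the paper's: both directions are the straightforward augmentation argument, using maximality in $\ind^{I_1\lor I_2}$ for $(\Rightarrow)$ and a direct contradiction against maximality for $(\Leftarrow)$. The paper in fact proves this equivalence for arbitrary lattices (Theorem~\ref{thm:llm-height-ind}), so your closing remark that only lower-semimodularity is used can be strengthened further---neither direction needs any structural hypothesis beyond finite length.
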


We can weaken the independence axiom.
The ``localized'' augmentation (Figure~\ref{fig:local-augmentation}) is sufficiently strong to characterize the modular supematroids.
\begin{description}
    \item[(I2l)]  For all $I_1, I_2 \in \ind$ with $|I_1| + 1 = |I_2|$ and $I_2 \prec I_1 \lor I_2$, there exists $Z \le I_1 \lor I_2$ such that $Z \in \ind$ and $I_1 < Z$.
\end{description}

\begin{figure}[tb]
\centering
\begin{tikzpicture}[scale=0.8]
\node[draw, circle, label=west:{$I_1 \lor I_2$}] at (2,2) (I1I2) {};
\node[draw, circle, label=west:{$I_2$}] at (3,1) (I2) {};
\node[draw, circle, label=west:{$J$}] at (1,1) (J) {};
\node[draw, circle, label=west:{$I_1$}] at (0,0) (I1) {};
\draw[-] (I1)--(J);
\draw[-] (J)--(I1I2);
\draw[-] (I2)--(I1I2);
\end{tikzpicture}
\caption{Local Augmentation. Each line shows a cover relation.}
\label{fig:local-augmentation}
\end{figure}
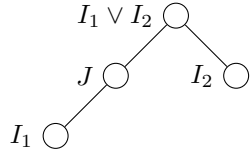

\begin{proposition}
\label{prop:localaug}
Under (I1), the two conditions (I2) and (I2l) are equivalent.
\end{proposition}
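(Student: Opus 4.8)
The implication (I2)$\Rightarrow$(I2l) is immediate: the hypotheses of (I2l) ($|I_1|+1=|I_2|$ and $I_2\prec I_1\lor I_2$) are a special case of those of (I2) ($|I_1|<|I_2|$), and the conclusion sought in (I2l), namely some $Z\in\ind$ with $I_1<Z\le I_1\lor I_2$, is exactly the conclusion of (I2). So the content lies entirely in (I2l)$\Rightarrow$(I2). Throughout that direction I would assume (I1) and (I2l), use freely that any element below a member of $\ind$ is again in $\ind$, and use the modular rank identity $|X\lor Y|+|X\land Y|=|X|+|Y|$ (a consequence of modularity via the Jordan--Dedekind property). Rather than proving (I2) directly, I would establish the stronger \emph{rotation} statement $(\star)$: for all $I_1,I_2\in\ind$ with $|I_1|\le|I_2|$ there is $I_2^{*}\in\ind$ with $I_1\le I_2^{*}\le I_1\lor I_2$ and $|I_2^{*}|=|I_2|$. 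Granting $(\star)$, the axiom (I2) follows at once, since when $|I_1|<|I_2|$ we have $I_1<I_2^{*}$, so any cover $J$ of $I_1$ with $I_1\prec J\le I_2^{*}$ lies in $\ind$ and satisfies $I_1<J\le I_2^{*}\le I_1\lor I_2$.

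I would prove $(\star)$ by induction on the \emph{defect} $d:=|I_1|-|I_1\land I_2|$, which by the rank identity equals $|I_1\lor I_2|-|I_2|$. If $d=0$ then $I_1\le I_2$ and $I_2^{*}=I_2$ works. For the step $d\ge 2$, pick $p$ with $I_1\land I_2\prec p\le I_1$; since $p\le I_1$ forces $p\land I_2=I_1\land I_2$, the pair $(p,I_2)$ has defect $1$, so the base case (below) applied to $(p,I_2)$ yields $\tilde I_2\in\ind$ with $p\le\tilde I_2\le p\lor I_2\le I_1\lor I_2$ and $|\tilde I_2|=|I_2|$. Then $I_1\land\tilde I_2\ge p$, so $(I_1,\tilde I_2)$ has defect at most $d-1$, and the induction hypothesis produces $I_2^{*}\le I_1\lor\tilde I_2\le I_1\lor I_2$ as required.

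The crux is the base case $d=1$, the only place (I2l) is invoked. Here $I_1\land I_2\prec I_1$ and $I_2\prec I_1\lor I_2$, and I would run a secondary induction on $s:=|I_2|-|I_1|$. The base $s=0$ gives $I_2^{*}=I_1$. For $s\ge 1$ choose $I_2^{\flat}$ with $I_1\land I_2\le I_2^{\flat}\le I_2$ and $|I_2^{\flat}|=|I_2|-1$; then $(I_1,I_2^{\flat})$ still has defect $1$ but smaller $s$, so the secondary hypothesis gives $A\in\ind$ with $I_1\le A\le I_1\lor I_2$ and $|A|=|I_2|-1$. Because $A\ge I_1$ we get $A\lor I_2=I_1\lor I_2$ and $|A|+1=|I_2|$, so $(A,I_2)$ meets the hypotheses of (I2l); applying it gives $Z\in\ind$ with $A<Z\le I_1\lor I_2$, and (replacing $Z$ by a coatom of $I_1\lor I_2$ lying above $A$ in the degenerate case $Z=I_1\lor I_2\in\ind$) we obtain $I_2^{*}:=Z\ge A\ge I_1$ of height exactly $|I_2|$.

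I expect the main obstacle to be exactly this base case: confirming at each turn that the modular rank identity forces both the cover relation $I_2\prec A\lor I_2$ and the height condition $|A|+1=|I_2|$ that are needed to legitimately apply (I2l), and disposing of the degenerate output $Z=I_1\lor I_2$. The surrounding bookkeeping — that the auxiliary elements $p$, $I_2^{\flat}$, $A$ and the final cover $J$ all exist and lie in $\ind$, and that the three inductions (main on $d$, secondary on $s$, and the final cover step) are non-circular — is routine given the Jordan--Dedekind property and the ideal hypothesis (I1).
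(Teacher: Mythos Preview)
Your argument is correct. Both you and the paper induct on the same quantity $d=k=|I_1\lor I_2|-|I_2|$, but the reductions inside the induction differ. The paper handles all $k\ge 1$ in one pass: it takes a coatom $K$ of $I_1\lor I_2$ containing $I_2$, uses lower semimodularity to obtain $I_1\land K\prec I_1$, applies the induction hypothesis twice to produce a chain $I_1\land K\prec K_1\prec K_2\le K$ in $\ind$, and then invokes upper semimodularity together with (I2l) on the pair $(I_1,K_2)$. You instead isolate $d=1$ as a separate base case (handled by your secondary induction on $s=|I_2|-|I_1|$, which is the only place (I2l) is invoked) and reduce $d\ge 2$ to it by climbing one cover up from $I_1\land I_2$ inside $I_1$. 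Your route is a little longer but delivers the stronger rotation statement $(\star)$ as a byproduct and packages modularity entirely through the rank identity; the paper's version is more compact but leaves more to the reader (e.g.\ why $I_1\land K_2=I_1\land K\prec I_1$, and exactly which pairs ``apply the induction hypothesis twice'' refers to).
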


\subsection{Rank Axiom}

We characterize modular supermatroids in terms of their rank function.
Let $r \colon \lat \to \mathbb{R}$.
Consider the following condition on $r$.
\begin{definition}[Rank Axiom]
\label{def:rank}
\begin{description}
    \item[(R1)] $r(\perp) = 0$;
    \item[(R2)] $r(X) - r(X') \in \{0,1\}$ for all $X \succ X'$;
    \item[(R3)] $r$ is a downward DR-submodular function.
\end{description}
\end{definition}

\begin{theorem}[Rank Axiom $\Leftrightarrow$ Independence Axiom]  
\label{thm:ind-rank-mod}
\begin{itemize}
    \item[(1)] Let $\ind$ be a modular supermatroid. Then, $r(X) := \min \{|I| \mid I \le X, I \in \ind\}$ satisfies the rank axiom. 
    \item[(2)] Conversely, let $r$ be a function over a modular lattice satisfying the rank axiom. Then, $\ind := \{X \in \lat \mid r(X) = |X|\}$ is a modular supermatroid.
\end{itemize}
\end{theorem}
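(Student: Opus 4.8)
The plan is to prove both directions through the independence axiom (I1)--(I2) rather than the height axiom directly, passing between the two via Theorem~\ref{thm:modular-ind-height}, and to lean on two standard facts about a modular lattice $\lat$ of finite length: the height function is a valuation, $|A| + |B| = |A \lor B| + |A \land B|$, and $A \land B \prec B$ implies $A \prec A \lor B$~\cite{birkhoff1940lattice}. I read the rank function as the common height of the maximal elements of $\ind^X$ guaranteed by (H2), i.e. $r(X) = \max\{|I| \mid I \in \ind,\ I \le X\}$. Monotonicity of $r$ and the bound $r(X) - r(X') \le |X| - |X'|$ (hence $r(W) \le |W|$ everywhere) follow by summing increments along a maximal chain under (R2), and I use them freely. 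For direction (1), (R1) is immediate, and (R2) for the supermatroid rank follows by taking a witness $I \le X$ of height $r(X)$, intersecting with the lower cover $X'$, and feeding $I \land X' \le X'$ into the valuation identity.

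The substantial point in direction (1) is downward DR-submodularity (R3). Fix $X \le Y$ and $b \in \adm{Y}$; since $Y \prec Y \lor b$ we have $r(Y \lor b) - r(Y) \in \{0,1\}$, and the value $0$ is vacuous (the right-hand side is nonnegative, so any $b'$ works). In the case $r(Y \lor b) = r(Y) + 1$ I would first choose maximal independent elements $I_X \le I_Y$ with $I_X \le X$ and $I_Y \le Y$, which is possible by growing a maximal $I_X \in \ind^X$ upward inside $\ind^Y$ through repeated augmentation (I2) without leaving $[I_X, Y]$. Because $r(Y \lor b) > |I_Y|$, augmentation produces $J \in \ind$ with $I_Y \prec J \le Y \lor b$; maximality of $I_Y$ in $\ind^Y$ forces $J \not\le Y$, hence $J \lor Y = Y \lor b$. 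Picking a join-irreducible $c \le J$ with $c \not\le I_Y$ gives $I_Y \lor c = J$, $c \in \adm{I_Y}$, and $c \lor Y = Y \lor b$, so $c = b \bmod Y$; I set $b' = c$. Then every $a \in \adm{X}$ with $a \le c$ satisfies $I_X \lor a \le J \in \ind$, so $I_X \lor a$ is independent, lies below $X \lor a$, and has height $\ge |I_X| + 1$ (as $a \not\le X \ge I_X$); thus $r(X \lor a) \ge r(X) + 1$, which is exactly the required inequality.

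For direction (2), (I1) is routine: $\bot \in \ind$ by (R1), and if $X \in \ind$ and $X' \le X$ then monotonicity with $r(X) - r(X') \le |X| - |X'|$ forces $r(X') = |X'|$, so $\ind$ is an ideal. The heart is (I2). Given $I_1, I_2 \in \ind$ with $|I_1| < |I_2|$, monotonicity gives $r(I_1 \lor I_2) \ge r(I_2) = |I_2| > |I_1| = r(I_1)$, so along any maximal chain $I_1 = Z_0 \prec \cdots \prec Z_m = I_1 \lor I_2$ there is a first index $j$ with $r(Z_{j+1}) = r(Z_j) + 1$; write $Z_{j+1} = Z_j \lor b$ with $b \in \adm{Z_j}$. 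Applying (R3) to $X = I_1 \le Y = Z_j$ and this $b$ yields $b' = b \bmod Z_j$ with $b' \le Z_{j+1} \le I_1 \lor I_2$ such that every $a \in \adm{I_1}$ with $a \le b'$ satisfies $r(I_1 \lor a) - r(I_1) \ge 1$; since $I_1 \prec I_1 \lor a$ this forces $r(I_1 \lor a) = |I_1 \lor a|$, so $I_1 \lor a \in \ind$ with $I_1 \prec I_1 \lor a \le I_1 \lor I_2$, the desired $J$.

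The crux, and the step I expect to be the main obstacle, is guaranteeing an admissible $a \le b'$ relative to $I_1$: the element $b'$ handed to us by (R3) need not itself be admissible for $I_1$, since its lower cover need not lie below $I_1$ — a phenomenon invisible in the atomic subspace lattice but genuine in a general modular lattice. I would isolate this as a lemma: if $V \not\le I_1$, then some join-irreducible $a \le V$ lies in $\adm{I_1}$. To prove it, take the second element $W_1$ of a maximal chain from $I_1 \land V$ to $V$; then $I_1 \land W_1 = I_1 \land V \prec W_1$ and $W_1 \not\le I_1$, whence $|I_1 \lor W_1| = |I_1| + 1$ by the valuation identity, so $I_1 \prec I_1 \lor W_1$. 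Any join-irreducible $a \le W_1$ with $a \not\le I_1$ then has $I_1 \lor a = I_1 \lor W_1$, so $|I_1 \lor a| = |I_1| + 1$, and the valuation identity gives $I_1 \land a \prec a$, i.e. $a \in \adm{I_1}$. Applying this with $V = b'$ closes the argument. The essential use of modularity is concentrated in this lemma and the valuation identity, and the directional freedom in choosing $b'$ is precisely what lets direction (1) proceed despite the same non-admissibility subtlety.
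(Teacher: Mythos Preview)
Your proof is correct and mirrors the paper's argument closely: direction~(1) grows a maximal independent $I_X$ up through $I_Y$ to $J$ by repeated augmentation and reads the witness $b'$ off $J$, exactly as the paper's chain $I_0 \preceq \cdots \preceq I_{k+1}$ does, and direction~(2) locates a unit increment along a maximal chain from $I_1$ to $I_1 \lor I_2$ and pulls it down via (R3), just as in the paper. The existence lemma you isolate---that some $a \in \adm{I_1}$ lies below any $V \not\le I_1$---is precisely the paper's Lemma~\ref{lem:subadm}; your valuation-identity proof of it is a clean alternative to the paper's descent construction, and your direct use of $I_X \lor a \le J$ in direction~(1) mildly streamlines the paper's final step, which instead descends once more to $\adm{I_X}$.
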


The condition (R3) appears to lack symmetry.
Thus, it would be natural to consider the upward and the bidirectional versions as follows:
\begin{description}
    \item[(R3u)] $r$ is a  upward DR-submodular function;
    \item[(R3b)] $r$ is a bidirectional DR-submodular function.
\end{description}

\begin{proposition}
\label{prop:rank-upward}
Let $\lat$ be a lattice and $r \colon \lat \to \mathbb{R}$ be a function that satisfies (R1) and (R2). 
Then, $r$ satisfies (R3) if and only if $r$ satisfies (R3u).
\end{proposition}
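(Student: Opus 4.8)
The plan is to bring both conditions into the single lattice $\lat$ and then match witnesses. First I would unwind (R3u). By definition $r$ is upward DR-submodular iff it is downward DR-submodular on $\lat^*$, so I would translate Definition~\ref{def:downwarddr} through the order-reversal: the join-irreducibles of $\lat^*$ are the meet-irreducibles of $\lat$, the admissible set in $\lat^*$ becomes the \emph{co-admissible} set $\dadm{X}$ (a meet-irreducible $b$ with $b \prec X \lor b$), the operation $\lor$ is replaced by $\land$, and the coset relation ``$b' = b \bmod X$ in $\lat^*$'' becomes $b' \land X = b \land X$. After renaming so that the smaller element is $X$, this shows that (R3u) reads: for all $X \le Y$ and all $b \in \dadm{X}$ there is $b'$ with $b' \land X = b \land X$ such that, for every $a \in \dadm{Y}$ with $b' \le a$,
\begin{align}
r(X \land b) - r(X) \le r(Y \land a) - r(Y).
\end{align}
I would stress at the outset that one \emph{cannot} deduce (R3u) from (R3) by the formal identity $(\lat^*)^* = \lat$, because neither (R1) (it would demand $r(\top)=0$) nor (R2) (increments along covers would lie in $\{0,-1\}$) survives the order-reversal; hence both implications must genuinely invoke the hypotheses.

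Second, I would record the consequences of (R1) and (R2). Summing increments along a maximal chain shows that $r$ is monotone nondecreasing with nonnegative integer marginals, and that every marginal decomposes as a telescoping sum of cover-increments, each lying in $\{0,1\}$. Consequently both directional inequalities are governed by the Boolean dependency predicate $\delta(X,e) :\Leftrightarrow r(X \lor e)=r(X)$ and its order-dual. Inserting a maximal chain between $X$ and $X \lor a$ (respectively between $Y \land b$ and $Y$) and telescoping lets me replace each side of the inequality by a count of ``rank-increasing'' covers along that chain, so the whole statement reduces to a statement about which covers carry marginal $1$.

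Third, the heart of the argument: I would show that (R3) is equivalent to a monotonicity of this dependency relation along joins --- informally, if adding $b$ fails to raise the rank over the larger $Y$, then some admissible $a$ below a suitable representative $b'$ fails to raise it over the smaller $X$ --- and that (R3u) is exactly the order-dual monotonicity along meets, and then prove the two are equivalent for $\{0,1\}$-increment functions. The main obstacle is the asymmetry of the two witness families: in (R3) the witness is a \emph{join}-irreducible $a \le b'$, whereas in (R3u) it is a \emph{meet}-irreducible $a \ge b'$, and in both cases the representative $b'$ is only asserted to exist within a coset. Producing, from a downward witness $(b',a)$, a matching upward witness (and conversely) without distributivity is the delicate step; I expect to carry it out by fixing a maximal chain through the relevant interval and tracking precisely which covers carry marginal $1$, using (R2) to guarantee that this $\{0,1\}$ pattern is transported correctly between the join-side and the meet-side descriptions. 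This cover-level bookkeeping, rather than any single inequality, is where the real work lies.
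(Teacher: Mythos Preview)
Your route is different from the paper's, and the obstacle you flag --- manufacturing a meet-irreducible witness above $b'$ from a join-irreducible one below it, without distributivity --- is exactly what the paper's argument avoids altogether. The paper never attempts a direct translation between (R3) and (R3u) at the level of witnesses or dependency predicates. Instead it passes through the supermatroid duality of Section~\ref{subsec:modulardual}: from (R1), (R2), (R3) one recovers a modular supermatroid via Theorem~\ref{thm:ind-rank-mod}, forms its dual on $\lat^*$, and invokes Proposition~\ref{prop:dualrank} to write the dual rank as $r^*(X) = r(X) + (|\top| - |X|) - r(\top)$. Theorem~\ref{thm:ind-rank-mod} applied to the dual gives that $r^*$ is downward DR-submodular on $\lat^*$; and since $r^* - r$ depends only on the height, its contribution cancels on both sides of the DR inequality, so $r$ itself is downward DR-submodular on $\lat^*$, which is (R3u). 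The converse is declared symmetric. Your observation that (R1) and (R2) do not survive order-reversal is exactly why the height correction $r \mapsto r^*$ is needed.

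The gap in your plan is at the step you yourself call delicate. ``Tracking which covers carry marginal $1$'' along a chain pins down the \emph{values} of $r$ on that chain, but Definition~\ref{def:downwarddr} demands a specific \emph{lattice element} as witness --- in (R3u), a meet-irreducible $a \in \dadm{Y}$ lying above some $b'$ in the right coset --- and nothing in the $\{0,1\}$ pattern of increments along a single chain tells you how to locate such an element. In a modular, non-distributive lattice the passage from join-irreducibles to meet-irreducibles is not governed by chain data alone. The paper's duality detour manufactures the witness structurally (it is supplied by the rank axiom for the dual supermatroid, whose existence is guaranteed by the self-duality of the base axiom), which is precisely the ingredient your direct approach is missing. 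If you want to salvage the direct argument, you would need an explicit construction of the dual witness, and your outline does not provide one.
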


In particular, (R1), (R2), and (R3b) are other axioms of a rank function.

\begin{remark}
\label{rem:distributive-DR}
Theorem~\ref{thm:ind-rank-mod} implies that the rank function of a supermatroid on a distributive lattice is a DR-submodular function. 
The downward DR-submodular function coincides with the DR-submodular function~\cite{nakashima2018subspace}.
\end{remark}
 
\begin{remark}
We remark that (R3) \emph{cannot} be replaced with ``$r$ is a lattice-submodular function.''

\begin{proposition}[The lattice submodularity is insufficient]
\label{prop:DR-submodular-is-needed}
There exists a modular lattice $\lat$ and a supermatroid $\ind$ on $\lat$ such that the rank function $r(X) = \max \{ |I| \mid I \in \ind, I \le X \}$ is not lattice-submodular.
\end{proposition}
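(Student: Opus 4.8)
The plan is to exhibit a minimal explicit counterexample on the smallest non-distributive modular lattice. I would take $\lat$ to be the five-element lattice with bottom $\bot$, top $\top$, and three pairwise-incomparable atoms $a, b, c$, each covering $\bot$ and covered by $\top$; this is the standard modular-but-not-distributive lattice (concretely, it is the vector lattice $\lat(\F_2^2)$, whose three join-irreducible elements are the three lines through the origin). It has finite length, is modular, and satisfies $|\bot| = 0$, $|a| = |b| = |c| = 1$, $|\top| = 2$. For the supermatroid I would take the ``single-atom'' ideal $\ind := \{\bot, a\}$, in which $b$ and $c$ play the role of loops.

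First I would check that $\ind$ satisfies the height axiom. Condition (H1) is immediate: $\ind$ is non-empty, and it is an ideal because the only element below $a$ is $\bot$. For (H2) I would run through every $X \in \lat$: for $X \in \{\bot, b, c\}$ the set $\{I \in \ind \mid I \le X\}$ equals $\{\bot\}$, since $a \not\le b$ and $a \not\le c$, so its unique maximal element is $\bot$ (height $0$); for $X \in \{a, \top\}$ the set equals $\{\bot, a\}$, whose unique maximal element is $a$ (height $1$). Since each $X$ yields a \emph{single} maximal element, the equal-height requirement (H2) holds trivially, and $\ind$ is a supermatroid of rank $1$.

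Next I would read off the rank function $r(X) = \max\{|I| \mid I \in \ind,\ I \le X\}$, obtaining $r(\bot) = r(b) = r(c) = 0$ and $r(a) = r(\top) = 1$. Taking $X = b$ and $Y = c$, so that $X \lor Y = \top$ and $X \land Y = \bot$, gives
\[
r(X \lor Y) + r(X \land Y) = r(\top) + r(\bot) = 1 > 0 = r(b) + r(c) = r(X) + r(Y),
\]
which violates lattice-submodularity $r(X \lor Y) + r(X \land Y) \le r(X) + r(Y)$, proving the claim.

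Finally I would add one sentence isolating the mechanism, as it clarifies why modularity rather than distributivity is the operative hypothesis: the violation is driven precisely by the non-distributive relation $a \le b \lor c$ together with $a \not\le b$ and $a \not\le c$ (equivalently $a \land (b \lor c) = a \neq \bot = (a \land b) \lor (a \land c)$), which lets the loop atoms $b, c$ span an element lying above the rank-one atom $a$; no such phenomenon occurs over a distributive lattice, consistent with Remark~\ref{rem:distributive-DR}. There is no serious technical obstacle here: verifying (H2) is the only step needing any care, and even that is immediate once one notices each $\{I \in \ind \mid I \le X\}$ has a single maximum. The genuine content is the choice of example, and I would note that the rank-one uniform matroid $\{\bot, a, b, c\}$ on the same lattice \emph{is} submodular, so the presence of loops (equivalently, exploiting the failure of distributivity) is truly necessary.
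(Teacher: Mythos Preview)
Your proof is correct and follows exactly the same approach as the paper: both use the diamond lattice $\{\bot,a,b,c,\top\}$ with $\ind=\{\bot,a\}$ and exhibit the failure of lattice-submodularity via $r(b)+r(c)=0<1=r(b\lor c)+r(b\land c)$. Your write-up simply fills in more of the routine verifications (checking (H1) and (H2) explicitly) and adds interpretive remarks, but the example and the violating pair are identical.
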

\begin{proof}
Let $\lat = \{\bot, a, b, c, \top\}$ be a diamond lattice and $\ind = \{ \bot, a \} \subseteq \lat$ as shown in Figure~\ref{fig:diamond}, where the elements in $\ind$ are indicated by the black points.
The subset $\ind$ is a supermatroid. 
However, the rank function does not satisfy the lattice submodularity because $r(b) + r(c) = 0 \not \ge r(b \lor c) + r(b \land c) = 1$.
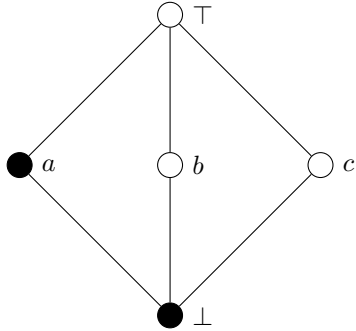
\begin{figure}[tb]
    \centering
    \begin{tikzpicture}[scale = 2]
    \node[draw,circle,fill,label=east:{$\bot$}] at (0,0) (o) {};
    \node[draw,circle,fill,label=east:{$a$}] at (-1,1) (a) {};
    \node[draw,circle,label=east:{$b$}] at (0,1) (b) {};
    \node[draw,circle,label=east:{$c$}] at (1,1) (c) {};
    \node[draw,circle,label=east:{$\top$}] at (0,2) (t) {};
    \draw[-] (o)->(a);
    \draw[-] (o)->(b);
    \draw[-] (o)->(c);
    \draw[-] (a)->(t);
    \draw[-] (b)->(t);
    \draw[-] (c)->(t);
    \end{tikzpicture}
    \caption{A supermatroid $\ind = \{\bot, a\}$ on a modular lattice (the diamond lattice) whose rank function does not satisfy the lattice-submodularity.}
    \label{fig:diamond}
\end{figure}
\end{proof}

Note that the rank function of a cg-matroid satisfies the submodularity~\cite{sano2008rank}.
Thus, the directional DR-submodularity is required to handle the modularity of the lattice.
\end{remark}
This result indicates the naturality of our directional DR-submodularities.
Directional DR-submodularities are originally introduced to characterize the maximization problems solved by a greedy algorithm over the lattices~\cite{nakashima2018subspace}.
Surprisingly, directional DR-submodularties are indespensable concepts in a different subject, i.e.,  modular supermatroids.

\subsection{Middle Axiom (Base Axiom)}
In this section, we derive a middle axiom for base, maximal elemets of $\ind$, of modular supermatroids.
Consider the following conditions on $\base \subseteq \lat$:
\begin{definition}[Middle Axiom] \ 
\begin{description}
\item[(B1)] All elements in $\mathcal{B}$ are pairwise incomparable, i.e., $B_1, B_2 \in \mathcal{B}$, $B_1 \le B_2$ implies $B_1 = B_2$.
\item[(B2)] For all $B_1, B_2 \in \mathcal{B}$, $X, Y \in \mathcal{L}$ such that $X \le B_1$ and $B_2 \le Y$, there exists $B \in \mathcal{B}$ such that $X \le B \le Y$.
\end{description}
\end{definition}

\begin{lemma}
\label{lem:base-sameheight}
Let $\mathcal{L}$ be a lower semimodular lattice.
Then, all elements in $\mathcal{B}$ have the same height.
\end{lemma}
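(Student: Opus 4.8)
The plan is to fix two bases $B_1, B_2 \in \mathcal{B}$ and show $|B_1| = |B_2|$; since the statement is symmetric and must hold for every pair, it suffices to transform one base into the other through a sequence of bases whose heights I can compare step by step. First I would record the tools supplied by lower semimodularity: the Jordan--Dedekind property (already noted in the preliminaries) makes $|\cdot|$ well defined, the height function is supermodular, $|B_1| + |B_2| \le |B_1 \lor B_2| + |B_1 \land B_2|$, and, dually to the upper semimodular covering law, meets preserve the covering relation, i.e. $a \prec b$ implies $a \land c \preceq b \land c$. Writing $m = B_1 \land B_2$ and $M = B_1 \lor B_2$, the comparable case is settled at once by (B1), so I may assume $m < B_1, B_2 < M$ and induct on the height $|M| - |m|$ of the interval $[m, M]$.

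The engine is a single-step exchange extracted from (B2). Specializing (B2) to the pair $(B_1, B_2)$ with $Y$ any element above $B_2$ and $X = Y \land B_1$ yields a base $B$ with $Y \land B_1 \le B \le Y$; I would take $Y = D$ to be a cover of $B_2$ lying under $M$, so $B_2 \prec D \le M$. Because distinct bases are incomparable, $D \notin \mathcal{B}$, hence $B < D$; and if $B \neq B_2$ then $B \lor B_2 = D$, whence lower semimodularity ($B_2 \prec B_2 \lor B$ forces $B \land B_2 \prec B$) controls the height of $B$. The meet-covering law applied to $B_2 \prec D$ shows $m = B_2 \land B_1 \preceq D \land B_1$, so the new lower endpoint has climbed toward $B_1$ by at most one cover; iterating along a maximal chain from $B_2$ up to $M$ in fact realizes the supermodular inequality above as a telescoping count, which is why I approach through meets rather than joins.

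In the inductive step I would use this exchange to replace $B_2$ by a base $B$ sitting one cover higher whose interval with $B_1$ is strictly smaller---either $|B_1 \lor B| < |M|$ or $|B_1 \land B| > |m|$---so that $|M| - |m|$ strictly decreases while $[B_1 \land B, B_1 \lor B] \subseteq [m, M]$. The induction hypothesis then gives $|B_1| = |B|$, and the covering relation tying $B$ to $B_2$ transfers this to $B_2$. The degenerate possibilities, namely that the exchange returns $B_2$ itself or that $D \land B_1 = m$ so that no progress is visible at the lower endpoint, must be broken either by choosing $D$ differently or by running the dual exchange from below, taking $X \le B_1$ and $Y = X \lor B_2$ to obtain a base in $[X, X \lor B_2]$.

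The hard part is pinning the exchanged base to \emph{exactly} the same height as $B_2$ using only lower semimodularity. In a modular lattice the rank identity $|B| + |B_2| = |B \lor B_2| + |B \land B_2|$ forces $|B| = |B_2|$ immediately; with merely the one-sided supermodular inequality I instead have to argue that the particular base returned by (B2) is genuinely covered by $D$ (equivalently that $B \land B_2 \prec B_2$), which is what the covering law converts into $|B| = |B_2|$. Guaranteeing this covering while simultaneously guaranteeing strict shrinkage of $[m, M]$ and remaining inside it is the delicate point, and it is precisely where the lower semimodular hypothesis---rather than the stronger modularity assumed elsewhere in the section---enters, through its meet form.
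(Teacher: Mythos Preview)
Your overall plan---induction combined with a single application of (B2) and lower semimodularity---matches the paper's, but the specific exchange you set up leaves exactly the two points you yourself flag as ``delicate'' unresolved, and they do not close.  Taking $D$ with $B_2 \prec D \le M$ and the resulting base $B \le D$, you obtain from lower semimodularity only $B \land B_2 \prec B$; you never get $B \prec D$ (equivalently $B \land B_2 \prec B_2$), so you cannot conclude $|B| = |B_2|$.  Likewise, $B_1 \land B = B_1 \land D$ may equal $m$ while $B_1 \lor B$ may still equal $M$, so the interval need not shrink.  Neither ``choosing $D$ differently'' nor the ``dual exchange from below'' is spelled out, and in a bare lower semimodular lattice there is no evident way to force either conclusion.

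The paper avoids both difficulties by running the exchange from the other end.  Assume $|B_1| \le |B_2|$, pick $Z$ with $B_1 \le Z \prec M$, and apply (B2) with $X = B_2 \land Z$ and $Y = Z$ to get a base $B$ with $B_2 \land Z \le B \le Z$.  Now $B_1 \lor B \le Z$, so the induction parameter $|B_1 \lor B| - |B_1|$ drops automatically---no case analysis needed.  Crucially, the paper never tries to prove $|B| = |B_2|$; it only needs $|B| \ge |B_2|$, which follows because $B_2 \land Z \prec B_2$ (lower semimodularity from $Z \prec Z \lor B_2$) and $B > B_2 \land Z$ (else $B \le B_2$, violating (B1)).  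Induction gives $|B| = |B_1|$, hence $|B_1| \ge |B_2|$, and the assumed inequality closes the loop.  The asymmetry you were fighting is absorbed into the WLOG ordering of $|B_1|$ and $|B_2|$ rather than into a two-sided height pinning.
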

\begin{theorem}[Middle Axiom $\Leftrightarrow$ Independence Axiom]
\label{thm:mid}
\begin{itemize}
    \item[(1)] Let $\base$ be the family of  maximal elements of a modular supermatroid $\ind$. Then, $\base$ satisfies (B1), (B2), and (B3).
    \item[(2)] Conversely, let $\base \subseteq \lat$ satisfy (B1), (B2), and (B3). Let $\ind := \{ X \in \lat \mid \exists B \in \base \; X \le B\}$.
    Then, $\ind$ satisfies (I1) and (I2) if the lattice is lower semimodular.
\end{itemize}
\end{theorem}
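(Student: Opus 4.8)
The plan is to prove the two directions of Theorem~\ref{thm:mid} separately, noting that the content is an equivalence between the Middle Axiom on the base family and the Independence Axiom on the induced ideal. Since the statement labels the conditions (B1), (B2), (B3) but the definition only gives (B1) and (B2), I will read (B1) as the incomparability condition, (B2) as the interpolation (middle) condition, and (B3) as the same-height condition supplied by Lemma~\ref{lem:base-sameheight}; these together form the natural ``base axiom.'' For direction (1), I would start from a modular supermatroid $\ind$ and let $\base$ be its maximal elements. Incomparability (B1) is immediate from the definition of maximality. The same-height condition (B3) follows directly from the height axiom (H2) applied with $X = \top$, or equivalently from Lemma~\ref{lem:base-sameheight} once I verify $\base$ sits inside a lower semimodular lattice (a modular lattice is lower semimodular). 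The substantive part is the interpolation property (B2).

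\emph{Proof of (1), condition (B2).} Given $B_1, B_2 \in \base$ and $X, Y \in \lat$ with $X \le B_1$ and $B_2 \le Y$, I must produce $B \in \base$ with $X \le B \le Y$. The idea is to use the augmentation axiom (I2), which I may invoke because Theorem~\ref{thm:modular-ind-height} guarantees $\ind$ satisfies (I1) and (I2). Since $X \le B_1$ and $\ind$ is an ideal, $X \in \ind$; likewise $B_2 \in \ind$ with $B_2 \le Y$. I would then work inside the interval below $Y$: among elements of $\ind$ lying below $Y$ and above $X$, take a maximal one, call it $B$. I claim $B$ is a base. If not, $|B|$ is strictly less than the rank, and since $B_2 \le Y$ is a base with $|B_2|$ equal to the rank, we have $|B| < |B_2|$; applying (I2) to $B$ and $B_2$ yields $J \in \ind$ with $B < J \le B \lor B_2 \le Y$, contradicting maximality of $B$ below $Y$. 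Here I would lean on modularity to keep $B \lor B_2 \le Y$, which holds because both $B \le Y$ and $B_2 \le Y$. Thus $B \in \base$ and $X \le B \le Y$, establishing (B2).

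\emph{Proof of (2).} Conversely, assume $\base$ satisfies (B1), (B2), (B3), set $\ind := \{ X \mid \exists B \in \base,\, X \le B\}$, and assume $\lat$ is lower semimodular. Condition (I1) is straightforward: $\ind$ is downward closed by construction, and non-empty since $\base$ is non-empty (implicit in (B2)). For (I2), take $I_1, I_2 \in \ind$ with $|I_1| < |I_2|$; I want $J \in \ind$ with $I_1 < J \le I_1 \lor I_2$. By definition pick bases $B_1 \ge I_1$ and $B_2 \ge I_2$. Applying the middle axiom (B2) with $X = I_1 \le B_1$ and $Y = I_1 \lor I_2 \ge I_2$—after first producing a base below $I_1 \lor I_2$ via (B2) applied to $B_2 \ge I_2$ and the chain $I_2 \le I_1 \lor I_2$—I would obtain a base $B$ with $I_1 \le B \le I_1 \lor I_2$. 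The key remaining point is to argue $I_1 \neq B$, i.e., the inclusion is strict. If $B = I_1$ then $I_1 \in \base$, but $I_1 < I_1 \lor I_2$ would force, by (B1) together with (B3) (all bases have the same height), a height comparison $|I_1| = |B| \ge |I_2| > |I_1|$, a contradiction; so $I_1 < B$, and $J := B$ works.

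The main obstacle I anticipate is the careful use of the middle axiom (B2) in direction (2) to land a base exactly inside the interval $[I_1, I_1 \lor I_2]$: (B2) interpolates a base between a sub-element of one base and a super-element of another, so I must set up the two reference bases and the bounds $X, Y$ so that the interpolated base is simultaneously $\ge I_1$ and $\le I_1 \lor I_2$, and this may require applying (B2) in two stages (first to pull a base beneath $I_1 \lor I_2$, then to interpolate above $I_1$). Establishing strictness $I_1 < B$ hinges on the equal-height property (B3), which is exactly where lower semimodularity—via Lemma~\ref{lem:base-sameheight} and the Jordan--Dedekind property—enters, explaining the hypothesis in part (2). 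I expect direction (1) to be comparatively routine once (I2) is available, with the verification that $B \lor B_2 \le Y$ being the only place modularity is tacitly used.
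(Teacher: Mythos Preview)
Your argument for direction~(1) is correct and matches the paper's: start from $X\in\ind$ and repeatedly augment toward $B_2$ using (I2), staying below $Y$ because $B\lor B_2\le Y$. One small remark: that last inequality is a lattice-theoretic triviality (both $B$ and $B_2$ are $\le Y$); no modularity is needed there, so your aside about ``leaning on modularity'' is misplaced.

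Direction~(2) has a real gap. To invoke (B2) with upper bound $Y=I_1\lor I_2$ you must already have \emph{some} base $B'\le I_1\lor I_2$, and your ``first stage'' does not produce one: applying (B2) to ``$B_2\ge I_2$ and the chain $I_2\le I_1\lor I_2$'' still requires a base below $I_1\lor I_2$ as input, which is exactly what you are trying to find. The paper avoids this circularity by not proving (I2) directly. Instead it proves the \emph{weak} independence axiom (I2w), where $I_2$ is assumed to be maximal in~$\ind$; then $I_2\in\base$ and $I_2\le I_1\lor I_2$ supplies the missing base, so (B2) applies immediately with $X=I_1\le B_1$ and $Y=I_1\lor I_2\ge I_2$. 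The strictness $I_1<B$ then follows from Lemma~\ref{lem:base-sameheight} exactly as you wrote. The passage from (I2w) to the full (I2) is a separate result (Theorem~\ref{thm:weak-USM}, which uses the modularity of the lattice), and this is where the structural hypothesis on $\lat$ actually enters for part~(2)---not merely in the equal-height lemma, as you anticipated.
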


\subsection{Dependence Axiom}
In this section, we introduce an axiom for $\dep := \lat \setminus \ind$.
This is a substitute for the standard axiom for \emph{circuits} (minimal elements of $\dep$).
Introducing an axiom for the circuits of the modular supermatroids is an unsolved problem raised in \cite{li2014base}.
Here, we only consider $\dep$ because the axiom for the circuits of modular supermatroids is not simple and has almost the same form as that of $\dep$.
Consider the following conditions for $\dep \subseteq \lat$.
\begin{definition}[Dependence Axiom]
\begin{description}
\item[(D1)] $\mathcal{D}$ and is a proper filter.
\item[(D2)] For all $D_1, D_2 \in \dep$ and $Z \prec D_1 \lor D_2$ such that $D_i \prec D_1 \lor D_2$, $Z \neq D_i$ ($i = 1,2$), and $D_1 \land D_2 \not \le Z$, we have at least one of (1) $Z \in \dep$, (2) $D_1 \land D_2 \in \dep$, or (3) there exists $I \not \in \dep$ such that $D_1 \land D_2 \prec I \prec D_1 \lor D_2$.
\item[(D3)] Let $X, W \in \lat$ with $W \prec X \lor W$.
If there uniquely exists $Y$ such that $Y \in \dep$ and $X \prec Y \prec X \lor W$, then we have $X \in \dep$ or $W \in \dep$.
\end{description}
\end{definition}
We note that (D2) is called the elimination axiom and (D3) the replacement axiom in~\cite{barnabei1998matroids}.

\begin{theorem}[Dependence Axiom $\Leftrightarrow$ Independence Axiom]
\label{thm:mod-dep-ind}
\begin{itemize}
    \item[(1)] Let $\dep$ be a complement of independent sets of a modular supermatroid. Then, $\dep$ satisfies (D1), (D2), and (D3).
    \item[(2)] Conversely, let $\dep \subset \lat$ satisfying (D1), (D2), and (D3). Then, $\ind := \lat \setminus \dep$ satisfies (I1) and (I2).
\end{itemize}
\end{theorem}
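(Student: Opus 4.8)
The plan is to prove both implications through the ideal/filter duality together with the height function, reducing the hard direction to the local augmentation property (I2l) via Proposition~\ref{prop:localaug}. Throughout I would use that a modular lattice is lower semimodular, so that $D_i \prec D_1 \lor D_2$ forces $D_1 \land D_2 \prec D_j$, and that the rank $r(X) = \max\{|I| : I \in \ind,\ I \le X\}$ of a modular supermatroid is well defined with $r(X) < |X|$ exactly when $X \in \dep$ (by Theorem~\ref{thm:ind-rank-mod}). The filter condition (D1) is then immediate: $\ind$ is a non-empty ideal by (I1), so its complement $\dep$ is a filter, and it is proper because $\bot \in \ind$.

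For part (1), I would establish the elimination axiom (D2) by contradiction against the stated disjunction. Assume $Z \in \ind$, $D_1 \land D_2 \in \ind$, and that no independent $I$ satisfies $D_1 \land D_2 \prec I \prec D_1 \lor D_2$. Since $D_1 \in \dep$ and $\dep$ is a filter, $D_1 \lor D_2 \in \dep$, so $r(D_1 \lor D_2) \le |D_1 \lor D_2| - 1$; but the given $Z \prec D_1 \lor D_2$ with $Z \in \ind$ yields $r(D_1 \lor D_2) \ge |Z| = |D_1 \lor D_2| - 1$, whence equality. Lower semimodularity gives $|D_1 \land D_2| = |D_1 \lor D_2| - 2 = r(D_1 \lor D_2) - 1$, so $D_1 \land D_2$ is not maximal in $\ind^{D_1 \lor D_2}$. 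Choosing $I^{*} \in \ind$ maximal with $D_1 \land D_2 \le I^{*} \le D_1 \lor D_2$, maximality (anything above $I^{*}$ already lies above $D_1 \land D_2$) forces $|I^{*}| = r(D_1 \lor D_2) = |D_1 \lor D_2| - 1$, so $D_1 \land D_2 \prec I^{*} \prec D_1 \lor D_2$ with $I^{*} \in \ind$, the desired contradiction. The replacement axiom (D3) is short in contrapositive form: assuming $X, W \in \ind$ and that $[X, X \lor W]$ carries a unique middle $Y$, a middle forces $|W| = |X| + 1 > |X|$, so (I2) produces $J \in \ind$ with $X < J \le X \lor W$; a minimal such $J$ is a middle, hence equals $Y$, so $Y \in \ind$ and $Y$ cannot be the (unique, dependent) middle demanded by (D3).

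For part (2), (I1) is dual to (D1): $\ind = \lat \setminus \dep$ is a non-empty ideal since $\dep$ is a proper filter. By Proposition~\ref{prop:localaug} it suffices to verify (I2l), so I would fix $I_1, I_2 \in \ind$ with $|I_2| = |I_1| + 1$ and $I_2 \prec I_1 \lor I_2$; then $[I_1, I_1 \lor I_2]$ is a modular interval of height two and I must exhibit an independent middle. Suppose every middle is dependent. If the middle is unique, (D3) with $X = I_1$, $W = I_2$ forces $I_1 \in \dep$ or $I_2 \in \dep$, contradicting $I_1, I_2 \in \ind$. If there are two middles $D_1, D_2$, then $D_1 \land D_2 = I_1$ and $D_1 \lor D_2 = I_1 \lor I_2$, and I apply (D2) with $Z = I_2$: the side conditions hold because $D_i \ge I_1$ while $I_2 \not\ge I_1$ (as $|I_1 \land I_2| < |I_1|$), giving $Z \ne D_i$ and $D_1 \land D_2 = I_1 \not\le I_2$; since $Z = I_2 \in \ind$ and $D_1 \land D_2 = I_1 \in \ind$, elimination yields independent $I$ with $I_1 \prec I \prec I_1 \lor I_2$, again a contradiction. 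Hence some middle $J$ is independent with $I_1 < J \le I_1 \lor I_2$, which is exactly (I2l).

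The main obstacle I anticipate is the elimination step (D2) of part (1): the argument hinges on pinning down $r(D_1 \lor D_2) = |D_1 \lor D_2| - 1$ and on checking that the augmented element $I^{*}$ is simultaneously a cover of $D_1 \land D_2$ and covered by $D_1 \lor D_2$, which is where the lower semimodular height bookkeeping must be handled carefully. The matching case split in part (2)—verifying that the chain case ($M_1$) is precisely what (D3) dispatches while the branching case ($M_k$, $k \ge 2$) is what (D2) dispatches, and that the meet/join of two dependent middles are $I_1$ and $I_1 \lor I_2$—is the other place where one must be precise.
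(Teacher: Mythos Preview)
Your proposal is correct and follows essentially the same approach as the paper: both directions hinge on reducing part~(2) to (I2l) via Proposition~\ref{prop:localaug} and then splitting on whether the height-two interval $[I_1,I_1\lor I_2]$ has a unique middle (dispatched by (D3)) or at least two (dispatched by (D2) with $Z=I_2$), while part~(1) is handled by augmentation. The only cosmetic difference is that for (D2) in part~(1) the paper applies (I2) directly to the pair $(D_1\land D_2,\,Z)$ to produce the independent middle, whereas you route through the rank function and the height axiom to reach the same conclusion.
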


\subsection{Strong Basis Exchange Theorem}

We prove the strong exchange property of modular supermatroids.

\begin{remark}
As a difficulty in generalizing the strong exchange property to modular supermatroids, we cannot uniquely define the ``minus'' operation $X - a := X \setminus \{a\}$ for a set $X$ and an element $a \in X$ for modular lattices, which is used in the statement of the usual strong exchange property.
More precisely, there might be multiple values of $\mathring X \prec X$ such that $\mathring X \lor a = X$ for $\mathring X \prec X$ and $a \in \adm{\mathring{X}}$.
Let us consider a vector lattice $\lat(\mathbb{R}^2)$.
Let $a = \mathrm{span}((1,0)^\top)$ and $X = \top$.
In this setting, all one-dimensional spaces of the form $\mathring X=\mathrm{span}(v)$ for $v \neq (1,0)^\top$ satisfy $X := \top = \mathring X \lor a$.

We overcome this difficulty by formulating a strong exchange property through Theorem~\ref{thm:strongexchange}.
The formulation is natural in that it implies a usual strong exchange property for Boolean lattices   (Remark~\ref{rem:strong-exchange-set}) and the base axiom (Proposition~\ref{prop:strongex-to-base}).
In addition, it can be applied to an optimization (Sections~\ref{subsubsec:valuated-supermatroid}~and ~\ref{subsubsec:DR-submodular-opt}).
\end{remark}

\begin{theorem}[Strong Exchange Property]
\label{thm:strongexchange}
Let $X, Y \in \base$ and $\mathring{X} \prec X$ such that  $\mathring{X} \ge X \land Y$.
Then, a $w \in \adm{\mathring{X}}$ with $\mathring{X} \lor w = X$ that satisfies the following exists.
For all $\underline w \le w$ with $\underline w \in \adm{Y}$, there also exists $Y' \in \base$ such that $(X \land Y) \lor \underline w \le Y' \prec Y \lor \underline w$.
In addition, for all  $y \in \adm{Y'}$ with $Y' \lor y = Y \lor \underline w$, there exist join-irreducible elements $\underline y \le y$ and $v$ such that $\underline y = v \bmod Y \land Y'$ and $\mathring{X} \lor \underline v \in \base$ for all $\underline v \in \adm{\mathring{X}}$ with $\underline v \le v$.
Here, $\underline v \in \adm{\mathring{X}}$ with $\underline v \le v$ is non-empty.
\end{theorem}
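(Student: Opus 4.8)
The plan is to prove the statement in the nested order of its quantifiers, separating the \emph{forward} exchange (producing $w$ and $Y'$) from the \emph{backward} exchange (producing $v$ and $\underline y$), and to reserve the modular law together with the directional DR-submodularity of the rank function for the final, delicate step. Throughout, let $n$ denote the common height of all bases (Lemma~\ref{lem:base-sameheight}).

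First I would record the height bookkeeping. Since $\mathring{X} \le X$ and $X \land Y \le \mathring{X}$, one gets $\mathring{X} \land Y = X \land Y$; writing $d := |X| - |X \land Y| \ge 1$ and using the modular identity $|A \lor B| + |A \land B| = |A| + |B|$, this gives $|\mathring{X} \lor Y| = n + d - 1 = |X \lor Y| - 1$, so by the Jordan--Dedekind property $\mathring{X} \lor Y \prec X \lor Y$. Choosing any join-irreducible $w \le X$ with $w \not\le \mathring{X}$ yields $\mathring{X} \lor w = X$ (from the cover $\mathring{X} \prec X$) and $w \in \adm{\mathring{X}}$ (lower semimodularity applied to $\mathring{X} \prec \mathring{X} \lor w$); moreover $w \not\le \mathring{X} \lor Y$, since $X = \mathring{X} \lor w \le \mathring{X} \lor Y$ would force $X \lor Y = \mathring{X} \lor Y$, contradicting the heights. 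This is the required $w$. Now fix $\underline w \le w$ with $\underline w \in \adm{Y}$. Then $|Y \lor \underline w| = |Y| + 1$, whence $Y \prec Y \lor \underline w$, and $(X \land Y) \lor \underline w \le X$ because $\underline w \le w \le X$. Applying the sandwich property of $\base$ (Theorem~\ref{thm:mid}) to the bases $X, Y$ with lower bound $(X \land Y) \lor \underline w \le X$ and upper bound $Y \le Y \lor \underline w$ produces a base $Y'$ with $(X \land Y) \lor \underline w \le Y' \le Y \lor \underline w$; since $|Y'| = n < n+1 = |Y \lor \underline w|$, in fact $Y' \prec Y \lor \underline w$, settling the outer existence claim.

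For the innermost claim I would first set up the local modular picture: from $Y, Y' \prec Y \lor \underline w$, lower semimodularity gives $Y \land Y' \prec Y$ and $Y \land Y' \prec Y'$, and the interval $[Y \land Y', Y \lor \underline w]$ is a modular lattice of length two with $Y \lor Y' = Y \lor \underline w$. Given $y \in \adm{Y'}$ with $Y' \lor y = Y \lor \underline w$, I would transport $y$ to the $X$-side by augmenting $\mathring{X}$ (independent, height $n-1$) toward $Y'$ via the augmentation axiom (Theorem~\ref{thm:modular-ind-height}), reaching a base $\mathring{X} \prec \mathring{X} \lor v$ with $v \in \adm{\mathring{X}}$ join-irreducible, where the augmentation is steered in the direction dictated by $y$. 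The modular transpose isomorphisms relating the perspective intervals around $Y \land Y'$ then let me match $v$ with a sub-join-irreducible $\underline y \le y$ through the computation $v \lor (Y \land Y') = \underline y \lor (Y \land Y')$, i.e.\ $\underline y = v \bmod Y \land Y'$. Finally, to obtain the base property for \emph{every} admissible $\underline v \le v$ at once, I would invoke the downward DR-submodularity of the rank function $r$ (Theorem~\ref{thm:ind-rank-mod}, with the upward form of Proposition~\ref{prop:rank-upward} where convenient): its directional guarantee supplies exactly the distinguished sub-element below which every admissible marginal is controlled, forcing $r(\mathring{X} \lor \underline v) = n$ and hence $\mathring{X} \lor \underline v \in \base$. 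Nonemptiness of $\{\underline v \le v : \underline v \in \adm{\mathring{X}}\}$ is immediate, as $v$ itself qualifies.

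The main obstacle is precisely this last step. Because the ``minus'' operation is many-valued in a modular lattice (the difficulty emphasized just before the theorem), the correspondence $\underline y = v \bmod Y \land Y'$ must be chosen so that the conclusion holds uniformly over all admissible $\underline v \le v$ \emph{and} all admissible $y$ with $Y' \lor y = Y \lor \underline w$; reconciling these universal quantifiers is where the modular perspectivity isomorphism and the directional (rather than merely lattice-) DR-submodularity of $r$ must be combined. I expect the bulk of the work to concentrate here, and anticipate an induction on $d$ that reduces the general configuration to the collapsed case $d = 1$, in which $\mathring{X} = X \land Y = Y \land Y'$ and the two exchanges become a single cover step.
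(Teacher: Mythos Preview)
Your forward-exchange paragraph is largely fine, but the choice of $w$ is the crux and you treat it as free. In the paper's proof $w$ is \emph{not} arbitrary: one first builds an auxiliary element $H \ge \mathring{X}$ by successively adjoining $y \le Y$ with $r(H \lor y) = r(H)$ until maximal, obtaining $H = \mathring{X} \lor (Y \land H)$ with the property that every admissible $y \le Y$ now raises the rank at $H$. Then Lemma~\ref{lem:strengthend-downward-DR} (a strengthening of downward DR-submodularity, proved via the Herrmann--Pickering--Roddy representation and the regularity axiom of projective ordered spaces) is applied with the decomposition $H = \mathring{X} \lor (Y \land H)$ to select a specific $w$ for which $(Y \land H) \lor \underline w$ is independent for \emph{every} admissible $\underline w \le w$. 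This is strictly stronger than the sandwich bound $(X \land Y)\lor \underline w$ you obtain, and it is exactly what forces $Y \land H \le Y \land Y'$ for the $Y'$ you pick, which is then used to transfer the $\bmod$-relation in the last step. With an arbitrary $w$ you can certainly produce \emph{some} $Y'$ via the middle axiom, but you have no control ensuring that the ``for all $y$'' clause can be verified for that $Y'$.

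The backward exchange is where your plan becomes a sketch rather than a proof. ``Augmenting $\mathring{X}$ toward $Y'$ and steering in the direction dictated by $y$'' does not explain how the universal quantifier over $y$ is discharged, nor how one obtains a single $v$ below which \emph{every} admissible $\underline v$ gives a base. The paper handles this by a second application of Lemma~\ref{lem:strengthend-downward-DR}, now to $H = (Y \land H) \lor \mathring{X}$: for any $\underline y \le y$ admissible at $H$ one gets $v$ with $\underline y = v \bmod (Y \land H)$ and $r(\mathring{X}\lor\underline v)=r(\mathring{X})+1$ uniformly in $\underline v$; the inclusion $Y\land H \le Y\land Y'$ then upgrades the modulus to $Y\land Y'$. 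Your appeal to ``modular perspectivity isomorphisms'' and a hoped-for induction on $d$ does not substitute for this: there is no induction in the paper's argument, and the collapse to $d=1$ you envision does not obviously carry the needed uniformity in $\underline v$. The missing idea is precisely the $H$-construction together with Lemma~\ref{lem:strengthend-downward-DR}; without it, both halves of your argument have a genuine gap.
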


Figure~\ref{fig:symmetric-exchange} shows the relationship among the elements appearing in this theorem.

\begin{figure}[t]
        \begin{minipage}{0.5\hsize}
        \centering
            \begin{tikzpicture}[scale = 1]
                \node[draw,circle, fill, label=north:{$X$}, inner sep=0pt,minimum size=2mm]  at (0,0) (X) {};
                \node[draw,circle,label=east:{$\mathring{X}$}, inner sep=0pt,minimum size=2mm] at (1,-1) (intX) {};
                \node[draw,circle, fill,label=north:{$\mathring{X} \lor \underline v$}, inner sep=0pt,minimum size=2mm] at (2,0) (xv) {};
                \node[draw,circle,fill,label=east:{$Y$}, inner sep=0pt,minimum size=2mm] at (3,0) (Y) {};
                \node[draw,circle,label=north:{$Y \lor \underline w$}, inner sep=0pt,minimum size=2mm] at (4,1) (yw) {};
                \node[draw,circle, fill, label=east:{$Y' $}, inner sep=0pt,minimum size=2mm] at (5,0) (yprime) {};
                \draw[-] (X)->(intX) node[pos = 0.35, right] {$+ w$};
                \draw[-] (intX)->(xv);
                \draw[-] (Y)->(yw);
                \draw[-] (yw)->(yprime) node[pos = 0.35, right] {$+y$};
            \end{tikzpicture}
    \end{minipage}
    \begin{minipage}{0.5\hsize}
        \centering
        \begin{tikzpicture}[scale = 1]        
        \node[draw,circle, label=east:{$w$}, inner sep=0pt,minimum size=2mm]  at (0,1) (w) {};
        \node[draw,circle,  label=east:{$\underline w$}, inner sep=0pt,minimum size=2mm]  at (0, -1) (uw) {};
        \node[draw,circle, label=west:{$y$}, inner sep=0pt,minimum size=2mm]  at (2,1) (y) {};
        \node[draw,circle, label=west:{$\underline y$}, inner sep=0pt,minimum size=2mm]  at (2,0) (uy) {};
        \node[draw,circle, label=east:{$v$}, inner sep=0pt,minimum size=2mm]  at (3,0) (v) {};
        \node[draw,circle,  label=east:{$\underline v$}, inner sep=0pt,minimum size=2mm]  at (3,-1) (uv) {};
        \draw[-] (w)->(uw) ;
        \draw[-] (y)->(uy) ;
        \draw[dotted] (uy)->(v);
        \draw[-] (v)->(uv);
        \end{tikzpicture}
    \end{minipage}
    \caption{Relationship of lattice elements appearing in Theorem~\ref{thm:strongexchange}. Base elements are indicated by black dots.}
    \label{fig:symmetric-exchange}
\end{figure}
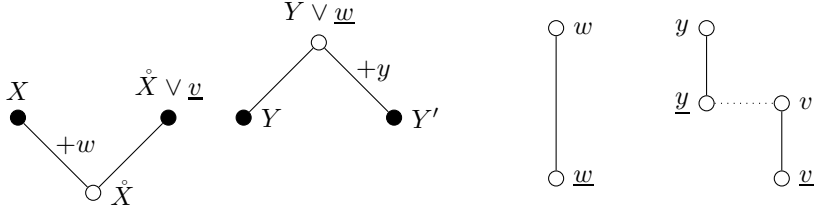

We can use a strong exchange property as the base Axiom:
\begin{proposition}
\label{prop:strongex-to-base}
The strong exchange property implies (B3).
\end{proposition}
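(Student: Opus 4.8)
The plan is to deduce the middle property (B3) from a single augmentation step and to realize that step through one application of the strong exchange property (Theorem~\ref{thm:strongexchange}). I work in the operative regime $X \le Y$ of (B3) (the case relevant to the middle axiom, as in the proof of Theorem~\ref{thm:mid}; note that the very conclusion $X \le B \le Y$ forces $X \le Y$). Since $B_2 \le Y$, the set of bases lying below $Y$ is non-empty, and by Lemma~\ref{lem:base-sameheight} all bases share a common height. First I would fix a base $B \le Y$ that maximizes the height $|X \land B|$, and then argue that this $B$ automatically satisfies $X \le B$, so that $X \le B \le Y$ exhibits the required element of $\base$. Equivalently, the reduction can be run as an induction on $|X| - |X \land B_2|$, each step replacing the current base by one capturing more of $X$.

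The core is to show that $X \not\le B$ contradicts maximality. Assume $X \not\le B$, so $X \land B < X \le B_1$, and pick a cover $X \land B \prec X' \le X$. I would apply Theorem~\ref{thm:strongexchange} to the pair of bases $B_1$ (as the first base, since $X \le B_1$) and $B$ (as the second base). For this I must select a lower cover $\mathring{B_1} \prec B_1$ with $\mathring{B_1} \ge B_1 \land B$ and $X \not\le \mathring{B_1}$; here modularity is essential, since $X \lor \mathring{B_1} = B_1$ yields the transposition $[\mathring{B_1} \land X, X] \cong [\mathring{B_1}, B_1]$, a perspectivity that matches the height-one step $\mathring{B_1} \prec B_1$ to a height-one step inside $X$. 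Feeding a refinement $\underline w \le w$ with $\underline w \le X$ and $\underline w \in \adm{B}$ into the theorem produces a base $B'$ with $(B_1 \land B) \lor \underline w \le B' \prec B \lor \underline w$.

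It then remains to verify the two inequalities that contradict maximality. Because $\underline w \le X \le Y$ and $B \le Y$, we have $B \lor \underline w \le Y$, so $B' \prec B \lor \underline w \le Y$ gives $B' \le Y$. Because $X \land B \le B_1 \land B \le B'$, we get $X \land B \le X \land B'$; and since $\underline w \le X$ and $\underline w \le B'$ but $\underline w \not\le B$, we have $\underline w \le X \land B'$ with $\underline w \not\le X \land B$, whence $|X \land B| < |X \land B'|$. This contradicts the choice of $B$, proving $X \le B$ and hence (B3).

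The main obstacle is the middle step. Theorem~\ref{thm:strongexchange} hands us the moved join-irreducible $w$ only existentially, letting us choose solely the lower cover $\mathring{B_1}$ and the refinement $\underline w \le w$; forcing the moved element to be a piece of $X$ — equivalently, producing a common refinement $\underline w \le w \land X$ that is admissible to $B$ — is exactly where the modular structure must be exploited carefully through the perspectivity of covering intervals. In the absence of complementation a lower cover $\mathring{B_1}$ avoiding a prescribed atom of $[B_1 \land B, B_1]$ need not exist, so the delicate point is to argue that the transposition above nonetheless delivers an admissible $\underline w$ below $X$ (automatically below $Y$, as $X \le Y$), keeping the intermediate base $B'$ inside the interval $[X, Y]$.
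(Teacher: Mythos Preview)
Your approach differs from the paper's in a way that matters. The paper runs an induction on $k := |B_1| - |B_1 \wedge Y|$: it applies Theorem~\ref{thm:strongexchange} to the pair $(B_1, B_2)$ with $\mathring{X}$ any lower cover of $B_1$ above $B_1 \wedge Y$, and takes the new base to be $B_1' := \mathring{X} \vee \underline{v}$ coming from the $\underline v$-clause of the theorem. Because $X \le B_1 \wedge Y \le \mathring{X}$, the containment $X \le B_1'$ is automatic---no control over any moved join-irreducible is needed---and $B_1'$ sits one step closer to $Y$. In short, the paper pushes $B_1$ down into $Y$ while carrying $X$ along for free, exploiting that the lower cover $\mathring{X}$ is the one input to the theorem that the user gets to choose.

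You instead push a base $B \le Y$ up toward $X$ via the $Y'$-clause, and for that you need a refinement $\underline w \le w$ lying below $X$. This is precisely the obstacle you flag, and it does not close with the argument you sketch: Theorem~\ref{thm:strongexchange} hands you $w$ only existentially, and your perspectivity $[\mathring{B_1} \wedge X, X] \cong [\mathring{B_1}, B_1]$ is a statement about $X$, not about $w$. Knowing $X \vee \mathring{B_1} = B_1 = \mathring{B_1} \vee w$ does not force $w$ or anything below it into $X$; already in a diamond interval there are three pairwise incomparable atoms, any one of which completes $\mathring{B_1}$ to $B_1$, and the theorem may well return one disjoint from $X$. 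Since $w$ is not yours to choose, you cannot guarantee the required $\underline w \le X$, and without it both conclusions you need about $B'$ (namely $B' \le Y$ and $|X \wedge B'| > |X \wedge B|$) are unsupported. The fix is to switch sides: encode the constraint ``contains $X$'' through the freely choosable $\mathring{X}$ and let the existential outputs fall where they may---which is exactly the paper's move.
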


\begin{remark}
In the proof of the strong exchange property, the directional DR-submodularity of the rank function plays an important role (Lemma~\ref{lem:strengthend-downward-DR} and its application to the proof of Theorem~\ref{thm:strongexchange}).
This is an application of the rank axiom and directional DR-submodularity.
\end{remark}

\begin{remark}
\label{rem:strong-exchange-set}
In a Boolean lattice, Theorem~\ref{thm:strongexchange} implies the usual strong exchange property.
In Boolean lattice, $w$ is uniquely determined as $X \setminus \mathring X$.
Because all join-irreducible elements are incomparable in the Boolean lattices, $w = \underline w$.
Furthermore, because $Y' \lor v = Y \lor w$ implies $v = w$ in a Boolean lattice, we have two bases $X - w + v$ and $Y + w - v$.
Here, $X + v := X \cup \{v\}$.
Using a similar argument, we can prove that Theorem~\ref{thm:strongexchange} implies a strong exchange property of the distributive lattices~\cite{barnabei1998matroids}.
\end{remark}

For atomic modular lattices, we can simplify the statement of the strong exchange property.

\begin{corollary}
\label{cor:strongexchange}
Let $X, Y \in \base$ and $\mathring{X} \prec X$ such that  $\mathring{X} \ge X \land Y$.
There then exists $w$ with $\mathring{X} \lor w = X$, $Y'$ with $w \lor (X \land Y) \le Y' \prec Y \lor w$, and $v \in Y$ with $Y' \lor v = Y \lor w$ such that
$\mathring{X} \lor v$ and $Y'$ are bases.
Here, $w$ and $v$ are join-irreducible elements.
\end{corollary}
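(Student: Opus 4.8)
The plan is to derive Corollary~\ref{cor:strongexchange} as a specialization of Theorem~\ref{thm:strongexchange}, exploiting the fact that in an atomic modular lattice every join-irreducible element is an atom, and hence (being a finite lattice) every element is the join of the atoms below it. In particular, whenever two join-irreducibles satisfy $p \le q$ we must have $p = q$, so each of the ``decorated'' elements $\underline w \le w$, $\underline y \le y$, $\underline v \le v$ appearing in Theorem~\ref{thm:strongexchange} collapses onto its undecorated counterpart. This is precisely what erases the universally quantified $\underline{(\cdot)}$ clauses and leaves the clean statement of the corollary.

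First I would invoke Theorem~\ref{thm:strongexchange} to obtain the join-irreducible $w$ with $\mathring X \lor w = X$, and check that $w \in \adm{Y}$. Since $\mathring X \ge X \land Y$ and $\mathring X \lor w = X \neq \mathring X$, we cannot have $w \le Y$ (otherwise $w \le X \land Y \le \mathring X$ would give $\mathring X \lor w = \mathring X$); as $w$ is an atom this forces $Y \land w = \bot \prec w$, i.e.\ $w \in \adm{Y}$. Hence $\underline w = w$ is a legitimate (and, by atomicity, the only) choice in the clause ``for all $\underline w \le w$ with $\underline w \in \adm{Y}$'', which supplies a base $Y'$ with $w \lor (X \land Y) \le Y' \prec Y \lor w$. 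From $w \le Y'$ and $w \not\le Y$ we get $Y \neq Y'$, so $Y \lor Y' = Y \lor w$; upper semimodularity applied to $Y \land w \prec w$ gives $Y \prec Y \lor w$, and lower semimodularity applied to $Y' \prec Y \lor Y'$ gives $Y \land Y' \prec Y$.

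The key step is to feed the final clause of Theorem~\ref{thm:strongexchange} a \emph{well-chosen} $y$, so that the resulting $v$ sits below $Y$. Because the lattice is atomistic and $Y \land Y' \prec Y$, there is an atom $y \le Y$ with $y \not\le Y \land Y'$; then $y \not\le Y'$, so $Y' \land y = \bot \prec y$, that is $y \in \adm{Y'}$, and $Y' < Y' \lor y \le Y \lor w$ combined with $Y' \prec Y \lor w$ yields $Y' \lor y = Y \lor w$. Thus this $y$ meets the hypotheses of the theorem's final clause, which produces join-irreducibles $\underline y \le y$ and $v$ with $\underline y = v \bmod Y \land Y'$ and $\mathring X \lor \underline v \in \base$ for every $\underline v \le v$ in $\adm{\mathring X}$, this set being nonempty. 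Atomicity collapses $\underline y = y$ and $\underline v = v \in \adm{\mathring X}$, whence $\mathring X \lor v \in \base$.

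It then remains to verify $v \in Y$ and $Y' \lor v = Y \lor w$. From $v \lor (Y \land Y') = y \lor (Y \land Y')$ and $y \le Y$ we obtain $v \le y \lor (Y \land Y') \le Y$, which is exactly $v \in Y$; joining $Y'$ on both sides (using $Y \land Y' \le Y'$) gives $Y' \lor v = Y' \lor y = Y \lor w$. Since $w$ and $v$ are join-irreducible by construction, this completes the statement. I expect the main obstacle to be precisely the choice of $y$ in the third step: for an \emph{arbitrary} admissible $y$ the bare $v$ returned by Theorem~\ref{thm:strongexchange} need only satisfy $Y' \lor v = Y \lor w$ and need not lie below $Y$, so the work is to exploit atomicity to select an atom $y \le Y$ outside $Y \land Y'$ and to confirm that this particular $y$ is admissible with the correct join — it is this that upgrades the conclusion to the sharper ``$v \in Y$''.
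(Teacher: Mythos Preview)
Your proof is correct and follows essentially the same route as the paper: invoke Theorem~\ref{thm:strongexchange} and use atomicity to collapse $\underline w = w$, $\underline y = y$, $\underline v = v$. Your argument is in fact more complete than the paper's two-line proof, which only records $Y' \lor v = Y' \lor y$ and does not spell out the choice of $y$ needed to secure $v \in Y$; your explicit selection of an atom $y \le Y$ with $y \not\le Y \land Y'$ (and the verification that $w \in \adm{Y}$) is precisely what fills that gap.
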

\begin{proof}
Because all join-irreducible elements are incomparable, 
$w = \underline w$ and $y = \underline y$.
The latter and $\underline y = v \bmod Y \land Y'$ imply that $Y' \lor v = y$.
\end{proof}

In the following sections, we provide applications of strong exchange property for mathematical optimization.

\subsubsection{Application 1: Valuated Supermatroid}
\label{subsubsec:valuated-supermatroid}
Valuated matroids~\cite{dress1992valuated} are a quantitative generalization of a matroid (on the Boolean lattices) defined by extending the strong exchange property of the matroids. 
Based on our strong exchange property on the supermatroids on atomic modular lattices, we can generalize the valuated matroids to valuated supermatroids on atomic modular lattices.

Let $\lat$ be a modular lattice, and let $\lat_k = \{X \in \lat \mid |X| = k\}$ for integer $k$.  
A function $\omega \colon \lat \rightarrow \mathbb{R}$ is a \emph{valuated supermatroid} if it satisfies the following:
For all $X,Y \in \lat_k$ and $\mathring X \prec X$ with $X \land Y \le \mathring X$, 
there exists $w \in \adm{\mathring{X}}$ with $\mathring{X} \lor w = X$ that satisfies the following.
For all $\underline w \le w$ with $\underline w \in \adm{Y}$, there exists $Y' \in \lat_k$ such that $(X \land Y) \lor \underline w \le Y' \prec Y \lor \underline w$.
In addition, for all  $y \in \adm{Y'}$ with $Y' \lor y = Y \lor \underline w$, there exist join-irreducible elements $\underline y \le y$ and $v$ such that $\underline y = v \bmod Y \land Y'$ and
\begin{align}
    \label{eq:def-valuated-matroid}
    \omega(X) + \omega(Y) \le  \omega(\mathring X \lor \underline v) + \omega(Y'),
\end{align}
for all $\underline v \in \adm{\mathring{X}}$ with $\underline v \le v$.
We note that $\underline v \in \adm{\mathring{X}}$ with $\underline v \le v$ is non-empty.

\begin{figure}
\begin{algorithm}[H]
\caption{Greedy algorithm for maximization problem of valuated supermatroid.}
\label{alg:greedy_valuated}
\begin{algorithmic}[1]
\State{$X = \bot$}
\For{$i = 1, \ldots, k$}
\State{Let $a_i \in \displaystyle \argmax_{a \in \adm{X}} \omega(X \lor a)$}
\State{$X \leftarrow X \lor a_i$}
\EndFor
\State{\Return $X$}
\end{algorithmic}
\end{algorithm}
\end{figure}

\begin{theorem}
\label{thm:greedy-valuated}
Let $\omega$ be a valuated supermatroid on an atomic modular lattice $\lat$.
Algorithm \ref{alg:greedy_valuated} finds a maximizer of $\omega$.
\end{theorem}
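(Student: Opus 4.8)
The plan is to prove the theorem in two stages: first reduce global optimality to a local (single‑exchange) optimality condition, and then verify that the greedy output satisfies it. Throughout, write $G := X_k$ for the element returned by Algorithm~\ref{alg:greedy_valuated}, and let $\bot = X_0 \prec X_1 \prec \cdots \prec X_k = G$ be the chain it builds, where $X_j = X_{j-1} \lor a_j$ with $a_j \in \argmax_{a \in \adm{X_{j-1}}} \omega(X_{j-1} \lor a)$. Since all join-irreducibles of an atomic modular lattice are atoms, every $a_j$ is an atom and the simplified exchange of Corollary~\ref{cor:strongexchange} is available. Moreover, $\lat_k$ is exactly the base family $\base$ of the (free) modular supermatroid $\{X \in \lat \mid |X| \le k\}$ (it is an ideal, and by the Jordan--Dedekind property the maximal elements below any $X$ all have height $\min(|X|,k)$), so Corollary~\ref{cor:strongexchange} and the defining inequality \eqref{eq:def-valuated-matroid} apply to any two elements of $\lat_k$.

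Call $X \in \lat_k$ \emph{locally optimal} if $\omega(X) \ge \omega(\mathring X \lor v)$ for every $\mathring X \prec X$ and every atom $v$ with $\mathring X \lor v \in \lat_k$. I would first prove that a locally optimal element is a global maximizer of $\omega$ over $\lat_k$, by induction on the distance $d(X,Y) := k - |X \land Y|$ to a competitor $Y \in \lat_k$. When $d = 0$ we have $X \land Y = X = Y$. For $d \ge 1$, choose by the Jordan--Dedekind property some $\mathring X$ with $X \land Y \le \mathring X \prec X$, and apply the valuated exchange (Corollary~\ref{cor:strongexchange} with the inequality \eqref{eq:def-valuated-matroid}) to the pair $X, Y$ and this $\mathring X$. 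This yields atoms $w$ and $v$, a base $Y'$ with $w \lor (X \land Y) \le Y' \prec Y \lor w$ and $Y' \lor v = Y \lor w$, a base $\mathring X \lor v$, and the inequality $\omega(X) + \omega(Y) \le \omega(\mathring X \lor v) + \omega(Y')$. Local optimality gives $\omega(\mathring X \lor v) \le \omega(X)$, hence $\omega(Y) \le \omega(Y')$. A short check shows $w \le X$, $w \not\le Y$, $v \le Y$, $v \not\le X$ (otherwise $w$ or $v$ would lie below $X \land Y \le \mathring X$, contradicting $\mathring X \lor w = X$ or $|\mathring X \lor v| = k$); combined with $w \lor (X \land Y) \le Y'$ this gives $|X \land Y'| \ge |X \land Y| + 1$, so $d(X, Y') < d(X, Y)$, and the induction hypothesis yields $\omega(Y') \le \omega(X)$. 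Chaining, $\omega(Y) \le \omega(X)$.

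It remains to show that $G$ is locally optimal, which I would establish by induction on $k$. The first $k-1$ greedy steps are exactly the greedy run for the height-$(k-1)$ valuation, so by the induction hypothesis $X_{k-1}$ maximizes $\omega$ over $\lat_{k-1}$. Now let $\mathring X \prec G$ and an atom $v$ with $B := \mathring X \lor v \in \lat_k$ be given; since $|\mathring X| = k-1$ and $B \ne G$ we have $G \land B = \mathring X$, so $B$ is a distance-one neighbour. If $\mathring X = X_{k-1}$, then $B = X_{k-1} \lor v$ with $v \in \adm{X_{k-1}}$, and optimality of the last greedy choice gives $\omega(B) = \omega(X_{k-1} \lor v) \le \omega(X_{k-1} \lor a_k) = \omega(G)$. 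The remaining, and main, difficulty is the case of an \emph{arbitrary} coatom $\mathring X \ne X_{k-1}$ of $G$: here the exchange property must be used to transport the single swap producing $B$ onto the greedy coatom $X_{k-1}$, after which $\omega(\mathring X) \le \omega(X_{k-1})$ (induction hypothesis) reduces it to the settled case. The hard part, as in the analysis underlying Theorem~\ref{thm:strongexchange}, is that the exchange only guarantees the \emph{existence} of the swapped atoms and does not let us prescribe them; aligning the removed atom with the greedy direction $a_k$ is precisely where the directional DR-submodularity of the rank function is needed, and I expect this to be the true obstacle of the proof.

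Combining the two stages gives the theorem: $G$ is locally optimal, hence by the first (reduction) stage it maximizes $\omega$ over $\lat_k$, so Algorithm~\ref{alg:greedy_valuated} returns a maximizer. The local-optimality reduction is routine once the valuated exchange inequality is in hand; the genuine work is the base case of greedy local optimality, namely ruling out that some single exchange through an uncontrolled coatom of $G$ beats it, which requires turning an uncontrolled exchange into one aligned with the greedy chain.
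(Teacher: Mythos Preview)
Your Stage~1 (local optimality implies global optimality via the valuated exchange and induction on $k - |X \land Y|$) is correct, but Stage~2 is left unfinished. For a coatom $\mathring X$ of $G$ different from $X_{k-1}$ you only sketch an idea --- transport the swap to $X_{k-1}$ via the exchange and use $\omega(\mathring X) \le \omega(X_{k-1})$ --- and this idea does not close the gap: the valuated exchange inequality bounds $\omega(X)+\omega(Y)$ above by $\omega(\mathring X \lor v)+\omega(Y')$; it gives no comparison between $\omega(\mathring X \lor v)$ and $\omega(X_{k-1}\lor v')$ for any $v'$, and knowing $\omega(\mathring X)\le\omega(X_{k-1})$ says nothing about the height-$k$ values built on top of them. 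Invoking directional DR-submodularity of the rank function is also beside the point here, since $\omega$ is an arbitrary valuation, not a rank function.

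The paper sidesteps this by collapsing your two stages into one and never needing local optimality at an arbitrary coatom. For each level $t$ it picks an optimal $O_t \in \lat_t$ with $|X_t \land O_t|$ maximal and, if $O_t \ne X_t$, applies the valuated exchange to $(X_t, O_t)$ with the \emph{greedy} coatom $\mathring X = X_{t-1}$. This yields some atom $b$ and some $O_t' \in \lat_t$ with $\omega(X_t)+\omega(O_t) \le \omega(X_{t-1}\lor b)+\omega(O_t')$ and $|X_t \land O_t'| > |X_t \land O_t|$. Now the greedy choice alone, $\omega(X_{t-1}\lor a_t)\ge\omega(X_{t-1}\lor b)$, gives $\omega(O_t')\ge\omega(O_t)$, so $O_t'$ is also optimal but strictly closer to $X_t$, contradicting the choice of $O_t$. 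In short, the ``closest optimal'' trick lets the greedy inequality play the role of your local-optimality hypothesis, so the only coatom ever needed is $X_{t-1}$; your detour through full local optimality is precisely what manufactures the obstacle you flag as ``the true obstacle of the proof.''
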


A rank function of a supermatroid on an atomic modular lattice is a valuated supermatroid.
A non-trivial example of this is the objective function of principal component analysis with a single data point:
\begin{proposition}
\label{prop:pca-is-valuated-supermatroid}
Let $ \omega_{\mathrm{PCA}} \colon \lat(\R^n) \to \mathbb{R}$ be
\begin{align}
    \omega_{\mathrm{PCA}}(X) = \| \Pi_X d \|^2,
\end{align}
where  $\Pi_X$ is the projection into subspace $X$, and  $d$ is a vector in $\mathbb{R}^n$.
Then, $ \omega_{\mathrm{PCA}}$ is a valuated modular supermatroid.
\end{proposition}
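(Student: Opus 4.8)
The plan is to verify the defining exchange inequality of a valuated supermatroid directly, exploiting the single-data-point structure of $\omega_{\mathrm{PCA}}$. Throughout write $f(Z) := \omega_{\mathrm{PCA}}(Z) = \|\Pi_Z d\|^2 = d^\top \Pi_Z d$; we may assume $\|d\| = 1$, since scaling $f$ by a positive constant preserves the class. The function $f$ is monotone, and for an atom $a \not\le Z$ its increment is the nonnegative quantity $g_Z(a) := f(Z \lor a) - f(Z) = \langle \Pi_{Z^\perp} d, \hat a \rangle^2$, where $\hat a$ is the unit vector along $\Pi_{Z^\perp} a$. A first reduction: every element occurring in the inequality ($X$, $Y$, $\mathring X \lor v$, $Y'$) lies below $X \lor Y$, and $f(Z) = \|\Pi_Z (\Pi_{X \lor Y} d)\|^2$ for all such $Z$; hence we may replace $d$ by $\Pi_{X \lor Y} d$ and assume $d \in X \lor Y$, still working with the PCA objective on the atomic modular lattice $\lat(X \lor Y)$.

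Because $\lat(\R^n)$ is atomic, Corollary~\ref{cor:strongexchange} (applied to the rank-$k$ truncated uniform supermatroid, whose bases are exactly $\lat_k$) collapses the quantifier chain in the definition of a valuated supermatroid, so it suffices to produce join-irreducible elements $w \le X$ with $\mathring X \lor w = X$ and $v \le Y$, together with a base $Y' \prec Y \lor w$ satisfying $w \lor (X \land Y) \le Y'$, $Y' \lor v = Y \lor w$, and $\mathring X \lor v \in \base$, for which $f(X) + f(Y) \le f(\mathring X \lor v) + f(Y')$. Using $f(X) = f(\mathring X) + g_{\mathring X}(w)$, $f(\mathring X \lor v) = f(\mathring X) + g_{\mathring X}(v)$ and $f(Y \lor w) = f(Y) + g_Y(w) = f(Y') + g_{Y'}(v)$, this inequality is equivalent to the compensation
\[ \big( g_Y(w) - g_{\mathring X}(w) \big) + \big( g_{\mathring X}(v) - g_{Y'}(v) \big) \ge 0; \]
equivalently, writing $\psi(Z) = 1 - f(Z) = \operatorname{dist}(d, Z)^2$, the exchanged pair must be collectively at least as close to $d$, i.e. $\psi(\mathring X \lor v) + \psi(Y') \le \psi(X) + \psi(Y)$.

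The freedom available is: $w$ is any atom below $X$ outside the hyperplane $\mathring X$; $Y'$ is any hyperplane of $M := Y \lor w$ through $w$ with $X \land Y \le Y'$; and $v$ is any atom below $Y$ lying outside $Y'$ and outside $\mathring X$. The strategy is to route the direction of $d$ into one of the two new bases so that it captures the full energy $\|d\|^2$, controlling the other summand by monotonicity. If $d \in \mathring X$, then $f(\mathring X \lor v) = 1 = f(X)$ for every $v$, and a dimension count inside $M$ produces a hyperplane $Y' \ni w$ with $f(Y') = f(M) \ge f(Y)$ (choose the normal of $Y'$ in $M$ orthogonal to both $w$ and $\Pi_M d$), which settles this case; an admissible completing $v$ exists by genericity over $\R$. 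The complementary regime $d \notin \mathring X$ is treated by choosing $v \le Y$ whose residual $\Pi_{\mathring X^\perp} v$ is as aligned with $\Pi_{\mathring X^\perp} d$ as the constraints permit, while again taking $Y'$ to retain $f(M)$.

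The main obstacle is the subregime of $d \notin \mathring X$ in which neither $\mathring X \lor v$ nor $Y'$ can be forced to contain $d$; this occurs precisely when $(\mathring X \lor \Span{d}) \land Y \le \mathring X$, so no admissible $v \le Y$ is parallel to the residual of $d$ modulo $\mathring X$. There the inequality cannot come from capturing full energy and must instead be obtained from the compensation displayed above, namely $g_Y(w) - g_{\mathring X}(w) \ge g_{Y'}(v) - g_{\mathring X}(v)$. I expect to establish this either by a direct two-dimensional computation in the plane spanned by the two residual directions $\Pi_{\mathring X^\perp} d$ and $\Pi_{Y^\perp} d$, or by comparing the four gains through the common lower bound $X \land Y$ (which satisfies $X \land Y \le \mathring X, Y, Y'$) and invoking the downward DR-submodularity of $\omega_{\mathrm{PCA}}$ from \cite{nakashima2018subspace}; this is the exact point at which directional DR-submodularity enters, mirroring its role in the proof of the strong exchange property (Theorem~\ref{thm:strongexchange}). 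The residual bookkeeping is to check that the energy-optimal $w$ and $v$ can be selected compatibly with the geometric exchange relations $Y' \prec Y \lor w$, $Y' \lor v = Y \lor w$ and $\mathring X \lor v \in \base$, which holds by a genericity argument over the infinite field $\R$.
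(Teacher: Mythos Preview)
Your proposal is not a complete proof: the ``main obstacle'' subregime is left open, with only two speculative routes (a two-dimensional residual computation, or a DR-submodularity comparison) neither of which you actually carry out. The DR-submodularity route in particular does not obviously close the gap, because the four gains $g_Y(w)$, $g_{\mathring X}(w)$, $g_{\mathring X}(v)$, $g_{Y'}(v)$ sit at base points that are not linearly ordered, and the direction-matching built into Definition~\ref{def:downwarddr} does not align with the exchange constraints $Y'\lor v = Y\lor w$ and $\mathring X\lor v\in\base$ that you must respect.

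The paper sidesteps this obstacle by a cross-over choice you did not consider. Set $d' = (\Pi_X - \Pi_{X\land Y})d$ and $d'' = (\Pi_Y - \Pi_{X\land Y})d$. In the nontrivial case $|X|-|X\land Y|\ge 2$ with $d'\notin\mathring X$, take $w=\Span{d'}$ and $v=\Span{d''}$. Since $d'\perp X\land Y$, the space $(X\land Y)\lor w$ already contains $\Pi_{X\land Y}d + d' = \Pi_X d$, so $\omega(Y')\ge\omega((X\land Y)\lor w)=\omega(X)$; symmetrically $\omega(\mathring X\lor v)\ge\omega((X\land Y)\lor v)=\omega(Y)$. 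Adding these two \emph{separate} inequalities yields $\omega(\mathring X\lor v)+\omega(Y')\ge\omega(X)+\omega(Y)$ directly---no compensation analysis, no DR-submodularity. In other words, rather than routing the full energy $\|d\|^2$ into one exchanged base (your strategy, which fails precisely in your obstacle case), the paper routes $\Pi_X d$ into $Y'$ and $\Pi_Y d$ into $\mathring X\lor v$. Note also that the relevant dichotomy is whether $d'\in\mathring X$ (which forces $\omega(\mathring X)=\omega(X)$), not whether $d\in\mathring X$; your easy case is strictly narrower than the paper's.
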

We conjecture that the objective function of principal component analysis is a valuated supermatroid even if there is more than a single data point.
If this conjecture is true, we can explain the fact that the greedy algorithm finds the optimal solution of $w_{\mathrm{PCA}}$ from the perspective of a matroid.

\subsubsection{Application 2: DR-submodular maximization on supermatroid}
\label{subsubsec:DR-submodular-opt}

The authors considered a maximization problem of a monotone downward DR-submodular function over a modular lattice $\lat$ under the height  and the knapsack constraints~\cite{nakashima2018subspace}.
We proved that the greedy algorithm has approximation guarantees by generalizing the standard results for Boolean lattices.
It seems natural to consider a modular supermatroid constraint, i.e., 
\begin{align}
\label{eq:problem}
    \begin{array}{llll}
         \text{maximize} & f(X) & \text{subject to} & X \in \ind,
    \end{array}
\end{align}
where $f \colon \lat \to \mathbb{R}$ is a downward DR-submodular function and $\ind \subseteq \lat$ is a modular supermatroid. 
However, we lacked the strong exchange property for supermatroids, which is a key technique of the optimization problem.
Therefore, it was unknown whether the greedy algorithm has an approximation guarantee for a downward DR-submodular function maximization under a modular supermatroid constraint.
Here, using the strong exchange property, we give a partial answer:
The greedy algorithm has a $1/2$-approximation gurantee for a \emph{strong DR-submodular} maximization problem over atomic modular lattices under a modular supermatroid constraint.

The strong DR-submodularity is another generalization of DR-submodularity and stronger than downward DR-submodularity~\cite{nakashima2018subspace}.
\begin{definition}[Strong DR-submodularity]
Let $\lat$ be a modular lattice.
A function $f \colon \lat \to \mathbb{R}$ is a \emph{strong DR-submodular} if
\begin{align}
    f(Y \lor a) - f(Y) \le f(X \lor \underline a) - f(X),
\end{align}
for all $X, Y \in \lat$ with $X \le Y$, $a \in \adm{Y}$, and $\underline a \le a$ with $\underline a \in \adm{X}$.
\end{definition}

Two facts regarding a strong DR-submodularity should be noted~\cite{nakashima2018subspace}.
First, a strong DR-submodularity implies a bidirectional DR-submodularity, however, the converse is not true.
In addition, a strong DR-submodularity is equivalent to downward directional DR-submodularity on a distribuitve lattice.

\begin{figure}
\begin{algorithm}[H]
\caption{Greedy algorithm for modular supermatroid constrained problem.}
\label{alg:greedy_DR}
\begin{algorithmic}[1]
\State{$X = \bot$}
\For{$i = 1, \ldots, k$}
\State{Let $a_i \in \displaystyle \argmax_{a \in \adm{X}, X \lor a \in \mathcal{\ind}} f(X \lor a)$}
\State{$X \leftarrow X \lor a_i$}
\EndFor
\State{\Return $X$}
\end{algorithmic}
\end{algorithm}
\end{figure}

\begin{theorem}
\label{thm:greedy-DR-matroid}
Consider a maximization problem of a monotone strong DR-submodular function $f$ over atomic modular lattice $\lat$ under a modular supermatroid constraint $\ind \subseteq \lat$.
Let $k$ be the rank of $\ind$.
Algorithm \ref{alg:greedy_DR} then has an approximation ratio of $1/2$.
\end{theorem}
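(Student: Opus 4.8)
The plan is to adapt the classical Fisher--Nemhauser--Wolsey $1/2$-analysis for monotone submodular maximization under a matroid constraint to the atomic modular setting, using the strong exchange property (Corollary~\ref{cor:strongexchange}) in place of the usual base exchange. First I would fix an optimal solution $O$; since $f$ is monotone and every maximal element of $\ind$ has height $k$ (the rank, by the height axiom), $O$ may be taken to be a base, $O \in \base$. Let $\bot = X_0 \prec X_1 \prec \cdots \prec X_k = G$ be the greedy chain, where $X_i = X_{i-1} \lor a_i$ and $G$ is the output. A preliminary observation is that on an atomic lattice the strong DR-submodularity of $f$ specializes to ordinary diminishing returns for atoms: for $X \le Y$ and an atom $a$ with $a \not\le Y$ one has $f(Y \lor a) - f(Y) \le f(X \lor a) - f(X)$, because the condition $\underline a \le a$ forces $\underline a = a$ when $a$ is join-irreducible. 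I will use this inequality together with monotonicity (so $f(O) \le f(O \lor G)$ and all marginals are nonnegative) throughout.

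Next I would establish the two bounds bracketing $f(O)$. Writing $O \lor G$ via a maximal chain $G = Z_0 \prec \cdots \prec Z_m = O \lor G$ in which each step adjoins an atom $o_j \le O$ with $o_j \not\le Z_{j-1}$, diminishing returns (applied with $G \le Z_{j-1}$) gives
\begin{align}
f(O) \le f(O \lor G) = f(G) + \sum_{j=1}^m \big( f(Z_j) - f(Z_{j-1}) \big) \le f(G) + \sum_{j=1}^m \big( f(G \lor o_j) - f(G) \big).
\end{align}
Here the modular height law $|O \lor G| + |O \land G| = |O| + |G|$ yields $m = k - |O \land G| \le k$, so there are at most $k$ optimal atoms $o_j$ (each an atom $\le O$ with $o_j \not\le G$) to charge. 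It therefore remains to bound $\sum_j ( f(G \lor o_j) - f(G) )$ by $f(G)$.

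The heart of the argument is an injective assignment $j \mapsto i(j) \in \{1,\dots,k\}$ together with, for each $j$, a coatom $\mathring G_{j} \prec G$ satisfying $\mathring G_{j} \lor a_{i(j)} = G$, such that (i) $X_{i(j)-1} \le \mathring G_{j}$ and (ii) $\mathring G_{j} \lor o_j \in \base$. Granting this, the ideal (hereditary) property of $\ind$ gives $X_{i(j)-1} \lor o_j \le \mathring G_j \lor o_j \in \ind$; moreover $o_j \not\le X_{i(j)-1}$ (otherwise $\mathring G_j \lor o_j = \mathring G_j$ would have height $k-1$, contradicting that it is a base), so $o_j \in \adm{X_{i(j)-1}}$ is a feasible candidate at greedy step $i(j)$. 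Greedy optimality then gives $f(X_{i(j)}) - f(X_{i(j)-1}) \ge f(X_{i(j)-1} \lor o_j) - f(X_{i(j)-1})$, and diminishing returns (with $X_{i(j)-1} \le G$) gives $f(X_{i(j)-1} \lor o_j) - f(X_{i(j)-1}) \ge f(G \lor o_j) - f(G)$. Summing over the distinct indices $i(j)$ and telescoping,
\begin{align}
\sum_{j=1}^m \big( f(G \lor o_j) - f(G) \big) \le \sum_{j=1}^m \big( f(X_{i(j)}) - f(X_{i(j)-1}) \big) \le \sum_{i=1}^k \big( f(X_i) - f(X_{i-1}) \big) = f(G) - f(\bot) \le f(G),
\end{align}
which combined with the first display yields $f(O) \le 2 f(G)$, i.e. the ratio $1/2$.

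I expect the assignment above to be the main obstacle, and it is exactly where the strong exchange property enters. The natural choice $\mathring G_i := \bigvee_{l \neq i} a_l$ has height $k-1$ (a join of $k-1$ atoms has height at most $k-1$, yet adjoining $a_i$ reaches height $k$), so $\mathring G_i \prec G$ and $X_{i-1} = \bigvee_{l<i} a_l \le \mathring G_i$ automatically, settling condition (i). Condition (ii) then asks that $o_j$ replace $a_{i(j)}$ in $G$, i.e. that $a_{i(j)}$ lie in the fundamental circuit of $o_j$ with respect to $G$; the required injective system of distinct representatives is the lattice analogue of Brualdi's symmetric base-exchange bijection between $G$ and $O$. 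I would construct it by iterating Corollary~\ref{cor:strongexchange}, whose single symmetric swap produces from a chosen $w \le G$ a matched $v \le O$ with both $\mathring X \lor v$ and the swapped partner bases, while carefully maintaining disjointness of the consumed greedy atoms and handling the non-uniqueness of atom decompositions inherent to the atomic modular structure; alternatively one verifies Hall's condition for the fundamental circuits using that the atoms $o_j$ are independent. Assembling this matching so that it is simultaneously compatible with every greedy prefix $X_{i-1}$ is the delicate point, and once it is in hand the charging above closes the proof.
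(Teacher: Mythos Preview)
Your overall architecture coincides with the paper's: bound $f(O)\le f(G\lor O)$, telescope along a chain from $G$ to $G\lor O$, then charge each marginal to a distinct greedy step via strong DR-submodularity and greedy optimality. The paper also uses an intermediate matching between atoms of $O$ and greedy indices, obtained by iterating the strong exchange property (Corollary~\ref{cor:strongexchange}); this is exactly what you isolate as ``the delicate point.''

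The one substantive divergence is in how the matching is set up, and your version as stated has a gap. You first \emph{fix} an arbitrary chain $G=Z_0\prec\dots\prec Z_m=G\lor O$ with atoms $o_j\le O$, and only afterwards seek an injective $j\mapsto i(j)$ with $\mathring G_{i(j)}\lor o_j\in\base$ (a Brualdi-type bijection against the pre-chosen $o_j$'s). In an atomic modular lattice this is not known to hold for an arbitrary choice of the $o_j$'s, and the paper does not prove it. Instead, the paper's Lemmas~\ref{lem:diamond-matching}--\ref{lem:matching} construct the chain atoms $w_i$ \emph{and} the injective indices $j_i$ simultaneously: starting from the greedy chain one separates the steps according to whether $X_i\land O$ grows, builds a chain $X\land O=Z_0\prec\dots\prec Z_\alpha=X$ from the ``outside-$O$'' greedy atoms, and then repeatedly applies Corollary~\ref{cor:strongexchange} to produce a matching chain $X\land O=Y_0\prec\dots\prec Y_\alpha=O$ whose atoms $y_i$ satisfy $Z_{i-1}\lor y_i\in\ind$; lifting via $W_i=X\lor Y_i$ gives the chain from $X$ to $X\lor O$ together with the required injective $j_i$'s. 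In other words, the exchange dictates which atoms $w_i$ to use, rather than the atoms being fixed in advance. Your extra intermediate bound $f(Z_{j-1}\lor o_j)-f(Z_{j-1})\le f(G\lor o_j)-f(G)$ is valid but does not help here, since the final charging still depends on the particular $o_j$'s. If you rewrite your matching paragraph so that the chain $\{o_j\}$ is produced by the iteration of Corollary~\ref{cor:strongexchange} (as in Lemmas~\ref{lem:diamond-matching}--\ref{lem:matching}), rather than chosen beforehand, the rest of your argument goes through verbatim and matches the paper's proof. Note also that the paper only needs $X_{j_i}\lor w_i\in\ind$, which is weaker than your $\mathring G_j\lor o_j\in\base$; the weaker statement already suffices for the greedy comparison.
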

We describe the difficulties generalizing to general modular lattices and directional DR-submodular functions in Remark~\ref{rem:downward-max-generalization-hard}.
Such generalizations are left as for future studies.

When the ``linearity'' of the objective function is characterized by the curvature as in a Boolean lattice case~\cite{conforti1984submodular}, we can guarantee a refined approximation.
\begin{definition}[\cite{nakashima2018subspace}]
A monotone bidirectional DR-submodular function $f$ has a curvature $c$ if, for all $X\in \mathcal{L}$, $a \in \adm{X}$, and minimal $\underline{a} \le a$, 
\begin{align}
    f(X \lor a) - f(X) \ge (1-c) f(\underline{a}).
\end{align}
\end{definition}

\begin{theorem}
\label{thm:greedy-DR-matroid-curvature}
Consider a maximization problem of a monotone bidirectional DR-submodular function over a  (possibly non-atomic) modular lattice $\lat$ with curvature $c$ under modular supermatroid constraint $\ind \subseteq \lat$.
Algorithm~\ref{alg:greedy_DR} has an approximation ratio of $1-c$.
\end{theorem}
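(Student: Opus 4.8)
The plan is to run the standard greedy-guarantee argument for matroid-constrained monotone submodular maximization, adapted to the lattice setting via the strong exchange property (Corollary~\ref{cor:strongexchange}) and the curvature hypothesis. Let $X = \bot \prec X_1 \prec \cdots \prec X_k$ be the chain produced by Algorithm~\ref{alg:greedy_DR}, where $X_i = X_{i-1} \lor a_i$, and let $O \in \base$ be an optimal basis (a maximizer of $f$ over $\ind$; since $f$ is monotone the optimum lies among the maximal independent elements, which all have height $k$ by the rank/height axiom). The goal is to show $f(X_k) \ge (1-c) f(O)$. The engine of the proof is a bijective charging: I want to pair each greedy step $i$ with a join-irreducible ``piece'' $v_i$ of the optimum so that the optimum decomposes as a telescoping sum $f(O) \le \sum_i \bigl(f(\cdot \lor v_i) - f(\cdot)\bigr) + f(\bot)$, and then bound each optimum increment against the corresponding greedy increment.

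The construction of the pieces $v_i$ proceeds by applying the strong exchange property iteratively. First I would arrange the optimal basis into a chain $\bot \prec O_1 \prec \cdots \prec O_k = O$, so that $O$ is built up one join-irreducible at a time. For each $i$ I apply Corollary~\ref{cor:strongexchange} to exchange the $i$-th greedy atom $a_i$ with a matching join-irreducible piece $v_i$ of $O$: taking $\mathring X \prec X_i$ with $\mathring X = X_{i-1}$ (so $\mathring X \lor a_i = X_i$ and $\mathring X \ge X_i \land O$ can be ensured by the atomicity and the maximality argument), the corollary yields $v_i \in O$ with $X_{i-1} \lor v_i \in \base$ and a companion basis $Y'$. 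The key structural consequence I need is that the $v_i$ exhaust $O$: the collection $\{v_1, \dots, v_k\}$ join-generates $O$, i.e. $O = \bigvee_i v_i$, so that $f(O)$ telescopes along the chain $\bot \prec v_1 \prec v_1 \lor v_2 \prec \cdots \prec O$. Here the curvature comes in: since $X_{i-1} \lor v_i \in \base$ is feasible and $a_i$ was the greedy maximizer over all feasible single-atom extensions of $X_{i-1}$, we have $f(X_{i-1} \lor v_i) - f(X_{i-1}) \le f(X_i) - f(X_{i-1})$, while the curvature bound $f(\cdot \lor v_i) - f(\cdot) \ge (1-c) f(v_i)$ controls the loss when we slot $v_i$ into the (larger) optimal prefix rather than the greedy prefix. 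Combining the telescoping bound $f(O) - f(\bot) \le \sum_i \bigl(f(O_{i-1} \lor v_i) - f(O_{i-1})\bigr)$ (by bidirectional DR-submodularity applied downward from the optimal prefixes to $X_{i-1}$) with the two inequalities yields $(1-c) f(O) \le \sum_i \bigl(f(X_i) - f(X_{i-1})\bigr) = f(X_k) - f(\bot)$, whence the claimed $1-c$ ratio after absorbing $f(\bot)$ by monotonicity.

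The main obstacle I anticipate is establishing that the exchange pieces $\{v_i\}$ genuinely cover the optimum in a way that makes the telescoping valid — i.e. that the inductive application of the strong exchange property can be threaded so that the residual optimal basis shrinks by exactly $v_i$ at each step, keeping the running intersection $X_i \land (\text{current optimum})$ large enough that the hypothesis $\mathring X \ge X \land Y$ of Corollary~\ref{cor:strongexchange} is maintained throughout. In the Boolean case this is the classical matroid bijection between the two bases, but over a modular lattice the ``$v = w \bmod Y \land Y'$'' bookkeeping (Corollary~\ref{cor:strongexchange}) means the piece $v_i$ is only determined modulo the current meet, so I must verify that the modular law keeps the heights additive and that no piece is double-counted. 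I would handle this by an induction on $k$ in which the exchanged basis $Y'$ from step $i$ becomes the optimum for the subproblem on the interval above $X_i$, using the Jordan--Dedekind property to guarantee that all the prefixes have the correct heights. The second, more routine, obstacle is verifying that bidirectional DR-submodularity (guaranteed for the rank-type arguments and assumed here for $f$) legitimately transports the greedy increment inequality from the greedy prefix $X_{i-1}$ up to the optimal prefix $O_{i-1}$; this is exactly what the upward direction of the bidirectional hypothesis is for, so it should follow directly from Definition~\ref{def:downwarddr} applied on $\lat^*$.
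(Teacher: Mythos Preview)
Your proposal has a genuine gap: you build the charging scheme on Corollary~\ref{cor:strongexchange}, but that corollary is stated only for \emph{atomic} modular lattices (its proof collapses $\underline w \le w$ and $\underline y \le y$ via incomparability of join-irreducibles), whereas Theorem~\ref{thm:greedy-DR-matroid-curvature} is explicitly for \emph{possibly non-atomic} modular lattices. You yourself invoke ``the atomicity'' to justify $\mathring X \ge X_i \land O$, which already signals the mismatch. The bijective-matching route you sketch is essentially the proof strategy of Theorem~\ref{thm:greedy-DR-matroid} (the $1/2$-approximation, which carries the atomic hypothesis), and Remark~\ref{rem:downward-max-generalization-hard} spells out why that route does not extend beyond the atomic case: in the general strong exchange (Theorem~\ref{thm:strongexchange}) the exchanged element $v$ is determined only modulo $Y \land Y'$ and may satisfy $v \le Y'$, so the invariant ``the residual optimum shrinks by exactly one piece'' that your telescoping needs cannot be maintained.

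The paper's proof sidesteps strong exchange entirely. It is a short induction on the rank $k$: at the final greedy step, the augmentation axiom (I2) alone furnishes some $w \le X^*$ with $X_{k-1} \lor w \in \ind$; greediness gives $f(X_k)-f(X_{k-1}) \ge f(X_{k-1}\lor w)-f(X_{k-1})$; curvature then gives $\ge (1-c)f(\underline w)$ for a minimal $\underline w \le w$; upward DR-submodularity gives $f(\underline w) \ge f(X^*)-f(\mathring{X^*})$ for some $\mathring{X^*}\prec X^*$; and the induction hypothesis on the rank-$(k{-}1)$ problem gives $f(X_{k-1}) \ge (1-c)f(\mathring{X^*})$. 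Adding these yields $f(X_k) \ge (1-c)f(X^*)$. No base matching, no strong exchange, and hence no atomicity is needed.
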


\subsection{Dual Matroids}
\label{subsec:modulardual}
In this section, we define a dual matroid of a modular supermatroid.
This duality is used to prove Proposition \ref{prop:rank-upward}, and will have an independent interest.

Let $\bar{(\cdot)} \colon \lat \to \lat^*$ be an order-reversing isomorphism.
Let $\base$ be the set of bases on $\lat$.
Then, the corresponding dual matroid is a modular supermatroid on $\lat^*$ whose set of bases $\base^*$ is
\begin{align}
    \base^* = \{\bar{B} \mid B \in \base\}.
\end{align}
The dual matroid is indeed a modular supermatroid because the base Axiom is self-dual (i.e., invariant for an order-reversal) and $\bar{(\cdot)}$ is an order-reversing isomorphism.

\begin{remark}
We can always define $\bar{(\cdot)}$ by $\bar{X} = X$ for all $X$.
For some special classes of lattices, we can take a natural $\bar{(\cdot)}$.
For a set lattice over finite set $E$ (i.e., finite boolean lattice), 
$\lat \cong \lat^*$, and 
we can define $\bar{X} = E \setminus X$.
Similarly, for a vector lattice $\lat(\R^d)$, 
we have $\lat(\R^n) \cong \lat(\R^n)^*$ and
can define a map $\bar{X} = X^\perp$.
For a distributive lattice, we can define $\bar{(\cdot)}$ as  the complement operation on a Birkhoff's  representation. 
\end{remark}

\begin{example}
Let $\lat$ be a set lattice over a finite set $E$ and $\bar{X} = E \setminus X$.
Then, $\base^*$ defines the usual dual matroid.
\end{example}

\begin{example}
Let $\bar{X} = X$.
In this setting, an element $X \in \lat^*$ is an independent set of the dual matroid if $X \ge B$ for some base $B$ of the primal modular supermatroid.
\end{example}

We characterize the rank function of the dual matroid as follows:
\begin{proposition}
\label{prop:dualrank}
Let $r$ be the rank function of a modular supermatroid.
Let $r^*$ be the rank function of the dual matroid.
Then,
\begin{align}
    r^*(\bar{X}) =  r(X) + (|\top| - |X|) - r(\top),
\end{align}
where $|\cdot|$ is the height of $\lat$.
\end{proposition}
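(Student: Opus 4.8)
The plan is to unwind the definition of the dual rank function on $\lat^*$ and reduce it, by means of the modular height identity, to a statement purely about the primal matroid. First I would record two facts about the order-reversal. Since $\bar{(\cdot)} \colon \lat \to \lat^*$ is an order-reversing isomorphism and both $\lat$ and $\lat^*$ are modular (hence satisfy the Jordan--Dedekind property), a maximal chain from the bottom of $\lat^*$ up to $\bar{Y}$ corresponds under $\bar{(\cdot)}^{-1}$ to a maximal chain from $\top$ down to $Y$ in $\lat$; consequently the height of $\bar{Y}$ in $\lat^*$ equals $|\top| - |Y|$. Second, the independent sets of the dual matroid are exactly the images $\bar{I}$ of those $I \in \lat$ that dominate some base, because the independent sets form the ideal of $\lat^*$ generated by $\base^* = \{\bar{B} \mid B \in \base\}$, and $\bar{I} \le \bar{B}$ in $\lat^*$ is equivalent to $I \ge B$ in $\lat$.

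Combining these observations, and using that $\bar{I} \le \bar{X}$ in $\lat^*$ means $I \ge X$ in $\lat$, I would rewrite
\begin{align*}
r^*(\bar{X}) = \max\{|\top| - |I| \mid I \ge X,\ I \ge B \text{ for some } B \in \base\} = |\top| - \min_{B \in \base} |X \lor B|,
\end{align*}
the last equality holding because for a fixed base $B$ the least element above both $X$ and $B$ is $X \lor B$, so the minimizing $I$ is $X \lor B$. Now the modularity of $\lat$ enters through the height identity $|X \lor B| + |X \land B| = |X| + |B| = |X| + r(\top)$, valid for every $X$ and every base $B$ (all bases share the height $r(\top)$ by Lemma~\ref{lem:base-sameheight}). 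Substituting yields $r^*(\bar{X}) = |\top| - |X| - r(\top) + \max_{B \in \base} |X \land B|$, so the whole proposition reduces to the single identity $\max_{B \in \base} |X \land B| = r(X)$.

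The crux, and the step I expect to be the main obstacle, is this last identity. The inequality $\le$ is immediate: for any base $B$, the meet $X \land B$ lies below both $X$ and $B \in \ind$, hence $X \land B \in \ind$ since $\ind$ is an ideal, so $|X \land B| \le r(X)$. For the reverse inequality I would take a maximal independent element $I \le X$, which by (H2) satisfies $|I| = r(X)$, and extend it to a base: since $\ind$ is a finite ideal, $I$ lies below some maximal element $B$ of $\ind$, and such $B$ is a base. Then $I \le X \land B$ gives $|X \land B| \ge |I| = r(X)$. The delicate point to verify is precisely that every independent set extends to a base, equivalently that the maximal elements of $\ind$ dominating $I$ all have height $r(\top)$; I would justify this from (H2) applied at $X = \top$ together with the augmentation axiom (I2). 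Once this extension property is in hand, the two inequalities combine to give $\max_{B \in \base} |X \land B| = r(X)$, and substituting back produces $r^*(\bar{X}) = r(X) + (|\top| - |X|) - r(\top)$, as claimed.
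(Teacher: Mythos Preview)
Your argument is correct and follows essentially the same route as the paper: both reduce $r^*(\bar{X})$ to $|\top| - \min_{B \in \base}|X \lor B|$ via the order-reversal, then apply the modular height identity to rewrite this in terms of $\max_{B \in \base}|X \land B|$. The paper simply asserts the characterization $r(X) = \max_{B \in \base}|B \land X|$ at the outset, whereas you supply the two-inequality proof of it; otherwise the structure is the same.
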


\subsection{Examples}

Here, we give examples of modular supermatroids.
We use our rank axiom to show the structure is in fact modular supermatroid.

\begin{example}[Uniform Matorid]
Let $\mathcal{L}$ be a modular lattice.
For an integer $k$, the set $\mathcal{I} = \{ I \in \mathcal{L}: |I| \le k \}$ of the lattice elements whose height is at most $k$ forms a modular supermatroid 
because  the rank function of $\mathcal{I}$ is $h(I) = \min \{ |I|, k \}$ and satisfies the rank axiom.
\end{example}
\begin{example}[Partition Matorid]
Let $\mathcal{L}_1, \dots, \mathcal{L}_n$ be modular lattices.
Then their direct product $\mathcal{L} = \mathcal{L}_1 \times \cdots \times \mathcal{L}_n := \{ (X_1, \dots, X_n) \mid X_1 \in \mathcal{L}_1, \dots, X_n \in \mathcal{L}_n \}$ forms a modular lattice with respect to the component-wise partial order.
Let $k_1, \dots, k_n$ be integers.
Then, the set $\mathcal{I} = \{ (I_1, \dots, I_n) \mid |I_1| \le k_1, \dots, |I_n| \le k_n \}$ forms a modular supermatroid on $\mathcal{L}$ whose rank function is $h((I_1, \dots, I_n)) = \min \{ |I_1|, k_1 \} + \dots + \min \{ |I_n|, k_n \}$.
\end{example}

\begin{example}[Linear Matroid]
Let $\lat$ be a sublattice of $\lat(\R^d)$ such that for all $X \in \lat$, the height of $X$ in $\lat$ is the same as that of in $\lat(\R^d)$. 
A function $A \colon \R^d \to \R^k$ is said to be \emph{collinear-preserving} if $A(x)$, $A(y)$, and $A(z)$ are collinear (i.e., three points are on a certain straight line) for all collinear $x$, $y$, and $z$ in $\R^d$.
A \emph{linear supermatroid} $\ind_A \subseteq \lat$ represented by a collinear-preserving map $A$ is defined by
\begin{align}
    \ind_A = \{ X \in \lat \mid \dim \mathrm{span}(A(X)) = \dim X\},
\end{align}
where 
$\dim X$ is the dimension of vector space $X$, and $\mathrm{span}(Y)$ is the linear span of $Y \subseteq \R^d$.
Later, we see this definition generalizes the usual linear matroid.

The rank function $r_A$ of $\ind_A$ is given by
\begin{align}
    r_A(X) = \dim \mathrm{span}(A(X)).
\end{align}
We prove that $r_A$ satisfies (R1), (R2), and (R3).
The condition (R1) trivially holds.
The condition (R2) follows from the collinear-preserving property of $A$.
We finally prove (R3).
Let $X, Y \in \lat$ with $X \subseteq Y$ and $v \in \adm{Y}$.
If $r_A(Y \lor v) = r_A(Y)$, the downward DR-submodularity trivially holds.
Otherwise, there exists $v' \equiv v \bmod Y$ such that $\dim \mathrm{span}(A(Y \lor v')) = \dim \mathrm{span}(A(Y)) + 1$.
Because $X \subseteq Y$, we have $\dim \mathrm{span}(A(X \lor v')) = \dim \mathrm{span}(A(X)) + 1$, which implies the downward DR-submodularity. 

The linear supermatroid is a generalization of the linear matroid.
Let $V = \{1,2,\dots, d\}$.
For a set of linearly independent vectors $\{e_i \mid i \in V \}$, we define $\lat = \{\mathrm{span}(\{e_i \mid i \in S\}) \mid S \subseteq V\}$, which can be identified as $2^V$.
A collinear-preserving map $A$ yields a $k \times |V|$ matrix $A'$ as $A'_{ij}= A(e_i)_j$, where $x_j$ is the $j$-th component of $x \in \R^k$.
Then, the linear supermatroid $\ind_A$ is identified as a linear matroid represented by $A'$ over $2^V$.
\end{example}

\section{Supermatroids on Lower Locally Modular Lattices}
\label{sec:llm}

It is a natural question whether the relation between the axioms remains true on a class of lattices that is more general than the modular lattices.
Here, we consider lower-locally modular lattice, which is a common generalization of modular lattice and lower-locally distributive lattice.

We show that the independence axiom characterizes the matroids; however, all other axioms do not.
All proofs are given in Section~\ref{sec:proof}.

\subsection{Independence Axiom}

A supermatroid on lower locally modular lattices is also characterized by the independence axiom.
More generally, a supermatroid on arbitrary lattices is characterized by the independence axiom.
\begin{theorem}[Height Axiom $\Leftrightarrow$ Independence Axiom]
\label{thm:llm-height-ind}
Let $\mathcal{L}$ be a lattice.
\begin{itemize}
    \item[(1)] A supermatroid $\mathcal{I}$ on $\mathcal{L}$ satisfies (I1) and (I2).
    \item[(2)] Conversely, if $\mathcal{I} \subseteq \mathcal{L}$ satisfies (I1) and (I2), it is a supermatroid on $\mathcal{L}$.
\end{itemize}
\end{theorem}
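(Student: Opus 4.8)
The plan is to observe that the proof of Theorem~\ref{thm:modular-ind-height}, which establishes exactly this equivalence for modular lattices, never actually invokes modularity: it uses only the existence of the join $\lor$ and the fact that the height $|\cdot|$ is an integer-valued, strictly order-preserving function on a finite-length lattice. Consequently the same argument transfers to an arbitrary finite-length lattice $\lat$ (recall the whole paper restricts to finite length). I therefore recast that argument, taking $|X|$ to be the length of a \emph{longest} $\bot$-to-$X$ chain, as adopted in Section~\ref{subsec:preliminaries-lattice}, and check at each step that no semimodularity is needed.

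For part (1), suppose $\ind$ satisfies (H1) and (H2). Since (I1) and (H1) are the identical condition, only (I2) requires proof. Given $I_1, I_2 \in \ind$ with $|I_1| < |I_2|$, set $X := I_1 \lor I_2$ and consider $M_X := \{I \in \ind \mid I \le X\}$, which contains both $I_1$ and $I_2$. By (H2) all maximal elements of $M_X$ share a common height $c$. Because $I_2 \in M_X$ lies below some maximal element of $M_X$, monotonicity of height gives $c \ge |I_2| > |I_1|$, so $I_1$ cannot itself be maximal in $M_X$. Hence there is $J \in M_X$ with $I_1 < J$, and this $J$ satisfies $I_1 < J \le X = I_1 \lor I_2$ with $J \in \ind$, which is precisely (I2).

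For part (2), suppose $\ind$ satisfies (I1) and (I2); again (H1) is immediate. For (H2) I argue by contradiction: if for some $X$ the set $M_X := \{I \in \ind \mid I \le X\}$ had two maximal elements $I_1, I_2$ of distinct heights, say $|I_1| < |I_2|$, then (I2) would furnish $J \in \ind$ with $I_1 < J \le I_1 \lor I_2$. Since $I_1, I_2 \le X$ forces $I_1 \lor I_2 \le X$, we obtain $J \in M_X$ with $I_1 < J$, contradicting the maximality of $I_1$. Thus all maximal elements of $M_X$ have equal height, establishing (H2).

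The only point demanding care, and the reason this merits a separate statement rather than being folded into the modular case, is that outside the semimodular world the Jordan--Dedekind chain condition fails, so $|\cdot|$ must be fixed as the longest-chain length and one must confirm it is still strictly monotone, i.e. $A < B \Rightarrow |A| < |B|$, which follows by extending a longest chain to $A$ by one further step toward $B$. This monotonicity is the sole lattice-theoretic fact the argument consumes, and it holds in every finite-length lattice; I expect verifying it, together with confirming that neither direction secretly relies on any $\prec$-covering behavior special to modular lattices, to be the main (though minor) obstacle. No stronger structural property of $\lat$ is needed, which is exactly why the independence axiom, unlike the rank, middle, and dependence axioms of Section~\ref{sec:llm}, continues to characterize supermatroids on arbitrary lattices.
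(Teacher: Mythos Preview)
Your argument is correct and coincides with the paper's own proof: both directions are handled exactly as you describe, by taking $X = I_1 \lor I_2$ and using (H2) to find a strictly larger $J$ above $I_1$ in $\ind^X$, and conversely contradicting maximality via (I2). The paper in fact proves Theorems~\ref{thm:modular-ind-height} and~\ref{thm:llm-height-ind} simultaneously (it states ``It suffices to prove Theorem~\ref{thm:llm-height-ind}''), so your remark that the modular-case proof never invokes modularity is precisely the observation the authors make; your added check that $|\cdot|$ is strictly monotone in an arbitrary finite-length lattice is a welcome explicitness the paper omits.
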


In a modular lattice, the independence axiom can be replaced with a local version (I2l) as shown in Proposition~\ref{prop:localaug}.
However, (I2l) is too weak to characterize supermatroids on lower locally modular, and even worse, lower locally distributive lattices~\cite{fujishige2007matroids}.
We consider the lattice in Figure~\ref{fig:local-augmentation-lld}, where $\mathcal{I}$ is indicated by the black points in the figure.
This lattice is lower locally distributive, and  satisfies (I1) and (I2l).
However, it does not satisfy (H1); thus, it is not a supermatroid.

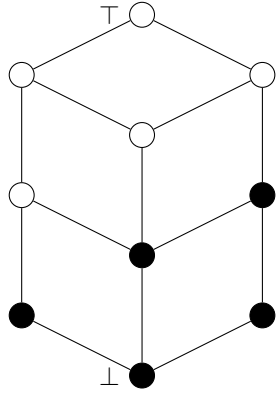
\begin{figure}[tb]
\centering
\begin{tikzpicture}[scale=0.8]
\node[draw, circle, fill, label=west:{$\bot$}] at (0,0) (bot) {};
\node[draw, circle, fill] at (0,2) (b1) {};
\node[draw, circle] at (0,4) (b2) {};
\node[draw, circle, fill] at (-2,1) (a1) {};
\node[draw, circle] at (-2,3) (a2) {};
\node[draw, circle] at (-2,5) (a3) {};
\node[draw, circle, fill] at (2,1) (c1) {};
\node[draw, circle, fill] at (2,3) (c2) {};
\node[draw, circle] at (2,5) (c3) {};
\node[draw, circle, label=west:{$\top$}] at (0,6) (top) {};
\draw[-] (bot)--(a1);
\draw[-] (bot)--(b1);
\draw[-] (bot)--(c1);
\draw[-] (a1)--(a2);
\draw[-] (a2)--(a3);
\draw[-] (c1)--(c2);
\draw[-] (c2)--(c3);
\draw[-] (b1)--(b2);
\draw[-] (b1)--(a2);
\draw[-] (b1)--(c2);
\draw[-] (b2)--(a3);
\draw[-] (b2)--(c3);
\draw[-] (top)--(a3);
\draw[-] (top)--(c3);
\end{tikzpicture}
\caption{Proposition~\ref{prop:localaug} does not hold on lower locally distributive lattice.}
\label{fig:local-augmentation-lld}
\end{figure}

Fujishige, Koshevoy, and Sano~\cite{fujishige2007matroids} called the structure satisfying (I1) and (I2) (applied to the lattice of convex sets) a strict cg-matroid,
and then  defined the following weaker version of the independence axiom to define a matroid on the lattice of convex sets.
\begin{definition}[Weak Independence Axiom] \ 
\begin{description}
\item[(I1)] $\mathcal{I}$ is a non-empty ideal.
\item[(I2w)] For any $I_1, I_2 \in \ind$ such that $I_2$ is maximal in $\ind$ and $|I_1| < |I_2|$, there exists $J$ such that $I_1 < J \le I_1 \lor I_2$.
\end{description}
\end{definition}
Clearly (I2) implies (I2w), but the converse does not hold in general.
Here, we show that the converse holds when the lattice is modular.
\begin{theorem}
\label{thm:weak-USM}
Let $\mathcal{I} \subseteq \lat$ satisfying (I1).
Then, (I2) and (I2w) is equivalent if $\lat$ is modular.
\end{theorem}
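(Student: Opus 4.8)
The direction (I2) $\Rightarrow$ (I2w) is immediate, since (I2w) is merely (I2) restricted to the pairs in which $I_2$ is maximal in $\ind$. The content is the converse, and the plan is to remove the maximality hypothesis by an induction that \emph{lifts} $I_2$ one step toward a base and then pulls the resulting augmentation back down using the modular rank valuation $|X \lor Y| + |X \land Y| = |X| + |Y|$, which holds because $\lat$ is modular.

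First I would record a consequence of (I2w): all maximal elements of $\ind$ (the bases) share a common height $\rho$. Indeed, if $B_1, B_2$ were bases with $|B_1| < |B_2|$, then (I2w) applied to $(B_1, B_2)$ would yield $J \in \ind$ with $B_1 < J$, contradicting the maximality of $B_1$. Hence for every $I_2 \in \ind$ the quantity $g := \rho - |I_2| \ge 0$ is well defined, and $I_2$ is a base precisely when $g = 0$.

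The main step is an induction on $g = \rho - |I_2|$ establishing (I2) for every pair with $|I_1| < |I_2|$. When $g = 0$, $I_2$ is a base and (I2w) applies directly. For $g \ge 1$, the element $I_2$ is not maximal, so I fix a base $B \ge I_2$ and a cover $I_2 \prec I_2^+ \le B$; then $I_2^+ \in \ind$ and $\rho - |I_2^+| = g-1$. Applying the induction hypothesis to $(I_1, I_2^+)$ and shrinking along the ideal (I1), I obtain $J \in \ind$ with $I_1 \prec J \le I_1 \lor I_2^+$. If $J \le I_1 \lor I_2$ we are done. Otherwise, writing $S := I_1 \lor I_2$ and $T := I_1 \lor I_2^+$, the facts $I_1 \prec J$ and $J \not\le S$ force $J \land S = I_1$, so the valuation gives $|J \lor S| = |J| + |S| - |J \land S| = |S| + 1$; combined with $J \lor S \le T$ and $|T| \le |S|+1$ this yields $J \lor S = T$ and $S \prec T$. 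Now I invoke the induction hypothesis a second time, on $(J, I_2^+)$ — legitimate since $|J| = |I_1|+1 < |I_2^+|$ and its gap is again $g-1$ — producing $J' \in \ind$ with $J \prec J' \le J \lor I_2^+ = T$. The payoff is $J' \land S$: it lies in $\ind$ and below $S = I_1 \lor I_2$, it contains $I_1$, and since $J \le J'$ gives $J' \lor S = T$, the valuation yields $|J' \land S| = |J'| + |S| - |T| = |I_1| + 1 > |I_1|$. Thus $I_1 < J' \land S \le I_1 \lor I_2$, exactly the augmentation demanded by (I2).

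The crux, and the place where modularity is genuinely needed, is the last meet-back maneuver. Lifting $I_2$ to $I_2^+$ is cheap, but it can push the augmenting element $J$ out of $I_1 \lor I_2$; the only recovery I see is to grow $J$ a second time inside $T$ and intersect with $S$. This uses the rank valuation in \emph{both} directions — the submodular bound to force $J \lor S = T$ (so that $|J' \lor S|$ is controlled from above) and the supermodular bound to force $|J' \land S| > |I_1|$ — so the exact equality, i.e.\ true modularity, is essential and a one-sided semimodular estimate would not suffice. I expect the only subtle bookkeeping to be checking that both nested inductive calls strictly decrease the gap $g$ and that the shrink-to-a-cover step along the ideal is applied consistently.
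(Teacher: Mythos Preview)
Your proof is correct and follows essentially the same route as the paper: reverse induction on the gap $\rho - |I_2|$, lifting $I_2$ one step to $I_2^+$, applying the induction hypothesis twice to build a two-step chain $I_1 \prec J \prec J'$ inside $T = I_1 \lor I_2^+$, and then intersecting with $S = I_1 \lor I_2$ to pull the augmentation back. The only cosmetic difference is that the paper phrases the height control as separate applications of upper and lower semimodularity (first using an auxiliary lemma that $X \prec Y$ implies $X \lor Z \preceq Y \lor Z$ to get $S \preceq T$, then lower semimodularity to get $S \land J' \prec J'$), whereas you invoke the modular rank identity $|X \lor Y| + |X \land Y| = |X| + |Y|$ directly in both directions; on a modular lattice these are interchangeable.
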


\begin{remark}
(I2w) and (I2l) are not equivalent because the example in Figure~\ref{fig:local-augmentation-lld} satisfies (I2l) but not (I2w).
\end{remark}

\subsection{Middle Axiom (Base Axiom)}

If the underlying lattice is a lower-locally distributive lattice, the middle axiom does not characterize matroids.
More precisely, we prove that the middle axiom is equivalent to the \emph{weak} independence axiom in general lattices.

\begin{theorem}[Middle Axiom $\Leftrightarrow$ Weak Independence Axiom]
\label{thm:llm-base-wind}
Let $\mathcal{L}$ be a lattice and $\mathcal{I} \subseteq \lat$ satisfying (I1) and (I2w).
Then, the set of maximal elements $\mathcal{B}$ of $\mathcal{I}$ satisfies (B1) and (B2).
Conversely, if $\mathcal{B}$ satisfies (B1) and (B2), then $\mathcal{I} = \{ I \in \mathcal{L} \mid \exists B \in \mathcal{B}, I \le B \}$ satisfies (I1) and (I2w).
\end{theorem}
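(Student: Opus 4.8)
The plan is to prove both implications by identifying the family $\base$ of the middle axiom with the set of maximal elements of $\ind$, and then transferring the augmentation between the two formulations through the ``sandwich'' property (B2). At the outset I would record the elementary fact that, when $\ind$ is built from $\base$ by $\ind = \{I \in \lat \mid \exists B \in \base,\ I \le B\}$, the maximal elements of $\ind$ are exactly the members of $\base$: each $B \in \base$ is maximal by (B1), and each maximal element lies below some $B \in \base$ and hence equals it, again by (B1). I would also note that (for the forward direction) all maximal elements of $\ind$ have a common height: if two maximal elements had different heights, applying (I2w) to the smaller one as $I_1$ and the larger one as the maximal $I_2$ would produce $J \in \ind$ with $I_1 < J$, contradicting the maximality of $I_1$.

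For the forward implication, (B1) is immediate, since distinct maximal elements are incomparable. For (B2), given $B_1, B_2 \in \base$ with $X \le B_1$ and $B_2 \le Y$, I would treat the only non-vacuous case $X \le Y$ (the conclusion $X \le B \le Y$ forces this). By (I1) we get $X \in \ind$, and I would take $B$ to be a maximal element of $\ind \cap [X,Y]$, which is nonempty as it contains $X$. If $B$ were not globally maximal, then $|B|$ would be strictly below the common base height $|B_2|$, so (I2w) applied to the pair $(B, B_2)$ yields $J \in \ind$ with $B < J \le B \lor B_2 \le Y$; since also $X \le B < J$, this $J$ lies in $\ind \cap [X,Y]$ and strictly exceeds $B$, a contradiction. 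Hence $B \in \base$ with $X \le B \le Y$, establishing (B2).

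For the converse, (I1) is immediate: $\ind$ is downward closed by construction and nonempty as soon as $\base$ is. To prove (I2w), take $I_1, I_2 \in \ind$ with $I_2$ maximal, so $I_2 \in \base$ and $|I_1| < |I_2|$, and pick $B_1 \in \base$ with $I_1 \le B_1$. Applying (B2) to the pair $(B_1, I_2)$ with endpoints $X = I_1$ and $Y = I_1 \lor I_2$ yields $B \in \base$ with $I_1 \le B \le I_1 \lor I_2$; setting $J = B$ will complete (I2w) once we know the inclusion is strict.

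The hard part will be exactly this strictness, i.e. ensuring $B \ne I_1$. This is equivalent to excluding the possibility that $I_1$ is itself a base of height strictly below $I_2$, which in turn amounts to showing that (B1) and (B2) alone force all members of $\base$ to have the same height. In the forward direction this came for free from (I2w), but in the converse it must be extracted from the sandwich axiom, and this is the genuinely delicate step: the relevant statement is precisely Lemma~\ref{lem:base-sameheight}, whose argument leans on the lower-semimodular structure. I would therefore attempt to establish equal height in the converse by interpolating along a maximal chain between two bases with repeated applications of (B2), and I expect the crux of the whole proof — and the point where the ambient lattice structure is really needed — to be verifying that such interpolation cannot ``skip'' a height, since the sandwich property by itself does not control the heights of the sandwiched bases.
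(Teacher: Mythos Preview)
Your proposal is correct and follows essentially the same route as the paper: both directions reduce to a sandwich application of (B2) (the paper grows $B$ from $X$ iteratively via (I2w), you take a maximal element of $\ind \cap [X,Y]$ and argue by contradiction, which is the same idea), and you have correctly isolated the one nontrivial step in the converse as the equal-height Lemma~\ref{lem:base-sameheight}, proved exactly by the chain-interpolation with lower semimodularity that you sketch. Your observation that the statement tacitly needs lower semimodularity for this lemma (and, relatedly, that one must sit in the case $X\le Y$) is accurate and matches what the paper actually uses in its proof.
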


\subsection{Rank Axiom}

The rank axiom does not characterize matroids if an underlying lattice is lower-locally modular (or lower-locally distributice); 
the counterexample is given in Example~\ref{ex:lld-counterexample}.

\begin{example}
\label{ex:lld-counterexample}
\begin{figure}
    \centering
    \begin{tikzpicture}
    \node at (0,0) (0) {$\bot$};
    \node at (0,2) (d) {d};
    \node at (0,4) (e) {e};
    \node at (0,6) (1) {$\top$};
    \node at (-2,1) (a) {a};
    \node at (-2,3) (b) {b};
    \node at (-2,5) (c) {c};
    \node at (2,1) (f) {f};
    \node at (2,3) (g) {g};
    \node at (2,5) (h) {h};
    \draw (0)--(d);
    \draw (d)--(e);
    \draw (0)--(a);
    \draw (0)--(f);
    \draw (d)--(b);
    \draw (d)--(g);
    \draw (e)--(c);
    \draw (e)--(h);
    \draw (a)--(b);
    \draw (b)--(c);
    \draw (f)--(g);
    \draw (g)--(h);
    \draw (c)--(1);
    \draw (h)--(1);
    \end{tikzpicture}
    \caption{Counterexample: Local augumentation  (I2l) does not imply (I2) on lower locally modular lattices.}
    \label{fig:lld-counterexample}.
\end{figure}
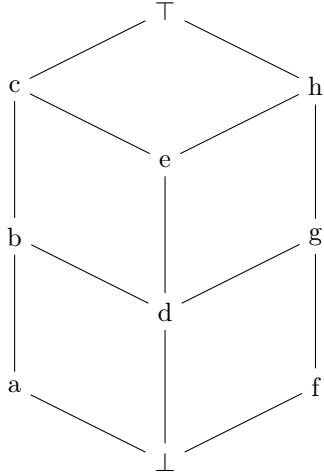

Consider the lattice shown in Figure~\ref{fig:lld-counterexample}.
We can easily check that this lattice is lower locally distributive.
Consider a rank function $r$ defined by $r(\perp) = 0$,
$r(a) = r(d) = r(f) = 1$,
$r(b) = r(e) = r(g) = r(h) = 2$, and
$r(c) = r(\top) = 3$.
Then, $r$ satisfies the rank axiom and the rank of the supermatroid is 3.
However, $g$ is a height-2 maximal element of the supermatroids, which contradicts (H2).
\end{example}

We prove that the independence axiom implies the rank axiom if the lattice is lower-semimodular, and the converse holds if the lattice is upper-semimodular.

\begin{theorem}[Independece Axiom $\Rightarrow$ Rank Axiom  on LSM; Rank Axiom $\Rightarrow$ Independence Axiom on USM] \ 
\label{thm:ind-rank}
\begin{itemize}
    \item[(1)] Let $\ind$ be a supermatroid on a lower semimodular lattice. Then, $r(X) := \min \{|I| \mid I \le X, I \in \ind\}$ satisfies (R1), (R2), and (R3).
    \item[(2)] Conversely, let $r$ be a function over an upper semimodular lattice satisfying (R1), (R2), and (R3). Then, $\ind := \{X \in \lat \mid r(X) = |X|\}$ is a supermatroid.
\end{itemize}
\end{theorem}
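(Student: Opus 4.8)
The plan is to prove the two directions separately, mirroring the structure already established for the modular case (Theorem~\ref{thm:ind-rank-mod}) but carefully tracking where modularity was used and replacing it by the one-sided semimodular hypotheses. For part~(1), I would first verify (R1) and (R2), which are purely local: (R1) is immediate since $\bot \in \ind$ gives $r(\bot) = 0$, and (R2) follows because adjoining a single cover can raise the maximal independent height by at most one, and the Jordan--Dedekind property in a lower semimodular lattice guarantees that heights are well-defined along any maximal chain. The substantive content is (R3), the downward DR-submodularity. Here I would take $X \le Y$ in $\lat$ and an admissible $b \in \adm{Y}$, and use the height axiom (H2) together with the augmentation property (I2) — valid by Theorem~\ref{thm:llm-height-ind} on any lattice — to locate a maximal independent set $I \in \ind^Y$ and analyze how it extends into $\ind^{Y \lor b}$. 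The key is to produce the modular-complement element $b' = b \bmod Y$ witnessing the DR-inequality: I would lift a maximal independent set below $X$ to one below $Y$, compare their ranks, and exploit lower semimodularity to ensure that a cover relation above $Y$ descends to a cover relation above $X$, so that the marginal gain $r(Y \lor b) - r(Y)$ is dominated by $r(X \lor a) - r(X)$ for a suitable $a \le b'$.

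For part~(2), I would assume $r$ satisfies (R1), (R2), (R3) on an upper semimodular lattice and show $\ind := \{X \mid r(X) = |X|\}$ is a supermatroid by verifying (I1) and (I2), invoking Theorem~\ref{thm:llm-height-ind} to conclude it is a supermatroid. For (I1), note $r(\bot) = 0 = |\bot|$, so $\ind$ is nonempty; the ideal property follows because (R2) forces $r$ to increase by at most one per cover while $|X|$ increases by exactly one, so if $r(X) = |X|$ and $X' \prec X$ then $r(X') = |X'|$ as well (otherwise the gap could not close). For (I2), given $I_1, I_2 \in \ind$ with $|I_1| < |I_2|$, I would argue by contradiction: if no augmentation of $I_1$ inside $I_1 \lor I_2$ lands in $\ind$, then $r$ fails to grow along every cover above $I_1$ toward $I_1 \lor I_2$, and I would use upper semimodularity to propagate these ``rank-stalling'' covers upward, together with the upward DR-submodularity equivalent form (R3u) available by Proposition~\ref{prop:rank-upward}, to deduce $r(I_1 \lor I_2) \le |I_1| < |I_2| \le r(I_1 \lor I_2)$, a contradiction. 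Upper semimodularity is exactly what lets a cover $X \prec X \lor Y$ be obtained from $X \land Y \prec X$, which is the mechanism for transporting marginal rank information in the correct direction.

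The main obstacle I anticipate is the asymmetry between the two hypotheses: downward DR-submodularity (R3) is a statement about pushing marginals \emph{downward} in the lattice, which naturally pairs with \emph{lower} semimodularity for direction~(1), whereas recovering augmentation in direction~(2) requires moving information \emph{upward}, hence \emph{upper} semimodularity. The delicate point is that in the modular proof both semimodularities are available simultaneously, and the argument freely uses $X \prec X \lor Y \iff X \land Y \prec Y$ in both directions; here I must be scrupulous to invoke only the one implication each hypothesis provides. In particular, for (R3) under lower semimodularity I cannot assume that a cover below descends from a cover above without justification, so the construction of the witness $b'$ and the choice of $a$ must rely solely on the lower semimodular implication $X \prec X \lor Y \Rightarrow X \land Y \prec Y$, and symmetrically for part~(2). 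Verifying that the existential witness in the definition of downward DR-submodularity can always be produced under only the weaker one-sided hypothesis — rather than the full modular identity — is where I expect the technical heart of the proof to lie, and where Proposition~\ref{prop:rank-upward} (relating (R3) and (R3u)) will be the essential bridge.
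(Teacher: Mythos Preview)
Your plan for part~(1) is essentially the paper's approach: build a chain of maximal independent sets $I_0 \preceq I_1 \preceq \cdots \preceq I_{k+1}$ tracking a lattice chain $X = X_0 \prec \cdots \prec X_k = Y \prec Y \lor b$ via repeated use of (I2), and extract the witness $b'$ from the join-irreducible that carries $I_k$ to $I_{k+1}$. Lower semimodularity enters only through Lemma~\ref{lem:prec}, exactly as you anticipate.

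Your plan for part~(2), however, has a genuine gap. You propose to invoke Proposition~\ref{prop:rank-upward} to pass from (R3) to (R3u) and then run a contradiction argument propagating ``rank-stalling'' covers upward. But Proposition~\ref{prop:rank-upward} is proved in the paper \emph{via} the dual matroid and Proposition~\ref{prop:dualrank}: one first needs to know that $r$ is the rank function of a supermatroid, which is precisely Theorem~\ref{thm:ind-rank-mod}(2), itself a specialization of Theorem~\ref{thm:ind-rank}(2). Using it inside the proof of Theorem~\ref{thm:ind-rank}(2) is circular. Moreover, that proof is carried out in the modular setting (the height identity in Proposition~\ref{prop:dualrank} uses modularity), so it is not available on a bare upper semimodular lattice anyway.

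The detour is also unnecessary. Downward DR-submodularity (R3) already points in the right direction for augmentation: take any maximal chain $I_1 = X_0 \prec \cdots \prec X_k = I_1 \lor I_2$; since $r(I_1 \lor I_2) \ge |I_2| > |I_1| = r(I_1)$, some step has $r(X_{i+1}) - r(X_i) = 1$, and (R3) pulls this increment \emph{down} to $I_1$, producing $w \in \adm{I_1}$ with $w \le X_{i+1} \le I_1 \lor I_2$ and $r(I_1 \lor w) = r(I_1) + 1$. Upper semimodularity is used exactly once, to convert $w \in \adm{I_1}$ into the cover $I_1 \prec I_1 \lor w$, whence $|I_1 \lor w| = |I_1| + 1 = r(I_1 \lor w)$ and $I_1 \lor w \in \ind$. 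No contradiction, no (R3u), no duality is needed.
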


We tried to characterize the rank function of supermatroids on lower-locally modular lattices by modifying (R3). 
We tried the followings attempts.
\begin{itemize}
    \item Replace the downward-DR submodularity to the upward-DR submodularity or the bidirectional DR-submodularity.
    \item Replace the downward-DR submodularity to a modified version of the downward-DR submodularity.
\end{itemize}
Unfortunately, both attempts failed;
hence, it is an open problem to characterize the rank function of lower-locally modular lattices in terms of a submodular-like property.

Below, we explain the failure of our attempts.
The first modification ensures that the rank axiom implies the independence axiom
because of Theorem~\ref{thm:ind-rank};
however, the converse does not hold; see the following example.

\begin{example}
Consider a lower locally distributive lattice $\lat$ shown in Figure~\ref{fig:counterexample-upward}.
The elements in $\ind$ are shown by the black dots.
Although $\ind$ is a supermatroid, the rank function $r$ is not upward DR-submodular.
By taking $Y \lor b := X$, $Y := W$, and $X := Y \lor q$ in~(\ref{eq:downwardDR}), we can easily see that $r$ is not downward DR-submodular on $\lat^*$. 
\begin{figure}
    \centering
    \begin{tikzpicture}
    \node[draw,circle,fill] at (0,0) (bot) {};
    \node[draw,circle,label=west:{$X$}] at (-2,1) (1) {};
    \node[draw,circle,label=north:{$p$}] at (0,2) (2) {};
    \node[draw,circle,fill] at (2,1) (3) {};
    \node[draw,circle, label=west:{$W$}] at (-2,3) (4) {};
    \node[draw,circle,label={$Y$}] at (0,4) (5) {};
    \node[draw,circle] at (2,3) (6) {};
    \node[draw,circle,fill,label=east:{$q$}] at (4,2) (7) {};
    \node[draw,circle] at (4,4) (8) {};
    \node[draw,circle,label={$Y \lor q$}] at (2,5) (9) {};
    
    \draw[-] (bot)--(1);
    \draw[-] (bot)--(2);
    \draw[-] (bot)--(3);
    \draw[-] (1)--(4);
    \draw[-] (3)--(6);
    \draw[-] (2)--(4);
    \draw[-] (2)--(6);
    \draw[-] (4)--(5);
    \draw[-] (6)--(5);
    \draw[-] (3)--(7);
    \draw[-] (6)--(8);
    \draw[-] (7)--(8);
    \draw[-] (9)--(8);
    \draw[-] (9)--(5);
    \end{tikzpicture}
    \caption{Counterexample: A rank function is not upward DR-submodular. The elements in $\ind$ is shown by black dots.}
    \label{fig:counterexample-upward}
\end{figure}
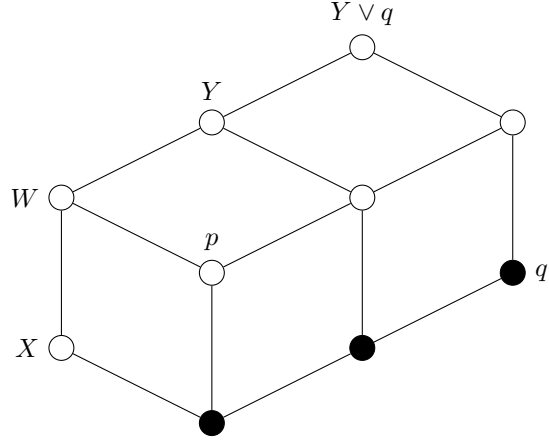
\end{example}

For the second modification, we introduce the following strengthening version of the DR-submodularity. 
We say that $p \in \mathcal{L}$ is a \emph{co-extreme point} if $p$ is join-irreducible and $X \lor p \succ X$.
We denote by $\coex{X}$ the set of co-extreme points.

\begin{definition}[Downward DR-Submodular']
A function is downward DR-submodular' if for all $X, Y \in \mathcal{L}$ and $q \in \coex{Y}$, 
\begin{align}
\label{eq:downwardDR}
    f(Y \lor q) - f(Y) \le \max_{\substack{Z \equiv q \bmod Y \\ \coex{X} \cap \{ p : p \le Z \} \neq \emptyset}} \min_{p \in \coex{X}, p \le Z} f(X \lor p) - f(X).
\end{align}
\end{definition}
This definition is distinguished from the original by the prime, and is equivalent to the original definition on modular lattices.

\begin{lemma}
\label{lem:coexDR-equiv-DR-modular}
Let $\mathcal{L}$ be a modular lattice.
The downward DR-submodularity' is then equivalent to the downward DR-submodularity.

\end{lemma}

We replace the (R3) in the rank axiom by the following (R3s):
\begin{description}
\item[(R3s)] $\rho$ is a downward DR-submodular' function.
\end{description}
Then, the modified properties implies the independence axiom.
\begin{theorem}
\label{thm:coexDR-imply-Ind}
Let $\mathcal{L}$ be a lower locally modular lattice, and $\mathcal{I}$ be a supermatroid on $\mathcal{L}$.
Then, if a function $\rho \colon \mathcal{L} \to \mathbb{R}$ satisfies (R1), (R2), and (R3s), then $\mathcal{I} := \{ X \in \mathcal{L} : \rho(X) = |X| \}$ forms a supermatroid.
\end{theorem}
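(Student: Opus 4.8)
The plan is to mimic the strategy used for Theorem~\ref{thm:ind-rank-mod}(2), namely to show that $\ind := \{X \in \lat : \rho(X) = |X|\}$ satisfies the independence axiom (I1) and (I2), since by Theorem~\ref{thm:llm-height-ind} this suffices to conclude that $\ind$ is a supermatroid. The verification of (I1) should be routine: $\rho(\bot) = 0 = |\bot|$ by (R1), so $\bot \in \ind$ and $\ind$ is non-empty; and (R2) together with the fact that $\rho(X) - \rho(X') \in \{0,1\}$ along any covering step forces $\rho(X) \le |X|$ everywhere, so that $X \in \ind$ means $\rho$ increases by exactly $1$ on every covering step of a maximal chain up to $X$, whence any $X' \le X$ also saturates, giving the ideal property.

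\emph{First} I would reduce to the augmentation statement (I2): given $I_1, I_2 \in \ind$ with $|I_1| < |I_2|$, I must produce $J \in \ind$ with $I_1 < J \le I_1 \lor I_2$. The natural approach is to localize, choosing a covering step $Y \prec Y \lor q$ inside the interval below $I_2$ (so $q$ is a co-extreme point of some intermediate element) where the rank genuinely increases, and then transport this increase down to $I_1$ using (R3s). The point of working with the primed downward DR-submodularity (R3s) rather than (R3) is precisely that on a lower locally modular lattice the co-extreme points $\coex{\cdot}$ are the correct local handles: the inequality~(\ref{eq:downwardDR}) lets me pick a representative $Z \equiv q \bmod Y$ together with a co-extreme point $p \in \coex{I_1}$, $p \le Z$, for which the marginal gain $\rho(I_1 \lor p) - \rho(I_1)$ is at least the gain $\rho(Y \lor q) - \rho(Y) = 1$. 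Setting $J := I_1 \lor p$ then yields $I_1 \prec J$ with $\rho(J) - \rho(I_1) = 1$, so $J$ saturates its rank and lies in $\ind$, and $p \le Z \equiv q \bmod Y$ keeps $J$ below $I_1 \lor I_2$.

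\emph{The hard part} will be the bookkeeping that guarantees $J \le I_1 \lor I_2$ rather than merely $J \le I_1 \lor Z$ for some unrelated representative $Z$, since (R3s) only grants the existence of \emph{some} $Z \equiv q \bmod Y$ realizing the outer maximum, and a priori this $Z$ need not sit inside $I_1 \lor I_2$. I expect to resolve this by choosing the covering step $Y \prec Y \lor q$ and the co-extreme point $q$ \emph{inside} $I_1 \lor I_2$ from the outset (exploiting that $|I_2| > |I_1|$ forces a rank-increasing step in a maximal chain from $I_1 \lor (I_1 \land I_2)$ up to $I_1 \lor I_2$), and then arguing that congruence modulo $Y$ confines the relevant representative to the interval of interest — this is where lower local modularity is essential, as it makes the interval $[Y^-, Y \lor q]$ modular and hence forces the admissible/co-extreme transport to stay coherent.

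A second, more delicate obstacle is that a single application of the augmentation step only advances $I_1$ by one cover, so to reach the full conclusion $|I_1| < |I_2|$ I would iterate, raising $I_1$ one cover at a time while maintaining the invariant $J \le I_1 \lor I_2$ and $J \in \ind$, terminating once heights match. I would present the single-step augmentation as the core lemma and then a short induction on $|I_2| - |I_1|$ to finish; throughout, I would invoke Lemma~\ref{lem:coexDR-equiv-DR-modular} only to reassure the reader that on genuinely modular intervals this primed condition is nothing new, while emphasizing that it is the primed form that survives the passage to lower locally modular lattices.
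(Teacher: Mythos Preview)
Your plan is essentially the paper's proof, and it works: verify (I1) from (R1)--(R2), then for (I2) take a maximal chain from $I_1$ to $I_1\lor I_2$, find a covering step where $\rho$ increases, and apply (R3s) with $X=I_1$ and $Y=J_i$ to extract $p\in\coex{I_1}$ with $\rho(I_1\lor p)=|I_1|+1$.

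Two points where you are overcomplicating. First, your ``hard part'' is not hard: the congruence $Z\equiv q\bmod Y$ means $Z\lor Y = q\lor Y$, so automatically $Z\le Y\lor q = J_{i+1}\le I_1\lor I_2$; hence $p\le Z\le I_1\lor I_2$ and $J=I_1\lor p\le I_1\lor I_2$ with no further appeal to lower local modularity at this step. The paper simply takes the chain from $I_1$ to $I_1\lor I_2$ (not ``below $I_2$'' as you first wrote; your later formulation with $I_1\lor(I_1\land I_2)=I_1$ is the correct one) and the containment is immediate. Second, your proposed induction on $|I_2|-|I_1|$ is unnecessary: (I2) asks only for \emph{some} $J$ with $I_1<J\le I_1\lor I_2$, so one augmentation step already finishes the proof, exactly as the paper does.
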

However, the converse does not hold as shown in the following example.
\begin{example}
Consider the same example in the previous example (Figure~\ref{fig:lld-counterexample}).
Recall that this lattice is lower locally distributive, and $\ind$ shown by black dots is a supermatroid.
The rank function is not downward DR-submodular'.
Take $X, Y, p, q$ as in Figure~\ref{fig:lld-counterexample}.
Then, $\coex{X} = \{p\}$ and $f(X \lor p) - f(X) = 0$,
i.e., the independence axiom does not implies the (replaced version of the) rank axiom.
\end{example}

These results indicate that it is difficult to characterize the rank function of a supermatroid over a lower locally modular lattice by submodularity.

Note that if we do not stick to the submodularity of the rank function, we can define the following  ``rank axiom'' that characterize matroids.
Note that this definition is motivated by Sano~\cite[Theorem~1.2]{sano2008rank}, which is a characterization of matroids on lower-locally distributive lattices.

\begin{definition}[Rank Axiom (without Submodularity)] \ 
\begin{description}
\item[(R1)] $\rho(\bot) = 0$
\item[(R2)] $\rho(X \lor p) - \rho(X) \in \{0, 1\}$ for all $p \in \coex{X}$.
\item[(R3')] For any $X, Y \in \mathcal{L}$ such that $X \subseteq Y$ and $\rho(X) =|X| < \rho(Y)$,
    there exists $e \in \coex{X} \land Y$ such that $\rho(X \cup \{e\}) = \rho(X) + 1$.
\end{description}
\end{definition}
\begin{theorem}
\label{thm:sanotype-rank-to-ind}
Let $\ind$ be a supermatroid on lower locally modular lattice $\lat$.
Then, $r(X) := \max\{|I| \mid I \in \ind, I \le X\}$ satisfies (R1), (R2), and (R3').
Conversely, let $r \colon \lat \to \mathbb{Z}$ be a function satisfying (R1), (R2), and (R3').
Then, $\ind := \{ X \in \lat \mid r(X) = |X|\}$ satisfies (I1) and (I2).
\end{theorem}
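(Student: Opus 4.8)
The plan is to reduce everything to the independence axiom, which by Theorem~\ref{thm:llm-height-ind} is equivalent to being a supermatroid on any lattice; thus it suffices to show that (R1),(R2),(R3') are equivalent to (I1),(I2). Throughout I would use that a lower locally modular lattice is lower semimodular (Figure~\ref{fig:lattices}), so the Jordan--Dedekind property holds and the height $|\cdot|$ is well defined. The one technical device I would isolate first is the dictionary between covers and co-extreme points: in a finite lattice every element is the join of the join-irreducibles below it, so whenever $X \prec X''$ there is a join-irreducible $e \le X''$ with $e \not\le X$, and minimality of the cover forces $X \lor e = X''$, whence $e \in \coex{X}$ with $e \le X''$. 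Conversely each $p \in \coex{X}$ produces the cover $X \prec X \lor p$. This lets me pass freely between the cover-based reasoning natural to supermatroids and the co-extreme-point language of (R2),(R3').

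For direction (1) I take $\ind$ a supermatroid and $r(X)=\max\{|I|\mid I\in\ind,\ I\le X\}$. (R1) is immediate since $\bot\in\ind$. For (R2), given $p\in\coex{X}$ I use monotonicity of $r$ for the lower bound, and for the upper bound I pick $I\in\ind$ with $I\le X\lor p$ and $|I|=r(X\lor p)$; since $X\prec X\lor p$, either $I\le X$ (giving equality) or $X\lor I=X\lor p$, and then lower semimodularity yields $X\land I\prec I$, so $r(X)\ge|X\land I|=|I|-1$ because $X\land I\in\ind$. For (R3'), suppose $X\le Y$ with $r(X)=|X|<r(Y)$; then $X\in\ind$ and some $J\in\ind$ with $J\le Y$ realizes $r(Y)>|X|$. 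Applying (I2) to $X,J$ gives $Z\in\ind$ with $X<Z\le X\lor J\le Y$; routing through a cover $X\prec Z_1\le Z$ (with $Z_1\in\ind$ since $\ind$ is an ideal) and applying the dictionary, I obtain $e\in\coex{X}$ with $e\le Z_1\le Y$ and $X\lor e=Z_1\in\ind$, so $r(X\lor e)=|X|+1=r(X)+1$.

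For direction (2) I take $r$ satisfying (R1),(R2),(R3') and set $\ind=\{X\mid r(X)=|X|\}$. Using the dictionary, (R2) applies to every cover, so along any maximal chain from $\bot$ the value of $r$ starts at $0$ and increases by at most $1$ at each step; hence $r$ is monotone and $r(X)\le|X|$ for all $X$. This gives (I1): $\bot\in\ind$ by (R1), and if $X\in\ind$ and $X'\le X$, a maximal chain from $\bot$ to $X$ through $X'$ has length $|X|$, so since $r$ rises by $|X|$ over $|X|$ unit-bounded steps every step is exactly $+1$, forcing $r(X')=|X'|$, i.e. $X'\in\ind$. Finally (I2) follows directly from (R3'): for $I_1,I_2\in\ind$ with $|I_1|<|I_2|$, monotonicity gives $r(I_1\lor I_2)\ge r(I_2)=|I_2|>|I_1|=r(I_1)$, so (R3') with $X=I_1$, $Y=I_1\lor I_2$ produces $e\in\coex{I_1}$, $e\le I_1\lor I_2$, with $r(I_1\lor e)=|I_1|+1=|I_1\lor e|$; thus $J:=I_1\lor e\in\ind$ satisfies $I_1<J\le I_1\lor I_2$.

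I expect the cover/co-extreme dictionary and the single application of lower semimodularity in the (R2) bound to be the only nonroutine points; everything else is bookkeeping with the height function. It is worth noting that, unlike the submodular rank axiom of Theorem~\ref{thm:ind-rank} whose converse needs upper semimodularity, direction (2) here uses no upper semimodularity, because (R3') is an augmentation-type statement that yields (I2) directly — this is precisely why this axiom survives on the larger class of lower locally modular lattices.
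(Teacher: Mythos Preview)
Your proof is correct and follows essentially the same approach as the paper's. The paper's proof is just two sentences: it says that (R1) and (R2) together are equivalent to (I1) ``by the same proof as Theorem~\ref{thm:ind-rank}'', and that (R3') is equivalent to (I2); you have simply written out those details explicitly, including the cover/co-extreme dictionary (which is the content of Lemma~\ref{lem:existence-of-join-irreducibles}) and the lower-semimodularity step in the (R2) bound (which is Lemma~\ref{lem:prec}).
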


\subsection{Dependendence Axiom}

The dependence axiom does not characterize matroids if an underlying lattice is lower-locally modular.
We prove that the dependence axiom is equivalent to the ``local'' independence axiom.

\begin{theorem}[Dependence Axiom $\Leftrightarrow$ Local Independence Axiom]
\label{thm:llm-dep-ind}
\begin{itemize}
\item[(1)] Let $\ind$ be a supermatroid. Then, $\dep = \lat \setminus \ind$ satisfies (D1), (D2), and (D3).
\item[(2)] Conversely, let $\dep \subseteq \lat$ satisfying (D1), (D2), and (D3). Then, $\ind := \lat \setminus \dep$ satisfies (I1) and (I2l).
\end{itemize}
\end{theorem}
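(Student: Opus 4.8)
The plan is to handle both directions by \emph{localizing} to intervals of small height, where modularity is available for free, and then to rerun the argument already used for modular lattices (Theorem~\ref{thm:mod-dep-ind}). The only genuinely new feature is that the converse direction can extract the \emph{local} augmentation (I2l) but not the global (I2); this is exactly what one should expect, since Proposition~\ref{prop:localaug} fails on lower locally modular lattices (Figure~\ref{fig:local-augmentation-lld}), so no stronger conclusion is available.

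For part (1), I would first invoke Theorem~\ref{thm:llm-height-ind}: because $\ind$ is a supermatroid, it satisfies the full augmentation (I1) and (I2), which are therefore usable even though $\lat$ is only lower locally modular. Axiom (D1) is immediate, as $\ind$ is a non-empty ideal containing $\bot$, so $\dep=\lat\setminus\ind$ is a proper filter. For (D2), the point is that $D_1,D_2,Z$ are all lower covers of $D_1\lor D_2$, hence all lie in $M=[(D_1\lor D_2)^-,\,D_1\lor D_2]$, which is modular by the LLM property; inside $M$ the height identity gives $|D_1\land D_2|=|D_1\lor D_2|-2<|Z|$, and $D_1\land D_2\not\le Z$ forces $(D_1\land D_2)\lor Z=D_1\lor D_2$. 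Assuming the negations of options (1) and (2), i.e. $Z,\,D_1\land D_2\in\ind$, I apply (I2) to $(D_1\land D_2,Z)$ and use $D_1\lor D_2\in\dep$ to pin the output to $D_1\land D_2\prec I\prec D_1\lor D_2$ with $I\in\ind$, which is option (3). For (D3) I read the hypothesis as saying that $[X,X\lor W]$ is the chain $X\prec Y\prec X\lor W$ with $Y\in\dep$; assuming $X,W\in\ind$, augmentation (I2) applied to $(X,W)$ yields $I\in\ind$ strictly between $X$ and $X\lor W$, and since $X\lor W\in\dep$ this forces $I=Y\in\dep$, a contradiction. Apart from the explicit passage to $M$, this is the proof of Theorem~\ref{thm:mod-dep-ind}(1) transported verbatim.

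For part (2), (I1) follows from (D1), and the work is in (I2l). Fix $I_1,I_2\in\ind$ with $|I_1|+1=|I_2|$ and $I_2\prec I_1\lor I_2$. Then $[I_1,I_1\lor I_2]$ has height two, hence is automatically modular (its intermediate elements are pairwise incomparable atoms), and I need only locate one of its atoms in $\ind$. If the interval is a chain with unique intermediate element $W$, then either $W\in\ind$ (take $Z=W$) or $W\in\dep$, in which case (D3), applied with its $(X,W,Y)$ set to $(I_1,I_2,W)$, gives $I_1\in\dep$ or $I_2\in\dep$, contradicting $I_1,I_2\in\ind$. If there are at least two atoms, either one lies in $\ind$ (done) or all lie in $\dep$; choosing two such atoms $W_1,W_2$ and applying (D2) with $Z:=I_2$, and using $W_1\land W_2=I_1$, $W_1\lor W_2=I_1\lor I_2$, together with $I_1\notin\dep$, $I_2\notin\dep$ and $I_1\not\le I_2$, rules out options (1) and (2) and forces option (3), i.e. an $\ind$-atom strictly between $I_1$ and $I_1\lor I_2$, which is the desired $Z$. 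This mirrors the case analysis of Theorem~\ref{thm:mod-dep-ind}(2), but deliberately stops at (I2l), since Proposition~\ref{prop:localaug} is not available to bootstrap to (I2).

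The main obstacle is conceptual rather than computational: correctly isolating the \emph{local} content of (D2) and (D3) and matching it exactly against (I2l). The delicate steps are (i) verifying that the elements appearing in (D2) and in the (I2l) argument genuinely lie in one modular interval (for (D2) because they are lower covers of a common top; for (I2l) because a height-two interval is modular with no LLM hypothesis needed at all), and (ii) choosing the right variable substitutions when invoking (D2) and (D3), whose hypotheses are asymmetric in $D_1\land D_2$ versus $D_1\lor D_2$ and in $X$ versus $W$. Once these are settled, the proof is a faithful interval-localized rerun of the modular argument, with the single essential difference that the converse delivers only the local augmentation axiom.
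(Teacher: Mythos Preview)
Your proposal is correct and follows essentially the same approach as the paper: direction (1) invokes the global augmentation (I2) available via Theorem~\ref{thm:llm-height-ind} together with lower semimodularity (you phrase it via the modular interval $[(D_1\lor D_2)^-,D_1\lor D_2]$, the paper uses LSM directly), and direction (2) is the identical two-case analysis on the height-two interval $[I_1,I_1\lor I_2]$, using (D3) when it is a chain and (D2) when it has at least two atoms. The only differences are cosmetic (your explicit localization to a modular interval versus the paper's bare use of lower semimodularity), and your observation that (I2l) is the sharpest possible conclusion here is exactly the point the paper is making.
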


\section{Proofs}
\label{sec:proof}

In this section, we give the omitted proofs.

\subsection{Properties of  Lattices}
Here, we summarize the properties of the lattices that are necessary for better understanding the following proofs.
We state that $p \in \mathcal{L}$ is an \emph{extreme point} if $p$ is a join-irreducible and there exists $\mathring{X} \prec X$ such that $X = \mathring{X} \lor p$. 
We denote by $\ex{X} = \{ (x, \mathring{B}) \mid \mathring{B} \prec \mathring{B} \lor x = B \} $ the set of extreme points with its remaining part.
We can see that $p \in \coex{X}$, and thus $(p, X) \in \ex{X \lor p}$.

\begin{lemma}
Let $\lat$ be an upper semimodular lattice.
Then, for all $X \in \lat$ and $a \in \adm{X}$, $X  \prec X \lor a$.
\end{lemma}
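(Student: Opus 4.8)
The plan is to obtain the statement as an immediate consequence of upper semimodularity, applied to the join-irreducible $a$ together with the element $X$.

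First I would translate the hypotheses. By definition $a \in \adm{X}$ means precisely $X \land a \prec a$. Since this is a strict cover we have $X \land a \neq a$, hence $a \not\le X$, and therefore $X \lneq X \lor a$. I record this explicitly because it is what makes the desired conclusion ``$X \prec X \lor a$'' a genuine (nondegenerate) cover rather than the trivial equality $X = X \lor a$.

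The core step is then a single application of the semimodular covering law. The admissibility condition $X \land a \prec a$ is exactly the hypothesis that the meet $X \land a$ is covered by $a$, and upper semimodularity converts such a meet-cover into the corresponding join-cover on the other argument, yielding $X \prec a \lor X = X \lor a$. The one point that needs care is the orientation: I would instantiate the semimodular implication with the join-irreducible $a$ occupying the slot whose meet-cover is assumed (and $X$ in the other slot), because admissibility places the covering relation immediately below $a$, not below $X$. With that choice the conclusion lands on $X \prec X \lor a$, as required.

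I do not expect any real obstacle here. No induction, no Jordan--Dedekind chain counting, and no use of the modular law are needed; the argument invokes nothing beyond the definitions of $\adm{\cdot}$ and of upper semimodularity. The only things to get right are the bookkeeping of which element sits in which slot of the semimodular implication and the short observation that $a \not\le X$, which guarantees the asserted cover is nontrivial.
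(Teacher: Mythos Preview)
Your proposal is correct and matches the paper's own proof essentially line for line: the paper simply notes that $a \in \adm{X}$ means $X \land a \prec a$ and then invokes upper semimodularity to conclude $X \prec X \lor a$. Your added observation that $a \not\le X$ (so the cover is genuine) is fine extra bookkeeping but is not needed beyond what the definition of $\prec$ already gives.
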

\begin{proof}
$a \in \adm{X}$ indicates $X \land a \prec a$. By the upper semimodularity, this implies that $X  \prec X \lor a$.
\end{proof}

\begin{lemma}
\label{lem:precheight}
Let $\lat$ be a lower or upper semimodular lattice.
Then, $X \prec Y$ if and only if $X \le Y$ and $|X| + 1 = |Y|$.
\end{lemma}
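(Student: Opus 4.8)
The plan is to reduce the statement to the Jordan--Dedekind property, which is already available in the preliminaries: in a lower- or upper-semimodular lattice, for any $X \le Y$ all maximal chains from $X$ to $Y$ have the same length, and $|X|$ is by definition the common length of the maximal chains from $\bot$ to $X$. The key observation I would use repeatedly is that concatenating a maximal chain from $\bot$ to $X$ with a maximal chain from $X$ to $Y$ yields a maximal chain from $\bot$ to $Y$, so its length is $|Y|$; hence any maximal chain from $X$ to $Y$ has length exactly $|Y| - |X|$.

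For the forward direction, suppose $X \prec Y$. Then $X < Y$, so in particular $X \le Y$ and $X \neq Y$. Since $X \prec Y$ there is no element strictly between $X$ and $Y$, so the one-step sequence $X \prec Y$ is already a maximal chain from $X$ to $Y$, of length $1$. By the remark above, its length equals $|Y| - |X|$, giving $|Y| - |X| = 1$, i.e. $|X| + 1 = |Y|$.

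For the backward direction, suppose $X \le Y$ and $|X| + 1 = |Y|$. Then $X \neq Y$ (their heights differ), so $X < Y$ and there exists at least one maximal chain from $X$ to $Y$. By the Jordan--Dedekind property any such maximal chain has length $|Y| - |X| = 1$, so it consists only of the step $X \prec Y$; equivalently, were there some $Z$ with $X \lneq Z \lneq Y$, we could build a maximal chain from $X$ to $Y$ of length at least $2$, contradicting that the length is $1$. Hence no intermediate element exists and $X \prec Y$.

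I do not expect a serious obstacle here, as the whole argument is an unpacking of the height definition together with the already-cited Jordan--Dedekind property; the only point requiring mild care is the bookkeeping of chain lengths, namely justifying that a maximal chain from $X$ to $Y$ has length $|Y|-|X|$ by gluing it onto a maximal chain from $\bot$ to $X$ and invoking equicardinality of maximal chains. Since semimodularity (in either direction) is exactly what guarantees this property, no additional hypotheses are needed.
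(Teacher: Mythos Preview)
Your argument is correct and is essentially the same as the paper's: the paper's proof is a one-liner that simply appeals to the height being well-defined in a (lower or upper) semimodular lattice, i.e., to the Jordan--Dedekind property you invoke. You have merely written out in detail what the paper declares trivial.
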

\begin{proof}
Because the height is well-defined in a lower or upper semimodular lattice, this lemma trivially holds.
\end{proof}

\begin{lemma}
\label{lem:subadm}
Let $\lat$ be a lattice.
For any $X, Y \in \lat$ and $a \in \adm{Y}$ with $a \not \le X$, there exists $b \in \adm{X}$ such that $b \le a$.
\end{lemma}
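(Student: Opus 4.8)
The plan is to produce $b$ as a minimal join-irreducible element that lies below $a$ but not below $X$, and then to read off its admissibility from the structure of its unique lower cover. First I would record two facts about finite-length lattices that require no (semi)modularity. (i) Every join-irreducible $b$ has a unique lower cover $b^-$: two distinct lower covers $c_1, c_2$ would force $c_1 \lor c_2 = b$, contradicting join-irreducibility; moreover any $Z < b$ satisfies $Z \le b^-$, since extending $Z < b$ to a maximal chain shows its penultimate element is a lower cover of $b$, hence equals $b^-$. (ii) Every element $W$ is the join of \emph{finitely many} join-irreducibles lying below it, proved by induction on height: if $W$ is not join-irreducible, write $W = W_1 \lor W_2$ with $W_1, W_2 < W$ and apply the inductive hypothesis to each. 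Because all joins involved are finite, this avoids any appeal to completeness.

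Next I would set up the selection. Since $a \not\le X$ we have $X \land a < a$, and since $a \in \adm{Y} \subseteq \J$ the set $S = \{ b \in \J : b \le a,\ b \not\le X \}$ contains $a$, so it is non-empty; finite length (the descending chain condition) lets me choose a minimal $b \in S$. Writing $b^-$ for the unique lower cover of $b$, the condition $b \not\le X$ gives $X \land b < b$, whence $X \land b \le b^-$ by fact (i).

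The crux is the reverse inequality $b^- \le X$; together with $b^- \le b$ it yields $b^- \le X \land b \le b^-$, so $X \land b = b^- \prec b$, i.e. $b \in \adm{X}$, with $b \le a$ as required. I would prove $b^- \le X$ by contradiction. If $b^- \not\le X$, then by fact (ii) write $b^- = \bigvee_i c_i$ with each $c_i \in \J$ and $c_i \le b^-$; not all $c_i$ can satisfy $c_i \le X$, for otherwise $b^- \le X$, so some $c_{i_0} \not\le X$. But $c_{i_0} \le b^- < b \le a$ places $c_{i_0}$ in $S$ strictly below $b$, contradicting minimality. Hence $b^- \le X$, completing the argument.

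Since the statement is for arbitrary lattices, I would be careful to invoke no property beyond finite length; the only point demanding attention is the representation in fact (ii), which must be kept finite so as to sidestep questions of completeness, while the remainder is a routine minimality argument built on the unique-lower-cover structure of join-irreducibles.
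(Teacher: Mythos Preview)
Your proof is correct and rests on the same core idea as the paper's---descend below $a$ to something minimal that is still not below $X$, then read off admissibility---but the two implementations differ in a small way worth noting. The paper does not restrict the descent to join-irreducibles: it repeatedly replaces $b_i$ by any lower cover $Z \prec b_i$ with $Z \not\le X$, stopping when every lower cover of the current $b$ lies below $X$; at that point each $Z \prec b$ satisfies $Z \le X \land b < b$, hence $Z = X \land b$, so $b$ has a \emph{unique} lower cover and is therefore join-irreducible with $X \land b \prec b$. Your version instead stays inside $\J$ from the start and uses the join-irreducible decomposition of $b^-$ (your fact~(ii)) to obtain the contradiction. The paper's route is slightly leaner in that it never needs fact~(ii) and discovers join-irreducibility as a consequence; your route makes the role of join-irreducibility explicit throughout and is perhaps easier to read, at the cost of one extra standard lemma.
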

\begin{proof}
We construct $b$ as follows.
Initially, we set $b_0 = a$.
For each $i$, if there exists $Z \prec b_i$ with $Z \not \le X$, we set $b_{i+1} = Z$ and continue the process; otherwise, we terminate the process and obtain $b = b_i$.
Note that $b \neq \bot$ because $\bot \le X$.
Then, $\{ Z \in \lat : Z \prec b \}$ are maximal elements in $X \land b$, and because $X \land b$ must be determined uniquely, there exists a unique $Z \prec b$ such that $Z = X \land b$.
This means $b \in \adm{X}$.
\end{proof}

\begin{lemma}
\label{lem:prec}
Let $\lat$ be a lower semimodular lattice.
For any $X, Y \in \lat$ with $X \prec Y$ and $Z \le Y$ with $Z \not \le X$, we have $X \land Z \prec Z$.
\end{lemma}
\begin{proof}
Because $Z \not \le X$ and $X, Z \le Y$, we have $X < X \lor Z \le Y$. 
Because $X \prec Y$, it must be $X \lor Z = Y$; thus, $X \prec X \lor Z$.
By the lower submodularity, this implies $X \land Z \prec Z$.
\end{proof}

\begin{lemma}[Chernoff Property]
Let $\mathcal{L}$ be a lower semimodular lattice.
Let $X, Y \in \mathcal{L}$ such that $X \le Y$, and $(y, \mathring{Y}) \in \ex{Y}$. 
If $y \in X$, then $(y, \mathring{Y} \land X) \in \ex{X}$.
\end{lemma}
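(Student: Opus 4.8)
The plan is to unpack the asserted membership $(y,\mathring{Y}\land X)\in\ex{X}$ into its defining requirements and verify each in turn. Reading $y\in X$ as $y\le X$ and recalling the definition of $\ex{\cdot}$, I must show that (i) $\mathring{Y}\land X\prec X$ and (ii) $(\mathring{Y}\land X)\lor y=X$; the join-irreducibility of $y$ carries over directly from the hypothesis $(y,\mathring{Y})\in\ex{Y}$, so no extra work is needed for that component.

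First I would record the elementary fact that $y\not\le\mathring{Y}$. Indeed, if $y\le\mathring{Y}$ held, then $\mathring{Y}\lor y=\mathring{Y}$, contradicting $\mathring{Y}\lor y=Y$ together with $\mathring{Y}\prec Y$ (so $\mathring{Y}<Y$ strictly). Since $y\le X$, this at once forces $X\not\le\mathring{Y}$. The heart of the argument is then a single application of Lemma~\ref{lem:prec}: taking the cover $\mathring{Y}\prec Y$ and the element $X$, which satisfies $X\le Y$ and $X\not\le\mathring{Y}$, lower semimodularity yields $\mathring{Y}\land X\prec X$, establishing (i).

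For (ii), I would observe that $y\le X$ while $y\not\le\mathring{Y}\land X$ (because $\mathring{Y}\land X\le\mathring{Y}$ and $y\not\le\mathring{Y}$). Hence $(\mathring{Y}\land X)\lor y$ lies strictly above $\mathring{Y}\land X$ and below $X$; since $\mathring{Y}\land X\prec X$ leaves no element strictly in between, the join must equal $X$, which is exactly (ii). I expect the only genuinely non-mechanical step to be spotting the correct instantiation of Lemma~\ref{lem:prec}, with $X$ playing the role of the sub-element $Z$ of the cover $\mathring{Y}\prec Y$; once $X\not\le\mathring{Y}$ is secured, the lower-semimodular cover-transfer does the real work, and the surrounding verifications reduce to routine lattice manipulations.
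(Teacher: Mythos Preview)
Your proof is correct and follows essentially the same line as the paper's: both show $X\not\le\mathring{Y}$ from $y\le X$ and $y\not\le\mathring{Y}$, then use lower semimodularity (you via Lemma~\ref{lem:prec}, the paper directly) to obtain $\mathring{Y}\land X\prec X$, and finish by observing that $(\mathring{Y}\land X)\lor y$ must equal $X$ by the cover relation. The only cosmetic difference is that you package the semimodularity step as an appeal to Lemma~\ref{lem:prec}.
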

\begin{proof}
Based on this assumption, we have $X \not \le \mathring{Y}$; otherwise, $y \le X \le \mathring{Y}$, which contradicts $\mathring{Y} \lor y \succ \mathring{Y}$.
Thus, $X \lor \mathring{Y} \succ \mathring{Y}$. 
By the lower semimodularity, we have $X \land \mathring{Y} \prec X$.
By $y \not \le X \land \mathring{Y}$ and $y \le X$, we have $X \land \mathring{Y} = X$.
\end{proof}
\begin{lemma}
For any $(z, Z) \in \ex{X \lor Y}$, the element $z$ is extreme point for $X$ and $Y$.
\end{lemma}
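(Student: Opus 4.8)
The plan is to obtain the conclusion from the Chernoff property, combined with a case split on how $z$ sits relative to each joinand. First I would unfold the hypothesis: by definition, $(z,Z) \in \ex{X \lor Y}$ means that $z$ is join-irreducible, that $Z \prec X \lor Y$, and that $Z \lor z = X \lor Y$; in particular $z \not\le Z$. I would then treat $X$ and $Y$ separately and symmetrically, so fix the joinand $X$.

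In the case $z \le X$, the situation is exactly the hypothesis of the Chernoff property applied to the chain $X \le X \lor Y$ and to the extreme pair $(z,Z) \in \ex{X \lor Y}$. It returns $(z, Z \land X) \in \ex{X}$, i.e.\ $z$ is an extreme point of $X$ with the explicit witness $Z \land X \prec X$. In the opposite case $z \not\le X$, I would instead show that $z$ is a \emph{co-extreme} point of $X$, i.e.\ $X \prec X \lor z$, so that $(z, X) \in \ex{X \lor z}$ through the correspondence recorded right after the definition of $\ex{\cdot}$. The key ingredient is Lemma~\ref{lem:prec}: since $Z \prec X \lor Y$, $z \le X \lor Y$, and $z \not\le Z$, it gives $Z \land z \prec z$, which exhibits the unique lower cover of the join-irreducible $z$; combined with $Z \lor z = X \lor Y$ this is what I would use to control the height of $X \lor z$ over $X$.

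The hard part is precisely this second case: guaranteeing that adjoining $z$ to $X$ climbs by a single cover. In a general lower semimodular lattice the join $X \lor z$ may rise by more than one cover, so the argument must exploit the ambient structure and the fact that $z$ is pinned between $Z$ and $X \lor Y = Z \lor z$. For atomic modular lattices (the setting of Corollary~\ref{cor:strongexchange}) the obstacle disappears, since $z$ is then an atom and $X \prec X \lor z$ automatically whenever $z \not\le X$; outside the atomic case one has to argue via $X \land z$ and $Z \land z$ that the jump is exactly one, which is where the modularity of $\lat$ is genuinely needed.

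Once $X \prec X \lor z$ is established, both cases present $z$ as an extreme point for $X$ in the pair sense of $\ex{\cdot}$, and running the identical argument with $Y$ in place of $X$ finishes the proof. I would emphasize in the write-up that the whole statement is really two invocations of the Chernoff property (one for $X$, one for $Y$), the only nontrivial work being the co-extreme branch treated above.
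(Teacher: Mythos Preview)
The paper does not split into cases. Its proof is three lines: from $Z \prec X \lor Y$ it obtains $Z \land X \prec X$ and $Z \land Y \prec Y$ via lower (local) modularity, and since $z \not\le Z$ also $z \not\le Z \land X$ and $z \not\le Z \land Y$; the pairs $(z, Z \land X)$ and $(z, Z \land Y)$ are then taken as witnesses that $z$ is extreme for $X$ and for $Y$. That is exactly your first branch (the Chernoff-property invocation), applied once to each joinand, with no further work.

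Your second branch ($z \not\le X$) does not establish the stated conclusion. Showing $X \prec X \lor z$ makes $z$ a \emph{co}-extreme point of $X$, i.e.\ gives $(z, X) \in \ex{X \lor z}$; it does not produce a witness $\mathring X \prec X$ with $\mathring X \lor z = X$, and by definition an extreme point of $X$ must lie below $X$. In fact the lemma as literally written fails whenever $z \not\le X$ (in a Boolean lattice take $X$ and $Y$ disjoint and $z \le Y$), so both the statement and the paper's proof are to be read under the tacit hypothesis $z \le X$, $z \le Y$. Your first branch already is the paper's argument; the second should simply be dropped rather than rescued.
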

\begin{proof}
By definition, 
$Z \prec X \lor Y$. 
Based on a lower locally modularity, we have
$Z \land X \prec X$ and
$Z \land Y \prec Y$.
Because $z \not \le X \land Z$ and $z \not \le Y \land Z$, we proved the statement.
\end{proof}

\begin{lemma}
\label{lem:existence-of-join-irreducibles}
Let $X < Y$.
Then, there exists $q \le Y$ such that $q \in \coex{X}$.
\end{lemma}
\begin{proof}
Let $X'$ be $X \prec X' \le Y$.
Then, there exists a join irreducible $q$ that is less than $X'$ and not less than $X$ because we otherwise have $X' = \bigvee \{ q \in J : q \le X' \} \le X$, which contradicts $X \prec X'$.
Because $X \lor q = X' \succ X$, $q \in \coex{X}$.
\end{proof}

The following lemma is useful in a lower locally modular lattice.
\begin{lemma}[Ladder Lemma]
\label{lem:ladder}
Let $\mathcal{L}$ be a lower locally modular lattice.
Let $X_0 \prec X_1 \prec \dots \prec X_m$ be a maximal chain between $X_0$ and $X_m$, and $Z \in \mathcal{L}$ be an element such that $Z \prec X_m$.
There then exists $l \in \mathbb{Z}$ such that 
$X_i \land Z \prec X_i$ for $i = l+1, \dots, m$ and $X_{l-1} = X_{l} \land Z$ (see Figure~\ref{fig:ladder}).
\begin{figure}[tb]
\begin{minipage}{.48\textwidth}
\centering
\begin{tikzpicture}[scale=0.8]
\node[draw, circle, label=west:{$X_5$}] at (5,5) (X5) {};
\node[draw, circle, label=west:{$X_4$}] at (4,4) (X4) {};
\node[draw, circle, label=west:{$X_3$}] at (3,3) (X3) {};
\node[draw, circle, label=west:{$X_2$}] at (2,2) (X2) {};
\node[draw, circle, label=west:{$X_1$}] at (1,1) (X1) {};
\node[draw, circle, label=west:{$X_0$}] at (0,0) (X0) {};
\node[draw, circle, label=east:{$Z = X_5 \land Z$}] at (6,4) (Z5) {};
\node[draw, circle, label=east:{$X_4 \land Z$}] at (5,3) (Z4) {};
\node[draw, circle, label=east:{$X_3 \land Z$}] at (4,2) (Z3) {};
\node[draw, circle, label=east:{$X_2 \land Z$}] at (3,1) (Z2) {};
\node[draw, circle, label=east:{$X_1 \land Z$}] at (2,0) (Z1) {};
\node[draw, circle, label=east:{$X_0 \land Z$}] at (1,-1) (Z0) {};
\draw[-] (X5)--(X4);
\draw[-] (X4)--(X3);
\draw[-] (X3)--(X2);
\draw[-] (X2)--(X1);
\draw[-] (X1)--(X0);
\draw[-] (Z5)--(Z4);
\draw[-] (Z4)--(Z3);
\draw[-] (Z3)--(Z2);
\draw[-] (Z2)--(Z1);
\draw[-] (Z1)--(Z0);

\draw[-] (X5)--(Z5);
\draw[-] (X4)--(Z4);
\draw[-] (X3)--(Z3);
\draw[-] (X2)--(Z2);
\draw[-] (X1)--(Z1);
\draw[-] (X0)--(Z0);
\end{tikzpicture}
\end{minipage}
\qquad
\begin{minipage}{.48\textwidth}
\centering
\begin{tikzpicture}[scale=0.8]
\node[draw, circle, label=west:{$X_5$}] at (5,5) (X5) {};
\node[draw, circle, label=west:{$X_4$}] at (4,4) (X4) {};
\node[draw, circle, label=west:{$X_3$}] at (3,3) (X3) {};
\node[draw, circle, label=east:{$Z = X_5 \land Z$}] at (6,4) (Z5) {};
\node[draw, circle, label=east:{$X_4 \land Z$}] at (5,3) (Z4) {};
\node[draw, circle, label=east:{$X_2 = X_2 \land Z = X_3 \land Z$}] at (4,2) (X2) {};
\node[draw, circle, label=east:{$X_1 = X_1 \land Z$}] at (3,1) (X1) {};
\node[draw, circle, label=east:{$X_0 = X_0 \land Z$}] at (2,0) (X0) {};
\draw[-] (X5)--(X4);
\draw[-] (X4)--(X3);
\draw[-] (X3)--(X2);
\draw[-] (X2)--(X1);
\draw[-] (X1)--(X0);
\draw[-] (Z5)--(Z4);
\draw[-] (X5)--(Z5);
\draw[-] (X4)--(Z4);
\draw[-] (X2)--(Z4);
\end{tikzpicture}
\end{minipage}
\caption{Ladder Lemma. Each line indicates the cover relation ($\prec$).}
\label{fig:ladder}
\end{figure}
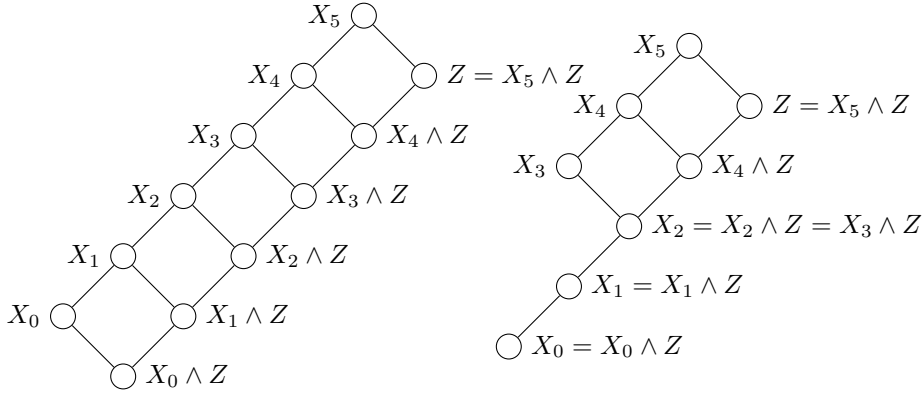
\end{lemma}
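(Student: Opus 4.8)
The plan is to reduce the Ladder Lemma to Lemma~\ref{lem:prec}, after observing that a lower locally modular lattice is in particular lower semimodular (Figure~\ref{fig:lattices}), so that Lemma~\ref{lem:prec} is available throughout. The conceptual point is that $Z \prec X_m$ together with $X_i \le X_m$ forces a clean dichotomy at each rung of the chain: for every $i$, either $X_i \le Z$, or meeting with $Z$ drops the height by exactly one. Locating the single index at which this dichotomy switches is precisely the threshold $l$ in the statement.

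First I would identify that threshold. Since $X_0 \le X_1 \le \dots \le X_m$ and the meet is monotone, the set $\{ i : X_i \le Z \}$ is down-closed, and it omits $m$ because $Z \prec X_m$ gives $Z < X_m$ and hence $X_m \not\le Z$. I define $l = 1 + \max \{ i : X_i \le Z \}$ (with the convention $l = 0$ when no $X_i$ lies below $Z$). By construction $X_i \le Z$ for $i < l$, and by up-closedness of the complementary set $\{ i : X_i \not\le Z \}$ we get $X_i \not\le Z$ for all $i \ge l$.

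The rungs $X_i \land Z \prec X_i$ for $i \ge l$ then follow immediately: applying Lemma~\ref{lem:prec} to the covering $Z \prec X_m$, with the element $X_i \le X_m$ satisfying $X_i \not\le Z$ playing the role of its free parameter, yields $Z \land X_i \prec X_i$. This already covers the required range $i = l+1, \dots, m$. For the merge relation $X_{l-1} = X_l \land Z$, I would argue purely order-theoretically: $X_{l-1} \le Z$ and $X_{l-1} \le X_l$ give $X_{l-1} \le X_l \land Z$, while $X_l \not\le Z$ gives $X_l \land Z < X_l$; since $X_{l-1} \prec X_l$ leaves nothing strictly between, these two facts force $X_l \land Z = X_{l-1}$.

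There is no deep obstacle here, as the statement reduces to the local covering fact of Lemma~\ref{lem:prec} plus monotonicity bookkeeping; the only delicate point is the boundary case $l = 0$ (the situation depicted in the left panel of Figure~\ref{fig:ladder}), where $X_{l-1}$ refers to an index below the given chain. In that case $X_0 \not\le Z$, so the rung argument already gives $X_0 \land Z \prec X_0$, and the merge relation is either vacuous or made consistent by extending the chain one step downward via $X_{-1} := X_0 \land Z$. I would remark in passing that the full force of lower local modularity is not actually used; lower semimodularity already suffices, which is all Lemma~\ref{lem:prec} requires.
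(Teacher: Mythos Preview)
Your proof is correct and takes a cleaner route than the paper's. The paper proceeds by induction on the chain length $m$: the base case $m=1$ invokes the lower local modularity of the interval $[X_1^-, X_1]$ to show that when $X_0 \neq Z$ (both covered by $X_1$) their meet $X_0 \land Z$ is covered by each; the inductive step peels off one rung at a time, applying the hypothesis to the subchain $X_1 \prec \dots \prec X_m$ and then the base case to $X_0 \prec X_1$ with $X_1 \land Z$ playing the role of the coatom. You instead locate the threshold $l$ globally via the down-closed set $\{i : X_i \le Z\}$ and apply Lemma~\ref{lem:prec} once per rung, with no induction at all. This is shorter and, as you observe, makes transparent that only lower semimodularity is used---the paper's appeal to lower local modularity at $X_1$ is really just the diamond property at a covered element, which lower semimodularity already delivers. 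One minor difference worth noting: the paper's inductive argument implicitly also yields the right-hand verticals $X_{i-1} \land Z \prec X_i \land Z$ depicted in Figure~\ref{fig:ladder}; these are not part of the formal statement you were asked to prove, but they follow from your conclusion anyway via a height count and Lemma~\ref{lem:precheight}.
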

\begin{proof}
We prove the claim through an induction based on the length of the chain.
Suppose $m = 1$. 
Then, $X_1$ has $X_0$, and $Z = X_1 \land Z$ as the covering elements. 
If $X_0 = X_1 \land Z$, then the claim holds.
Otherwise, by the lower local modularity at $X_1$, we see that $X_0 \land (X_1 \land Z) \prec X_1 \land Z$ is covered by both $X_0$ and $X_1 \land Z$.
Thus, the claim holds.

Now, we suppose $m \ge $.
Based on the induction hypothesis applied to the subchain $X_1 \prec \dots \prec X_m$, we obtain the ladder from $X_1$.
If $X_1 = X_1 \land Z$, then $X_0 = X_0 \land X_1 = X_0 \land (X_1 \land Z) = X_0 \land Z$; therefore, the claim holds.
Otherwise, by the induction hypothesis to $X_0 \prec X_1$ and $X_1 \land Z \prec X_1$, we prove the claim.
\end{proof}

\subsection{Proofs for Independence Axiom}
\begin{proof}[Proof of Theorem~\ref{thm:modular-ind-height} and Theorem~\ref{thm:llm-height-ind}]
It suffices to prove Theorem~\ref{thm:llm-height-ind}.

(H1) and (I1) are the same.
We prove the equivalence of (H2) and (I2) under (H1).

\noindent ($\Rightarrow$) \ 
Let $I_1, I_2 \in \mathcal{I}$ with $|I_1| < |I_2|$. 
We choose any maximal element $J \in \ind^{I_1 \lor I_2}$ such that $J \ge I_1$.
By (H1), $|J| \ge |I_2| > |I_1|$. Thus, $J$ satisfies the requirement.

\noindent ($\Leftarrow$) \ 
If there exists $X \in \mathcal{L}$ and maximal $I_1, I_2 \in \mathcal{I}^X$ with $|I_1| < |I_2|$, by (I1), there exists $J \in \mathcal{I}$ such that $I_1 < J \le I_1 \lor I_2$.
However, because $J \le I_1 \lor I_2 \le X$, $J \in \mathcal{I}^X$, which contradicts the maximality of $I_1$.
\end{proof}

\begin{proof}[Proof of Proposition~\ref{prop:localaug}]
Let $I_1, I_2 \in \ind$ such that $|I_1| \le |I_2|$.
We prove that there exists $J \in \ind$ such that $I_1 < J \le I_1 \lor I_2$ by the induction on $k := |I_1 \lor I_2| - |I_2|$.
If $k = 0$, then we can choose $J = I_2$.
If $k \ge 1$, then we pick $I_2 < K \prec I_1 \lor I_2$. 
Here, $I_1 \not \le K$ (otherwise, $I_1 \lor I_2 = K$, which contradicts $K \prec I_1 \lor I_2$);
hence, $I_1 \lor K \succ K$.
By the lower semimodularity, $I_1 \land K \prec I_1$.
We apply the induction hypothesis twice to obtain $K_1, K_2 \in \mathcal{I}$ such that $I_1 \land K \prec K_1 \prec K_2 \le K$.
Now, we see $I_1 \land K_2 \prec I_1$; hence, by the upper semimodularity, $I_1 \lor K_2 \succ K_2$. 
Therefore, by (I2l), we obtain the claim.
\end{proof}

To prove Theorem~\ref{thm:weak-USM}, we use the following lemma.
\begin{lemma}
\label{lem:prec-USM}
Let $\mathcal{L}$ be an upper semimodular lattice.
For any $X, Y, Z \in \mathcal{L}$ such that $X \prec Y$, we have $X \lor Z \preceq Y \lor Z$.
\end{lemma}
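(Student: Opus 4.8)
The plan is to obtain the conclusion from a single application of the semimodular law, after locating the element $Y \land (X \lor Z)$. Writing $W := X \lor Z$, I would first record the two inclusions $X \le Y \land W \le Y$: the left one because $X \le Y$ and $X \le X \lor Z = W$, the right one trivially. Since the hypothesis $X \prec Y$ says that $X$ is covered by $Y$, the sandwiched element $Y \land W$ can only be $X$ or $Y$, and the whole argument splits according to which of these occurs.

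If $Y \land W = Y$, then $Y \le W = X \lor Z$, so $Y \lor Z \le X \lor Z$; together with the monotonicity inclusion $X \lor Z \le Y \lor Z$ (which follows from $X \le Y$) this forces $X \lor Z = Y \lor Z$, and the conclusion $X \lor Z \preceq Y \lor Z$ holds in its equality form. Thus the only substantive case is $Y \land W = X$, where $Y \land W = X \prec Y$, i.e.\ $Y$ covers the meet $Y \land W$.

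In that case I would invoke upper semimodularity exactly as it is used elsewhere in the paper (for instance in deducing $X \prec X \lor a$ from $X \land a \prec a$): from $Y \land W \prec Y$ it yields $W \prec Y \lor W$. Since $X \le Y$ gives $Y \lor W = Y \lor X \lor Z = Y \lor Z$, this reads $X \lor Z \prec Y \lor Z$, which is the cover form of the desired conclusion. Combining the two cases gives $X \lor Z \preceq Y \lor Z$ in general.

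I expect the only delicate point to be the correct bookkeeping around the semimodular step: one must apply the covering law to the pair $(Y, W)$ in the direction that produces a cover over $W$ (not over $Y$), and one must check that the degenerate situation $X \lor Z = Y \lor Z$ is precisely the case $Y \land W = Y$, so that it is absorbed into the $\preceq$ relation rather than treated separately. Everything else is elementary monotonicity of the join together with the observation that an element lying between $X$ and $Y$, when $X \prec Y$, must coincide with one of the two endpoints.
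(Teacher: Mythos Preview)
Your proof is correct and follows essentially the same route as the paper: set $W = X \lor Z$, observe $X \le W \land Y \le Y$ so that $W \land Y \in \{X,Y\}$, handle the case $W \land Y = Y$ by equality of the joins, and in the case $W \land Y = X$ apply upper semimodularity to get $W \prec W \lor Y = Y \lor Z$. The argument, including the bookkeeping you flag as delicate, matches the paper's proof almost line for line.
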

\begin{proof}
Let $W = Z \lor X$.
Because $X \le W \land Y \le Y$, the meet $W \land Y$ must be either $X$ or $Y$.
If it is $Y$, we have $Y \le W$.
Therefore, $Y \lor Z = (Y \lor X) \lor Z = Y \lor W = W = X \lor Z$.
Otherwise, i.e., if it is $X$, we have $W \land Y \prec Y$.
By the upper semimodularity, $W \prec W \lor Y = Z \lor Y$.
\end{proof}

\begin{proof}[Proof of Theorem~\ref{thm:weak-USM}]
Let $I_1$ and $I_2$ be $|I_1| < |I_2|$.
We prove that there exists $J$ such that $I_1 \prec J \le I_1 \lor I_2$ by the induction on $|I_2|$.
If $I_2$ is maximal, there is nothing to prove.
If $I_2$ is not maximal, we choose $I_2 \prec I'$ with $I' \in \mathcal{I}$ and apply the induction hypothesis to obtain $K \in \mathcal{I}$ such that $I_1 \prec K \le I_1 \lor I'$.
Because $|K| < |I'|$,
we can apply the induction hypothesis to $K$ and $J'$ and obtain $K' \in \mathcal{I}$ such that $K \prec K' \le K \lor I' \le I_1 \lor I'$.

By Lemma~\ref{lem:prec-USM}, $I_1 \lor I_2 \preceq I_1 \lor I'$.
If $I_1 \lor I_2 = I_1 \lor I'$ then $K \le I_1 \lor I_2$ and $K$ satisfies the requirement.
Otherwise,  $I_1 \lor I_2 \prec I_1 \lor I'$.
If $K' \le I_1 \lor I_2$, the element $K$ satisfies the requirement.
Otherwise, we have $I_1 \lor I_2 \prec (I_1 \lor I_2) \lor K'$.
By the lower semimodularity, $(I_1 \lor I_2) \land K' \prec K'$.
Thus, $(I_1 \lor I_2) \land K'$ satisfies the requirement.
\end{proof}

\subsection{Proofs for Base Axiom}

\begin{proof}[Proof of Lemma~\ref{lem:base-sameheight}]
Let $B_1, B_2 \in \mathcal{B}$ with $|B_1| \le |B_2|$.
We prove the lemma by the induction on $k = |B_1 \lor B_2| - |B_1|$.
If $k = 0$ then $B_1 = B_2$, which demostrates the claim.
For $k \ge 1$, we fix a chain $B_1 \prec \dots \prec Z \prec B_1 \lor B_2$.
Because $Z \lor B_2 \succ Z$, by the lower semimodularity, $B_2 \land Z \prec B_2$. 
By applying (B2) to $B_2 \land Z \le B_2$ and $B_1 \le Z$, we obtain $B \in \mathcal{B}$ such that $B_2 \land Z \le B \le Z$.
Here, $B_1 \lor B \le Z$; thus, $|B_1 \lor B| \le |Z| = |B_1 \lor B_2| - 1$. 
Therefore, by the induction hypothesis, $|B| = |B_1|$.
Because $B_2 \land Z < B$ (otherwise, $B$ is comparable with $B_2$, which contradicts the base incomparability), we have $|B| > |B_2 \land Z| = |B_2| - 1$; therefore, $|B| \ge |B_2|$. 
Hence, we obtain $|B_1| \ge |B_2|$.
\end{proof}

\begin{proof}[Proof of Theorem~\ref{thm:mid} and Theorem~\ref{thm:llm-base-wind}]
By Proposition~\ref{prop:localaug}, 
it suffices to prove Theorem~\ref{thm:llm-base-wind}.

\noindent ($\Leftarrow$). \ 
By definition, (B1) is trivial.
We prove (B2). 
Let $B_1, B_2, X, Y$ be as in (B2).
Because $\mathcal{I}$ is an ideal and $X \le B_1$, we have $X \in \mathcal{I}$.
We construct $B$ as follows.
Initially, we put $B = X$. 
If $B$ is a maximal in $\mathcal{I}$ then $B$ satisfies the requirement.
Otherwise, by (H2), $|B| < |B_1| = |B_2|$.
By (I2w), there exists $I \in \mathcal{I}$ such that $B < I \le B_1 \lor B_1$.
We replace $B$ by $I$ and continue the process. 
Because each step increases the height of $B$ by at least 1, it terminates in a finite step.

\noindent ($\Rightarrow$). \ 
By definition, (I1) is trivial. 
We prove (I2w).
Let $I_1, I_2 \in \mathcal{I}$ with $|I_1| < |I_2|$, where $I_2$ is maximal.
Because $I_2$ is maximal, $I_2 \in \base$.
By the definition of $\mathcal{I}$, there exists $B_1 \in \mathcal{B}$ such that $I_1 \le B_1$.
By (B2), there exists $B \in \mathcal{B}$ such that $I_1 \le B \le I_2$. 
By Lemma~\ref{lem:base-sameheight}, we have $|I_1| < |I_2| = |B|$; hence, $I_1 \neq B$.
Therefore, (I2w) holds.
\end{proof}

\subsection{Proofs for Rank Axiom}

\begin{proof}[Proof of  Theorem~\ref{thm:ind-rank-mod} and Theorem~\ref{thm:ind-rank}]
It suffices to prove Theorem~\ref{thm:ind-rank} because a modular lattice is both lower and upper semimodular.

\noindent (1). \ 
(R1) is trivial.
We prove (R2). 
It is easy to see that $r$ is an integer-valued non-decreasing function. 
Hence, it suffices to show that the increment $r(X) - r(X')$ is at most 1 for all $X \succ X'$.
Suppose that $r(X) = |I|$ for $I \in \ind$ and $I \le X$.
If $I \le X'$, then $r(X) - r(X') = 0$.
Otherwise, by Lemma~\ref{lem:prec}, $X' \land I \prec I$.
Because $X' \land I \in \ind$, we have $r(X) \ge |X' \land I| = |I| - 1 = r(X) - 1$, where the first equality follows from Lemma~\ref{lem:precheight}.

We finally prove (R3).
Let $X, Y \in \lat$ with $X \le y$ and $b \in \adm{Y}$.
If $r(Y \lor b) - r(Y) = 0$, the  downward DR-submodularity trivially holds.
Thus, by (R2), we only have to consider the case in which $r(Y \lor a) - r(Y) = 1$.
We pick a maximal chain $X = X_0 \prec X_1 \prec \dots \prec X_k = Y \prec X_{k+1} = Y \lor a$.
We then construct a corresponding chain $I = I_0 \preceq I_1 \preceq \dots \preceq I_k \prec I_{k+1}$ as follows.
Let $I_0 \in \ind$ be the maximal independent set of $X_0$.
For each $i = 1, \dots, k+1$,  we construct $I_i \in \ind$ such that $r(X_i) = |I_i|$ by the following rule.
If $r(X_i) = |I_{i-1}|$, we apply $I_i = I_{i-1}$.
Otherwise, there exists $J_i \in \ind$ with $J_i \le X_i$ and $|I_{i-1}| < |J_i|$. 
By (R2), $|J_i| = |I_{i-1}| + 1$.
In addition, by (I2), there exists $I_i \in \ind$ such that $I_{i-1} < I_i \le I_{i-1} \lor J_i \le X_i$. 
By comparing the heights, $r(X_i) = |I_i|$.
Because $X_k \prec X_{k+1}$, we have $I_{k+1} \not \le X_k$ through the construction of $I_{k+1}$.
By Lemma~\ref{lem:prec}, $X_k \lor I_{k+1} = X_{k+1}$.
In addition, because $I_k \prec I_{k+1}$, there exists $b \in \adm{I_k}$ such that $I_k \lor b = I_{k+1}$.
Therefore, we have $X_k \lor b = X_{k+1}$.
This shows that $b \in  a \bmod Y$.
We can see that $b \not \le X$ because $b \in \adm{Y}$. 
Let $b' \in \adm{X}$ be any $b' \le b$.
If $b' \in \adm{I}$ then we put $b'' = b'$.
Otherwise, by Lemma~\ref{lem:subadm}, we select $b'' \in \adm{I}$ such that $b'' \le b'$.
Because $I \lor b'' \le I_k \lor b$, we have $I \lor b'' \in \ind$.
In addition, because $I \lor b'' \le X \lor b'$, we have $r(X \lor b') - r(X) = |I \lor b''| - |I| \ge 1$.
This implies the downward DR-submodularity.

\noindent (2). \ 
(I1)
$\ind$ is non-empty because $\perp \in \ind$. By (R2), we can easily show that $\ind$ is an ideal.
(I2)
Let $X,Y \in \ind$ with $|X| < |Y|$.
Take a chain $X = X_0 \prec X_1 \prec \dots \prec X_k = X \lor Y$.
Because $r(X \lor Y) \ge r(Y) = |Y| > |X| = r(X)$, there is an $i$ such that $r(X_{i+1}) - r(X_i) = 1$.
Based on the downward DR-submodularity (R3), there exists $w \in \adm{X}$ such that $w \le X \lor Y$ and $r(X \lor w) - r(X) = 1$.
Because $r(X \lor w) = |X \lor w| = |X| + 1$, we have $X \lor w \in \ind$.
This implies (I2).
\end{proof}

\begin{proof}[Proof of Lemma~\ref{lem:coexDR-equiv-DR-modular}]
Note that, in a modular lattice, $\coex{X} = \adm{X}$. 
In addition, in a modular lattice, for any $Z$ with $Z \equiv q \bmod Y$, there exists $p \in \coex{X}$ with $p \le Z$. 
Thus, we can ignore the second constraint in the outer maximum in \eqref{eq:downwardDR}.

If $f$ satisfies the DR-submodularity \eqref{eq:downwarddr}, it clearly satisfies the DR-submodularity' \eqref{eq:downwardDR} because the range of the outer maximum in \eqref{eq:downwardDR} is larger than that of \eqref{eq:downwarddr}.

Conversely, if $f$ satisfies \eqref{eq:downwardDR}, we choose any $z \le Z$ such that $z \in \adm{Y}$.
Then, $z = q \bmod Y$. 
Thus, it is a candidate for the outer maximum in \eqref{eq:downwarddr}.
Because the range of the inner minimum in \eqref{eq:downwarddr} is smaller than that of \eqref{eq:downwardDR} for $z \le Z$, it satisfies \eqref{eq:downwarddr}.
\end{proof}

\begin{proof}[Proof of Proposition \ref{prop:rank-upward}]
In this proof, we use Proposition~\ref{prop:dualrank}, which is proved later.
We note that the proof of Proposition~\ref{prop:dualrank} does not depend on this proposition.
We first prove that (R3) implies (R3u).
We define an order-reversing map $(\cdot) \colon \lat \to \lat^*$ by $\bar{X} = X$.
Let $r^*$ be the rank function of the dual matroid.
By Proposition \ref{prop:dualrank}, 
\begin{align}
    r^*(X) =  r(X) + (|\top| - |X|) - r(\top),
\end{align}
where $|\cdot|$ is the height of $\lat$.
The rank axiom (R3) for the dual matroid implies that $r^*$ is a downward DR-submodular function on $\lat^*$.
By the definition of downward DR-submodularity, we can easily check that $r$ is also a downward DR-submodular function on $\lat^*$.
Hence, by definition, $r$ is upward DR-submodular.

We can prove that (R3u) implies (R3) by a similar argument.
\end{proof}

To prove Theorem~\ref{thm:coexDR-imply-Ind}, we need the following lemma.

\begin{lemma}
\label{lem:down}
Suppose that $\mathcal{L}$ is a lower locally modular lattice.
Let $X, Y \in \mathcal{L}$ with $X \le Y$ and $q \in \coex{Y}$. 
Then, there exists $Z \in \mathcal{L}$ and $p \in \coex{X}$ such that $Z = q \mod Y$ and $p \le Z$.
\end{lemma}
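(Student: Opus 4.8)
The plan is to produce the required pair $(Z,p)$ by an explicit construction, rather than by transporting the cover relation $Y \prec Y \lor q$ down a maximal chain (which is what the Ladder Lemma, Lemma~\ref{lem:ladder}, would allow). First I would record the single structural fact that drives everything: since $q \in \coex{Y}$, the join $Y \lor q$ covers $Y$, so $q \not\le Y$; as $X \le Y$ this forces $q \not\le X$, and hence $X < X \lor q$.

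The natural candidate is then $Z := X \lor q$. It satisfies the modular-equivalence requirement for free: because $X \le Y$ we have $Z \lor Y = (X \lor q) \lor Y = q \lor Y$, that is, $Z = q \bmod Y$. It remains only to exhibit a co-extreme point of $X$ lying below $Z$, and this is exactly what Lemma~\ref{lem:existence-of-join-irreducibles} supplies: applied to the strict inequality $X < X \lor q = Z$, it yields a join-irreducible $p \le Z$ with $X \lor p \succ X$, i.e.\ $p \in \coex{X}$. The pair $(Z,p)$ then meets both conditions, completing the proof.

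The step I would expect to be the obstacle --- manufacturing a co-extreme point of $X$ that simultaneously lies below some element joining with $Y$ back up to $Y \lor q$ --- dissolves once one observes that $X \lor q$ already joins with $Y$ to $Y \lor q$, so no search for a suitable co-atom and no ladder argument is needed. It is worth flagging in the write-up that this argument uses only that the lattice has finite length (so that Lemma~\ref{lem:existence-of-join-irreducibles} applies) and makes no use of lower local modularity; that hypothesis is inherited from the ambient setting of the section but is not required here. The only genuine verifications are the two elementary facts $q \not\le X$ and $(X \lor q) \lor Y = q \lor Y$, both of which follow immediately from $X \le Y$ and $q \not\le Y$.
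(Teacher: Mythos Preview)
Your argument is correct and is genuinely simpler than the paper's. The paper first picks $Z$ with $q \le Z \prec Y \lor q$, then applies the Ladder Lemma (Lemma~\ref{lem:ladder}) to a maximal chain from $X$ to $Y \lor q$ in order to locate a cover of $X$ sitting below $Z$, and finally extracts $p$ via Lemma~\ref{lem:existence-of-join-irreducibles}. You bypass all of this by taking $Z = X \lor q$: the verification $Z \lor Y = q \lor Y$ is immediate from $X \le Y$, and then a single appeal to Lemma~\ref{lem:existence-of-join-irreducibles} on the strict inequality $X < X \lor q$ produces $p \in \coex{X}$ with $p \le Z$. Your observation that this uses only finite length, not lower local modularity, is also correct: the Ladder Lemma is the sole place the paper invokes that hypothesis here, and your route avoids it entirely. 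The trade-off is that the paper's choice of $Z$ is covered by $Y \lor q$, which is closer in spirit to the ``join-irreducible witness'' appearing in the original downward DR-submodularity (Definition~\ref{def:downwarddr}); but since the definition of downward DR-submodularity$'$ in \eqref{eq:downwardDR} ranges over arbitrary $Z \equiv q \bmod Y$, your larger $Z$ is perfectly admissible for the intended application to Theorem~\ref{thm:coexDR-imply-Ind}.
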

\begin{proof}
If $X = Y$, then we can choose $p = Z = q$.
Thus, we consider $X < Y$.
First, we choose an arbitrary chain from $q$ to $Y \lor q$ and select $Z \in \mathcal{L}$ as the second from the last in the chain, i.e., $q \le Z \prec Y \lor q$.
Now, we apply the Ladder lemma to the chain $X = X_0 \prec \dots \prec X_m = Y \lor Z$ and $Z$ to obtain the ladder.
If $X_0 \land Z \prec X_0$ (as shown on the left side of Figure~\ref{fig:ladder}), by Lemma~\ref{lem:existence-of-join-irreducibles}, there exists a join-irreducible $p$ such that $p \le X_1 \land Z$ and $p \not \le X$, which gives $X_0 \lor p = X_1 \succ X_0$.
Otherwise (the right of Figure~\ref{fig:ladder}), by Lemma~\ref{lem:existence-of-join-irreducibles}, there exists a join-irreducible $p$ such that $p \le X_1$ and $p \not \le X$, which gives $X_0 \lor p = X_1 \succ X_0$.
\end{proof}

\begin{proof}[Proof of Theorem~\ref{thm:coexDR-imply-Ind}]
We prove the independence axiom. 
(I1) holds by (R2).
We prove (I2).
Let $I_1, I_2 \in \mathcal{I}$ with $|I_1| < |I_2|$. 
We choose a chain $I_1 = J_0 \prec J_1 \prec \dots \prec J_k = I_1 \lor I_2$.
Because $\rho(I_1 \lor I_2) \ge \rho(I_2) = |I_2| > |I_1| = \rho(I_1)$, by (R2), there exists $i$ such that $\rho(J_{i+1}) - \rho(J_{i}) = 1$.
Let $q \in \coex{J_i}$ such that $J_i \lor q = J_{i+1}$.
By (R2) and (R3), there exists $Z = q \bmod J_i$ and $p \in \coex{I_1}$ with $p \le Z$ such that $\rho(I_1 \lor p) - \rho(I_1) = 1$.
Because $\rho(I_1) = |I_1|$, $\rho(I_1 \lor p) = |I_1| + 1 = |I_1 \lor I_1|$. 
This means that $I_1 \lor p \in \mathcal{I}$.
\end{proof}

\begin{proof}[Proof of Theorem~\ref{thm:sanotype-rank-to-ind}]
Note that (R1) and (R2) is equivalent to (I1) by the same proof as Theorem~\ref{thm:ind-rank}.
In addition, we can easily see that (R3) is equivalent to (I2).
\end{proof}

\subsection{Proofs for Dependence Axiom}

\begin{proof}[Proofs of Theorem~\ref{thm:mod-dep-ind} and Theorem~\ref{thm:llm-dep-ind}]
By Proposition~\ref{prop:localaug}, Theorem~\ref{thm:mod-dep-ind} implies Theorem~\ref{thm:llm-dep-ind}.
Note that the proof of Proposition~\ref{prop:localaug} is valid for a supermatroids on a lower locally modular lattice.
Therefore, we prove Theorem~\ref{thm:mod-dep-ind} in the following.

\noindent ($\Leftarrow$)\
(D1) is trivial.
We prove (D2).
Suppose that $Z \in \ind$ and $D_1 \land D_2 \in \ind$.
By the lower semimodularity, we have $D_1 \land D_2 \prec D_1$.
Thus, $|D_1 \land D_2| < |D_1| = |Z|$. 
By applying augmentation (I2) to the pair $(D_1 \land D_2, Z)$, we obtain $I \in \ind$ with $D_1 \land D_2 < I \le (D_1 \land D_2) \lor Z = D_1 \lor D_2$.
Here, the last equality follows from $D_1 \land D_2 \not \le Z$ and $Z < (D_1 \land D_2) \lor Z \le D_1 \lor D_2$. 

We next prove (D3).
Suppose to the contrary that $X, W \in \ind$.
Then, $|X|+1 \le |W|$.
By applying (I2) to $X$ and $W$, we have $I \in \ind$ with $X < I \le X \lor W$.
This implies that $Y \in \mathcal{I}$; however this is a contradiction.

(2)
(I1) is trivial.
It suffices to show (I2l) because of Proposition~\ref{prop:localaug}.
In other words, it suffices to show augmentation (I2) for a restricted lattice $\lat' = \{X \in \lat \mid I_1 \land I_2 \le X \le I_1 \lor I_2 \}$, i.e., all modular lattices with a  height of  at most 2.
Let $I_1, I_2 \in \ind$ with $|I_1| + 1 = |I_2|$ and $|I_2| \prec \top = I_1 \lor I_2$.
The case $I_1 \le I_2$ is trivial.
Suppose that $I_1 \not \le I_2$.
Then, one of the following holds:
(a) There uniquely exists $J$ such that $I_1 < J < \top$ 
(b) there are two or more $J$ such that $I_1 \prec J \prec \top$.
In case (a), $I_1$ has an augmentation $J$ owing to (D3).
Here, we take $X := I_1$, $W := I_2$, and $Y := J$ in (D3).
In case (b), $X$ has an augmentation owing to (D2).
Here, we take $Z : = I_1$ and $D_i = J_i$ $(i = 1,2)$, where $J_i$ satisfies $I_1 \prec J_i \prec \top$.
\end{proof}

\subsection{Proofs for Dual Matroids}

\begin{proof}[Proof of Proposition~\ref{prop:dualrank}]
The rank function is characterized as 
$r(X) = \max_{B \in \base} |B \land X|$.
Therefore, for the dual matroid, we have
$r^*(\bar{X}) = \max_{B \in \base} |\bar{B} \land^* \bar{X}|^*$,
where we put a superscript $*$ in order to distinguish the $\land$ operation and height of $\lat^*$ from those of $\lat$.
Because $\overline{Y \lor Z} = \bar{Y} \land^* \bar{Z}$ and $|\bar{Y}|^* = |\top| - |Y|$, we have
\begin{align}
    r^*(\bar{X}) = \max_{B \in \base}\left[ |\top| - |\bar{B} \lor \bar{X}|\right] = |\top| - \min_{B \in \base}|B \lor X|.
\end{align}
By the modularity of the height,
$|B \lor X| = |B| + |X| - |B \land X|$.
Hence,
$\min_{B \in \base} |B \lor X| = |B| + |X| - \max_{B \in \base}|B \land X| = r(\top) + |X| - r(X)$.
This completes the proof.
\end{proof}

\subsection{Proofs for Strong Exchange Property and its Applications}
\subsubsection{Birkhoff-type Representation of Modular Lattices}
To prove Theorem~\ref{thm:strongexchange}, we use the Birkhoff-type representation theorem~\cite{Herrmann1994geometric} for modular lattices.
Hence, we summarize the Birkhoff representation.

Let $P$ be a poset and $C$ be a ternary relation on $P$.
We suppose that $C$ is symmetric, that is, $C(q,r, p)$ and $C(r,p,q)$ hold if $C(p,q,r)$ holds for all $p,q,r \in P$.
The symmetric ternary relation $C$ is called a \emph{collinearity relation} if the following condition holds:
All triplets $p,q,r \in P$ with $C(p,q,r)$ are (1) pairwise incomparable and (2) if $p,q, \le s$, then $s \le r$ for all $s \in P$.
An \emph{ordered space} is a poset equipped with a collinearity relation.
A \emph{subspace} of $P$ is an ideal $I$ satisfying the following property:
If $C(p,q,r)$ and $p,q \in I$, then $r \in I$.
A \emph{projective ordered space} is an ordered space satisfying certain axioms~\cite{Herrmann1994geometric}.

Herrmann, Pickering, and Roddy~\cite{Herrmann1994geometric} proposed a Birkhoff-type representation theorem of a nodular lattice.
We can construct a modular lattice $\lat(P)$ from a projective ordered space $P$ as follows.
Let $\lat(P)$ be the family of the subspaces of $P$ and consider the inclusive order.
Conversely, we can construct a projective ordered space $P(\lat)$ from a modular lattice $\lat$ as follows.
Let $P(\lat)$ be the family of join-irreducible elements of $\lat$.
Consider the order on $P(\lat)$ induced by $\lat$.
In addition, $p,q,r \in P(\lat)$ is $C(p,q,r)$ if and only if (1) $p,q,r$ are pairwise incomparable and $p \lor q = q \lor r = r \lor p$.
Then, the following representation theorem holds.
\begin{theorem}[\cite{Herrmann1994geometric}]
Let $\lat$ be a modular lattice and $P$ be a projective ordered space.
Then, $\lat(P)$ is a modular lattice and $P(\lat)$ is a projective ordered space.
Also, $\lat \cong \lat(P(\lat))$ and $P \cong P(\lat(P))$.
\end{theorem}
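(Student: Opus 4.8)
This is a known Birkhoff-type representation theorem, so the plan I would follow is to reconstruct the standard argument for such dualities and isolate where modularity (rather than distributivity) does the work. The heart of the argument is the forward map $\phi \colon \lat \to \lat(P(\lat))$ defined by $\phi(X) = \{ j \in \J \mid j \le X \}$, the set of join-irreducibles lying below $X$. Because $\lat$ has finite length, every element satisfies $X = \bigvee \phi(X)$, which immediately gives injectivity of $\phi$: if $\phi(X) = \phi(Y)$, then $X = \bigvee \phi(X) = \bigvee \phi(Y) = Y$. That $\phi(X)$ is a \emph{subspace} is essentially built into the definition of the collinearity relation $C$: it is clearly a down-set, and if $C(p,q,r)$ with $p,q \le X$, then $r \le r \lor p = p \lor q \le X$, so $r \in \phi(X)$. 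Preservation of meets, $\phi(X \land Y) = \phi(X) \cap \phi(Y)$, holds in any lattice, since $j \le X \land Y$ iff $j \le X$ and $j \le Y$.

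The crux is the join side, equivalently the surjectivity of $\phi$. I would prove the key lemma: if $I$ is a subspace of $P(\lat)$ and $j \in \J$ satisfies $j \le \bigvee I$, then $j \in I$. Granting this, surjectivity follows by taking $X = \bigvee I$ and checking $\phi(X) = I$, and join-preservation follows because $\phi(X \lor Y)$ is then forced to equal the subspace generated by $\phi(X) \cup \phi(Y)$. The lemma itself I would establish by induction on the height of $\bigvee I$, using a geometric exchange step in the spirit of the projective-space axioms: writing $\bigvee I$ as a join of finitely many points of $I$ and applying modularity, a join-irreducible $j$ below the join either already lies below a strictly smaller sub-join (reducing to the inductive hypothesis) or is collinear with two points drawn from the two halves of a split of the join, so that the collinearity-closure of $I$ forces $j \in I$. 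This is exactly the place where modularity is indispensable; in the distributive case collinear triples do not occur and the statement collapses to the classical Birkhoff correspondence between a finite distributive lattice and the down-sets of its join-irreducibles.

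It remains to verify the two structural claims. To show $\lat(P)$ is modular for a projective ordered space $P$, I would check the modular law directly on subspaces, where meet is intersection and join is the generated subspace: for $A \le C$ the inclusion $A \lor (B \land C) \le (A \lor B) \land C$ is automatic, and the reverse inclusion is where the projective axioms on $P$ enter, since they guarantee that a point of $(A \lor B) \land C$ can be routed through a collinear triple with one point in $A$ and one in $B \land C$. For the isomorphism $P \cong P(\lat(P))$ I would identify the join-irreducible elements of $\lat(P)$ with the subspaces generated by single points of $P$, show that $p \mapsto \langle p \rangle$ is an order isomorphism onto these join-irreducibles, and verify that the induced lattice-collinearity $\langle p \rangle \lor \langle q \rangle = \langle q \rangle \lor \langle r \rangle = \langle r \rangle \lor \langle p \rangle$ matches $C(p,q,r)$. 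Assembling the pieces, $\phi$ realizes $\lat \cong \lat(P(\lat))$ and the point identification realizes $P \cong P(\lat(P))$.

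The main obstacle I anticipate is the geometric exchange lemma of the second paragraph together with the reverse modular inclusion of the third: both require translating the abstract projective-ordered-space axioms into concrete point-replacement statements, and both are the precise technical content that the weaker hypothesis of modularity (rather than distributivity) forces one to handle with care. Everything else is the bookkeeping standard for Birkhoff-type dualities.
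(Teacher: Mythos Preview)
The paper does not prove this theorem at all: it is stated with the citation \cite{Herrmann1994geometric} and used as an imported black box (the surrounding text reads ``Herrmann, Pickering, and Roddy proposed a Birkhoff-type representation theorem of a modular lattice'' and then states the theorem). There is therefore no in-paper proof to compare your proposal against.

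As for the proposal itself, it is a reasonable outline of the standard argument and correctly isolates the two places where genuine work is required: the ``geometric exchange'' lemma showing that a join-irreducible below $\bigvee I$ already lies in the subspace $I$, and the reverse inclusion in the modular law for $\lat(P)$. But as written it is a plan rather than a proof. In particular, the inductive step you describe for the key lemma (``either $j$ lies below a smaller sub-join or is collinear with two points drawn from the two halves'') is precisely where the projective-ordered-space axioms, including the regularity axiom the paper quotes as Definition~\ref{def:regularity-axiom}, must be invoked carefully, and you have not actually shown how to produce the required collinear triple from modularity alone. Likewise, your verification that $\lat(P)$ is modular and that $P \cong P(\lat(P))$ is only gestured at. If you intend to supply a self-contained proof rather than cite Herrmann--Pickering--Roddy, those two steps need to be written out in full; otherwise, citing the original as the paper does is the appropriate move.
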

In the rest of this section, we use this identification between a modular lattice $\lat$ and a projective ordered space $P$.
The notation  $p \in X$ means that $p \le X$ for $p \in P(\lat)$ and $X \in \lat$.
Recall that the join-irreducible elements of $\lat$ are denoted by lower-case letters and the others are denoted by upper-case letters.

An important property of projective ordered spaces is the following.

\begin{definition}[Regularity Axiom~\cite{Herrmann1994geometric}]
\label{def:regularity-axiom}
Let $P$ be a projective ordered space and $p,q,r \in P$ such that $C(p,q,r)$.
For all $\underline r \in P$ such that $\underline r \le r$, $\underline r \not \le p$, and $\underline r \not \le q$, there exist $\underline p, \underline q \in P$ such that $\underline p \le p$, $\underline q \le q$, and $C(\underline p, \underline q, \underline r)$.
\end{definition}

\begin{lemma}[\cite{Herrmann1994geometric}]
\label{lem:join-projective-ordered-space}
Let $P$ be a projective ordered space and $S,T \in \lat(P)$ be the subspaces.
Then,
\begin{align}
    S \lor T = S \cup T \cup \{r \in P \mid \text{there exist $s \in S$ and $t \in T$ such that $C(s,t,r)$}\}.
\end{align}
\end{lemma}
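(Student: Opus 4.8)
The plan is to recognize the right-hand side as the least subspace of $P$ containing $S \cup T$, which is exactly $S \lor T$ by the definition of the join in $\lat(P)$. Write $U := S \cup T \cup \{ r \in P \mid \exists\, s \in S,\, t \in T \text{ with } C(s,t,r) \}$.

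First I would dispose of the inclusion $U \subseteq S \lor T$, which is immediate: the join $S \lor T$ is a subspace containing both $S$ and $T$, hence $S \cup T \subseteq S \lor T$, and for any $r$ with $C(s,t,r)$ we have $s \in S \subseteq S \lor T$ and $t \in T \subseteq S \lor T$, so closure of the subspace $S \lor T$ under collinearity forces $r \in S \lor T$.

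For the reverse inclusion it suffices to prove that $U$ is itself a subspace: since $S \lor T$ is the smallest subspace containing $S \cup T \subseteq U$, minimality then yields $S \lor T \subseteq U$. Two properties must be verified. That $U$ is an ideal reduces, using that $S$ and $T$ are already ideals, to the following: given $C(s,t,r)$ with $s \in S$, $t \in T$ and $r' \le r$, either $r' \le s$ (so $r' \in S$), or $r' \le t$ (so $r' \in T$), or $r' \not\le s$ and $r' \not\le t$, in which case the Regularity Axiom (Definition~\ref{def:regularity-axiom}) produces $s' \le s$ and $t' \le t$ with $C(s',t',r')$; since $s' \in S$ and $t' \in T$ by the ideal property, $r' \in U$. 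The second property, closure of $U$ under collinearity, is the crux: given $C(p,q,r)$ with $p,q \in U$, one must show $r \in U$, and I would argue by cases on the membership of $p$ and $q$.

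I expect this collinearity-closure step to be the main obstacle. When $p,q$ both lie in $S$ (respectively both in $T$), closure of the subspace $S$ (respectively $T$) already gives $r$; when one lies in $S$ and the other in $T$, $r \in U$ holds by the very definition of $U$. The delicate cases are those where $p$ (or $q$) already belongs to the collinear part, say $C(s,t,p)$ with $s \in S$, $t \in T$ together with $C(p,q,r)$, so that the witnesses for $r$ must be relocated into $S$ and $T$. Here I would invoke the remaining axioms of a projective ordered space (the exchange/Pasch-type axioms abbreviated in the definition of such spaces) to place $r$ in the subspace generated by $s$, $t$, and $q$, and then use collinearity together with repeated application of the Regularity Axiom to descend to a single join-irreducible witness in $S$ and one in $T$. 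Because $P$ is an ordered (not merely atomic) space, its join-irreducibles may be comparable, which is exactly why the regularity descent --- rather than a bare point-line Pasch argument --- is needed; the bookkeeping of the incomparability conditions built into $C$ and the iteration of this descent is where essentially all of the work lies, the easy cases and the ideal property being routine by comparison.
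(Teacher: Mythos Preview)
The paper does not supply its own proof of this lemma: it is quoted as a result of Herrmann, Pickering, and Roddy \cite{Herrmann1994geometric} and used as a black box in the proof of Lemma~\ref{lem:strengthend-downward-DR}. So there is nothing in the paper to compare your argument against.

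That said, your outline is the standard one and is sound in shape. The easy inclusion $U \subseteq S \lor T$ and the ideal property of $U$ via the Regularity Axiom are fine. You are also right that the only substantive point is collinearity closure of $U$, and that the mixed cases (one or both of $p,q$ lying in the ``collinear part'' of $U$) require the Pasch/exchange-type axioms that are part of the definition of a projective ordered space in \cite{Herrmann1994geometric} but are only alluded to here. Since the present paper does not even state those axioms, a self-contained proof is not possible from the material given; your honest flagging of this is appropriate. If you want to turn the sketch into a proof, you will need to import the precise axiom list from \cite{Herrmann1994geometric} and carry out the case analysis there --- which is exactly what that reference does.
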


Based on these property, we can prove a useful lemma.
\begin{lemma}
\label{lem:strengthend-downward-DR}
Let $\lat$ be a modular lattice and $f \colon \lat \to \mathbb{R}$ be a downward DR-submodular function.
For all $X, Y \in \lat$ and $x \in \adm{X} \cap \adm{X \lor Y}$, there exists $x' \in \adm{X}$ such that $x = x' \bmod X$, and for all $\underline x' \le x'$ with $\underline x' \in \adm{Y}$, we have the following:
\begin{align}
    \label{eq:strengthened-downward-dR}
    f((X \lor Y) \lor x) - f(X \lor Y) \le f(Y \lor \underline x') - f(Y).
\end{align}
\end{lemma}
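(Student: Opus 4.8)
The plan is to derive the strengthened inequality from the ordinary downward DR-submodularity of $f$ applied along the chain $Y \le X \lor Y$, and then to use the Birkhoff-type representation — the join formula of Lemma~\ref{lem:join-projective-ordered-space} together with the regularity axiom (Definition~\ref{def:regularity-axiom}) — to replace the representative supplied by the definition with the join-irreducible representative $x'$ that the statement demands.

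First I would record the relevant structure. Since $\lat$ is modular we have $\adm{\cdot}=\coex{\cdot}$, so $x$ is join-irreducible and both $X \prec X \lor x$ and $X \lor Y \prec (X \lor Y)\lor x$. Because $X \land x \le (X\lor Y)\land x \prec x$ and a join-irreducible has a unique lower cover, $X \land x = (X\lor Y)\land x =: \mathring{x} \le X$. Applying the downward DR-submodularity of $f$ with lower element $Y$, upper element $X \lor Y$, and increment $x \in \adm{X \lor Y}$, I obtain $z$ with $(X\lor Y)\lor z = (X\lor Y)\lor x$ such that
\begin{equation}
f((X\lor Y)\lor x) - f(X\lor Y) \le f(Y \lor a) - f(Y)
\tag{$\star$}
\end{equation}
for every $a \in \adm{Y}$ with $a \le z$. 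Everything then reduces to matching, for each admissible sub-point of the representative $x'$, a point $a \le z$ lying in the same $Y$-coset, so that $(\star)$ applies verbatim.

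Next I would fix $x'$. By the join formula the representatives of $x \bmod X$ are exactly the join-irreducibles below $X \lor x$ and off $X$; using $\mathring{x}\le X$ I would choose among them an $x'$ whose intersection with $X \lor Y$ sinks into $Y$, i.e.\ $x' \land (X\lor Y)\le Y$. This makes every sub-point $\underline x' \le x'$ that lies below $X \lor Y$ automatically lie below $Y$, hence fall outside $\adm{Y}$ and be irrelevant. Since $X \prec X \lor x' = X \lor x$, lower semimodularity gives $X \land x' \prec x'$, so indeed $x' \in \adm{X}$ and $x' = x \bmod X$. It then remains to treat a sub-point $\underline x' \le x'$ with $\underline x' \in \adm{Y}$ and $\underline x' \not\le X \lor Y$. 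Here $(X\lor Y)\lor \underline x' = (X\lor Y)\lor x = (X\lor Y)\lor z$, so $\underline x' \le (X\lor Y)\lor z$ and the join formula places $\underline x'$ on a line $C(t,s,\underline x')$ with $t \le X \lor Y$ and $s \le z$. According to the join formula for $X \lor Y$, the point $t$ lies in $X$, lies in $Y$, or is collinear with an $X$-point and a $Y$-point; in each case I would apply the regularity axiom to descend the configuration to a collinearity $C(y_{0},a,\underline x')$ with $y_{0}\le Y$ and $a \le z$. Collinearity with the $Y$-point $y_{0}$ gives $Y \lor a = Y \lor \underline x'$, while $a \in \adm{Y}$ follows from $\underline x' \in \adm{Y}$; then $(\star)$ applied to $a$ yields the inequality because $f(Y \lor a)=f(Y \lor \underline x')$.

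The main obstacle is precisely this coset-matching: the definition of downward DR-submodularity controls representatives only modulo $X \lor Y$, whereas the statement asks for a representative modulo the smaller element $X$, and a point $\underline x'$ in the $X$-coset of $x$ need not lie below $z$ (already in $\lat(\mathbb{R}^2)$, with $X=\bot$, the definition may return $z=\mathrm{span}(e_{1}+e_{2})$ while $x=\mathrm{span}(e_{1})$). Bridging the gap forces one to work inside the projective ordered space, using the join formula to place $\underline x'$ on a line through $X \lor Y$ and $z$ and the regularity axiom to push that line down to $Y$; the accompanying choice of $x'$ with $x' \land (X\lor Y)\le Y$ is what discards the degenerate sub-points contained in $X \lor Y$. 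In the atomic case every join-irreducible is an atom, each representative meets $X \lor Y$ only in $\bot \le Y$, and $\underline x'=x'$, so the construction collapses and the lemma is immediate from $(\star)$.
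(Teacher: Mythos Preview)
Your plan has the right ingredients (apply downward DR-submodularity along $Y\le X\lor Y$, then use the join formula and the regularity axiom), but the way you assemble them contains a genuine gap: the representative $x'$ with $x'\land(X\lor Y)\le Y$ need not exist. Since every representative $x'$ of $x\bmod X$ lies in $X\lor x$, modularity gives $x'\land(X\lor Y)\le(X\lor x)\land(X\lor Y)=X$, so your condition is equivalent to asking that the unique lower cover $\mathring{x'}$ of $x'$ lie in $X\land Y$. Already in the distributive lattice $C_3\times C_3$ with $X=(1,0)$, $Y=(0,2)$, $x=(2,0)$ the only representative is $x'=x=(2,0)$, whose lower cover $(1,0)$ is not below $Y$; and $\underline x'=(1,0)\in\adm{Y}$ is a sub-point you cannot dismiss. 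Your fallback case analysis (decompose $\underline x'$ in $(X\lor Y)\lor z$, then decompose the $X\lor Y$-component $t$ again) does not close either: when $t\le X$ there is no $Y$-point in sight, and even when $t\le Y$ you still need $a\le z$ with $a\in\adm{Y}$ and $Y\lor a=Y\lor\underline x'$, which the double decomposition does not provide.

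The paper avoids all of this by applying the join formula to the \emph{representative} $w$ supplied by DR-submodularity, and in the decomposition $(X\lor Y)\lor x=(X\lor x)\lor Y$ rather than $(X\lor Y)\lor z$. This places $w$ either in $X\lor x$ (take $x'=w$) or on a line $C(x_w,y_w,w)$ with $x_w\le X\lor x$ and $y_w\le Y$ (take $x'=x_w$); in both cases $x'\not\le X$ since $w\not\le X\lor Y$, so $x'\in\adm{X}$ and $x'=x\bmod X$. Now for any $\underline x'\le x'$ with $\underline x'\in\adm{Y}$, either $\underline x'\le w$ and $(\star)$ applies directly, or one application of regularity to the line $C(x_w,y_w,w)$ at the sub-point $\underline x'\le x_w$ yields $\underline w\le w$, $\underline y\le y_w\le Y$ with $C(\underline x',\underline y,\underline w)$, whence $Y\lor\underline x'=Y\lor\underline w$ and $(\star)$ finishes. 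No side condition on $x'$ is needed, and in your $C_3\times C_3$ example this correctly returns $x'=w=(2,0)$ and handles $\underline x'=(1,0)\le w$ immediately.
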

\begin{proof}
By the downward DR-submodularity, there exists $w = x \bmod X \lor Y$ such that
\begin{align}
    f((X \lor Y) \lor w) - f(X \lor Y) \le f(Y \lor \underline w) - f(Y),
\end{align}
for all $\underline w \le w$ with $\underline w \in \adm{Y}$.
By Lemma~\ref{lem:join-projective-ordered-space} and $w \in (X \lor Y) \lor x = (X \lor x) \lor Y$, at least one of the following holds:
\begin{itemize}
    \item[(1)] $w \in X \lor x$;
    \item[(2)] $w \in Y$;
    \item[(3)] There exists $x_w \in X \lor x$ and $y_w \in Y$ such that $C(x_w, y_w, w)$. 
\end{itemize}
In the case (1), $x = w \bmod X \lor Y$ implies that $w \not \in X$.
Therefore, $w = x \bmod X$ and we can take $x'$ as $x' := w$.
Case (2) contradicts to the fact $x = w \bmod X \lor Y$.
In case (3), we can take $x'$ as $x' = x_w$.
Indeed, $x = w \bmod X \lor Y$ implies that $x_w \not \in X$.
In addition, let $\underline x' \le x'$ such that $\underline x' \in \adm{Y}$.
If $\underline x' \le w$, then (\ref{eq:strengthened-downward-dR}) holds by the choice of $w$.
Otherwise, $\underline x' \not \le y_w$ because $\underline x' \in \adm{Y}$.
Therefore, the regularity axiom (Definition~\ref{def:regularity-axiom}) implies that there exist $\underline w \le w$ and $\underline y \le y_w$ such that $C(\underline x', \underline w, \underline y)$.
In particular, $\underline x' = \underline w \bmod Y$.
Therefore, (\ref{eq:strengthened-downward-dR}) holds by the choice of $w$.
\end{proof}

\subsubsection{Proofs}

\begin{proof}[Proof of Theorem~\ref{thm:strongexchange}]

We first construct $H \ge \mathring{X}$ as follows.
Set $H \gets \mathring{X}$.
If there exists $y \le Y$ such that $y \in \adm{H}$ and $r(H \lor y) = r(H)$, set $H \gets H \lor y$.
Continuing this process, the enlargement stops at some steps and we obtain a maximal $H$.
This $H$ has the following properties.
First, $H = \mathring{X} \lor (Y \land H)$. Indeed, by construction, $H = \mathring{X} \lor y_1 \lor y_2 \lor \dots \lor y_l$.
Here, $y_i$ is the $i$-th $y$ in the construction of $H$.
Hence, we have $H = \mathring{X} \lor (Y \land H)$.
Second, for all $y \in \adm{H}$ with $y \in Y$, we have $r(H \lor y) = r(H) + 1$.
Thirdly, $x \in \adm{H}$ and $r(H \lor x) = r(H) + 1$ for all $x \in \adm{X}$ with $\mathring{X} \lor x = X$. 
Indeed, if $x \le H$, then $H \ge \mathring{X} \lor x = X$ and $r(X) = r(\mathring X) + 1$.

We next take $w$.
Let $x \le X$ be some join-irreducible element with $\mathring{X} \lor x = X$.
Because $r(H \lor x) - r(H) = 1$ and $H = \mathring X \lor (Y \land H)$, 
Lemma~\ref{lem:strengthend-downward-DR} implies that there exist $w \in \adm{H}$ such that $w = x \bmod \mathring X$ and $r((Y \land H) \lor \underline w) - r(Y \land H) = 1$ for all $\underline{w} \le w$ with $\underline{w} \in \adm{Y \land H}$.
Because $Y \land H \in \ind$, we have $(Y \land H) \lor \underline w \in \ind $  for all $\underline{w} \le w'$ with $\underline{w} \in \adm{Y \land H}$.

We then take $Y'$. 
Because $((Y \land H) \lor \underline w) \lor Y = Y \lor \underline{w}$ for any $\underline{w}$ in the statement of this theorem, (B2) implies the existence of $Y' \in \base$ such that $(Y \land H) \lor \underline{w} \le Y' \le  Y \lor \underline{w}$.
Note that $Y' \ge (Y \land H) \lor \underline{w} \ge (Y \land X) \lor \underline{w}$.

We finally take $\underline y$ and $v$.
Let $\underline y$ be any $\underline y \le y$ such that $\underline y \in \adm{H}$.
Because $r(H \lor \underline y) - r(H) = 1$ and $H = (Y \land H) \lor \mathring X$, Lemma~\ref{lem:strengthend-downward-DR} implies that there exists $v = y \bmod Y \land H$ such that $r(\mathring X \lor \underline v) - r(\mathring X) = 1$ for all $\underline v \le v$ with $\underline v \in \adm{\mathring X}$.
Because $Y \land H \le Y \land Y'$, we have $v = y \bmod Y \land Y'$.
Because $\mathring X \in \ind$ and $\mathring X \prec X \in \base$, we have $\mathring X \lor \underline v \in \ind$ for all $\underline v \le v$ with $\underline v \in \adm{\mathring X}$.
Because $v \not \in H$ and $\mathring X \le H$, the set $\{\underline v \le \adm{\mathring X} \mid \underline v \le v\}$ is non-empty.

\end{proof}

\begin{proof}[Proof of Proposition~\ref{prop:strongex-to-base}]
Let $X \le Y \in \lat$ and $B_1, B_2 \in \base$ with $B_1 \ge X$ and $B_2 \le Y$.
We prove (B2) by the induction on $k := |B_1| - |B_1 \land Y|$.
If $k=0$, then $B_1 \le Y$ and the statement trivially holds.
Consider the case of $k > 0$.
Apply Theorem~\ref{thm:strongexchange} to $B_1, B_2$, and arbitrary $\mathring{X}$ with $B_1 \land Y \le \mathring{X} \prec B_1$.
Here, $\mathring X \ge X$.
Then, we obtain $B_1'  \in \base$ such that $|B_1'| - |B_1' \land Y| = k - 1$ and $B_1' \ge X$.
By applying an induction hypothesis to $X \le B_1'$ and $Y \ge B_2$, we have the following statement.
\end{proof}

\begin{proof}[Proof of Theorem~\ref{thm:greedy-valuated}]
We prove that, for each $t$, the solution $X_t$ maximizes $f$ among all elements in $\lat$ whose height is $t$ by the induction on $t$.
The case $t = 0$ is trivial.
For $t \ge 1$, let $O_t$ be the optimal solution of height $t$ such that $|O_t \land X_t|$ is maximum.
If $X_t \neq O_t$, then
\begin{align}
    w(X_t) + w(O_t) 
   & = w(X_{t-1} \lor a_t) + w(O_t) \\
    &\le w(X_{t-1} \lor b) + w(O_t'),
\end{align}
where we obtain $X_{t-1} \lor b$ and $O_t'$  by applying the definition of the valuated supermatroid for $(X_t, X_{t-1}, O_t)$. 
Because the greedy property of the algorithm implies that $w(X_{t-1} \lor a_t) \ge w(X_{t-1} \lor b)$, we have  $O_t'$ as an optimal solution.
However, $|X_t \land O_t'| > |X_t \land O_t|$, it contradicts the choice of $O_t$.
Therefore, we have $X_t = O_t$.
\end{proof}

\begin{proof}[Proof of Proposition~\ref{prop:pca-is-valuated-supermatroid}]
For simplicity, we denote $\omega_{\mathrm{PCA}}$ by $\omega$ in this proof.
If $|X| - |X \land Y| = 0, 1$, the function $\omega$ trivially satisfies the definition of a valuated supermatroid.
Let us consider the case of $|X| - |X \land Y| \ge 2$.
Let $d' = (\Pi_X - \Pi_{X \land Y}) d$.
If $d' \not \in \mathring X$, then $\omega(\mathring X) = \omega(X)$.
Take any $w$ such that $\mathring X \lor w = X$. 
In addition, any $Y'$ such that $\omega(Y') \ge \omega(Y)$ and $(X \land Y) \lor w \le Y' \prec Y \lor w$, i.e.,
take $Y'$ such that $d'' \in Y'$, where $d'' = (\Pi_Y - \Pi_{X \land Y}) d$.
Such $Y'$ exists because $|Y| - |X \land Y| \ge 2$.
We then have $\omega(X) + \omega(Y) \le \omega(\mathring{X} \lor v) + \omega(Y')$ by taking an arbitrary $v$ such that $Y' \lor v = Y \lor w$.
Therefore, $\omega$ is a valuated supermatroid.
If $d \in \mathring X$, take $w = \mathrm{span}(d')$.
Then, $\mathring X \lor w = X$. 
Let  $v = \mathrm{span}(d'')$.
Take any $Y'$ with $(X \land Y) \lor w \le Y' \prec Y \lor w$ and $Y' \lor v = Y \lor w$.
Then, $\omega(Y') \ge \omega((X \land Y) \lor w) = \omega(X)$ because $d'$ is orthogonal to $X \land Y$. 
In addition, we have $Y' \lor v = Y \lor w$ and $\omega(\mathring X \lor v) \ge \omega((X \land Y) \lor v) = \omega(Y)$ because $v$ is orthogonal to $X \land Y$.
In addition, because $\omega(X) + \omega(Y) \le \omega(\mathring{X} \lor v) + \omega(Y')$, we have proven that $\omega$ is a valuated supermatroid.
\end{proof}

To prove Theorem~\ref{thm:greedy-DR-matroid}, we need the following lemmas.
\begin{lemma}
\label{lem:diamond-matching}
Suppose that $\lat$ is an atomic modular lattice.
Let $X, O \in \base$.
In addition, let $X \land O = Z_0 \prec Z_1 \prec \dots \prec Z_\alpha = X$ be any chain.
Then, we can take a chain $X \land O = Y_0 \prec Y_1 \prec \dots \prec Y_\alpha = O$ that satisfies the following condition:
For all $i =1, 2, \dots, \alpha$, there exists $y_i$ such that $Y_i = Y_{i-1} \lor y_i$ and $Z_{i-1} \lor y_i \in \ind$.
\end{lemma}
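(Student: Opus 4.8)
The plan is to prove a slightly stronger statement by induction on $\alpha$, because the natural recursion descends to elements that are no longer bases. Concretely, I would prove: for any $P, Q \in \ind$ with $|P| = |Q|$ (not necessarily maximal) and any maximal chain $P \land Q = Z_0 \prec \dots \prec Z_\alpha = P$, there is a chain $P \land Q = Y_0 \prec \dots \prec Y_\alpha = Q$ together with join-irreducibles $y_1, \dots, y_\alpha$ such that $Y_i = Y_{i-1} \lor y_i$ and $Z_{i-1} \lor y_i \in \ind$. The Lemma is the special case $P = X$, $Q = O \in \base$, using Lemma~\ref{lem:base-sameheight} to get $|X| = |O|$ and the modular height identity $|P \lor Q| + |P \land Q| = |P| + |Q|$ to guarantee that both chains share the common length $\alpha = |P| - |P \land Q| = |Q| - |P \land Q|$.

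For the inductive step I would peel the top of the given chain and build the matching from the top down, so as to leave the original chain $Z_0, \dots, Z_{\alpha-1}$ intact for the recursive call. (This alignment is the reason a base-only induction — for instance one driven directly by Theorem~\ref{thm:strongexchange} applied to the new base $\mathring{X} \lor v$ and $O$ — fails: it replaces each $Z_j$ by $Z_j \lor v$ and thereby shifts the diagonal pairing $Z_{i-1} \mapsto y_i$.) Write $I = P \land Q$ and $\mathring{P} = Z_{\alpha-1}$, so $I \le \mathring{P} \prec P$, $\mathring{P} \land Q = I$, and $|\mathring{P}| = |P| - 1 < |Q|$. Applying the augmentation axiom (I2) (Theorem~\ref{thm:modular-ind-height}) to $\mathring{P}, Q \in \ind$ yields $J \in \ind$ with $\mathring{P} < J \le \mathring{P} \lor Q$; replacing $J$ by a cover $\mathring{P} \prec J' \le J$ and using the modular law $J' = \mathring{P} \lor (J' \land Q)$, I can pick a join-irreducible $y_\alpha \le J' \land Q \le Q$ with $\mathring{P} \lor y_\alpha = J' \in \ind$ and $y_\alpha \not\le \mathring{P}$. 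This $y_\alpha$ is the top matching element.

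Next I must choose $Y_{\alpha-1}$. Since $y_\alpha \le Q$ and $y_\alpha \not\le I$, the element $\tilde y = I \lor y_\alpha$ is an atom of the interval $[I, Q]$. Because $\lat$ is atomic and modular it is a complemented modular lattice, hence relatively complemented, so $\tilde y$ has a complement $\hat Q$ in $[I, Q]$: namely $I \le \hat Q$, $\hat Q \lor \tilde y = Q$, and $\hat Q \land \tilde y = I$. The height identity then forces $\hat Q \prec Q$, and $\hat Q \lor y_\alpha = Q$ with $y_\alpha \not\le \hat Q$. Crucially, from $I \le \hat Q \le Q$ one gets $\mathring{P} \land \hat Q = I$ for free, since it lies between $I$ and $\mathring{P} \land Q = I$. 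I then set $Y_\alpha = Q$, $Y_{\alpha-1} = \hat Q$, and recurse: $(\mathring{P}, \hat Q)$ is again a pair of independent elements with meet $I$, equal height $|P| - 1$, and distance $\alpha - 1$, while $Z_0 \prec \dots \prec Z_{\alpha-1} = \mathring{P}$ is a maximal chain from $I$ to $\mathring{P}$. The induction hypothesis supplies $I = Y_0 \prec \dots \prec Y_{\alpha-1} = \hat Q$ with $y_1, \dots, y_{\alpha-1}$ satisfying $Z_{i-1} \lor y_i \in \ind$; appending the top step with $y_\alpha$, where $Z_{\alpha-1} \lor y_\alpha = \mathring{P} \lor y_\alpha \in \ind$, completes the chain.

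The main obstacle, and the only place the \emph{atomic} hypothesis is truly used, is the construction of $\hat Q$: I need a coatom of $[I, Q]$ that misses the chosen augmenting atom $y_\alpha$, and this is exactly what relative complementation of the complemented modular interval provides. The remaining bookkeeping is mild — verifying that the augmenting element can be taken inside $Q$ (via the modular law above), that $\hat Q$ has the correct height, and that descending to $(\mathring{P}, \hat Q)$ preserves the meet $I$ so that the diagonal condition $Z_{i-1} \lor y_i \in \ind$ transfers verbatim, which it does precisely because the top-down peeling leaves $Z_0, \dots, Z_{\alpha-1}$ unchanged.
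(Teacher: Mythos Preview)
Your argument is correct and takes a genuinely different route from the paper's. The paper proves the lemma by invoking the strong basis exchange property (Corollary~\ref{cor:strongexchange}) at the top of the chain: applied to the pair of bases $(X,O)$ and $Z_{\alpha-1}\prec X$, it produces $O'\in\base$ with $(X\land O)\lor x\le O'\prec O\lor x$ and an element $o\le O$ with $Z_{\alpha-1}\lor o\in\base$; it then sets $Y_{\alpha-1}=O'\land O$, checks $Y_{\alpha-1}\lor o=O$ via the modular identity, and recurses on $(Z_{\alpha-1},Y_{\alpha-1})$ inside the \emph{truncated} supermatroid $\{l\in\lat:|l|\le k-1\}$, where these two elements are again bases. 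You instead strengthen the induction hypothesis to arbitrary independent pairs $P,Q\in\ind$ of equal height, obtain $y_\alpha$ directly from the augmentation axiom (I2) and the modular law, and manufacture the coatom $\hat Q=Y_{\alpha-1}$ by relative complementation of the atom $I\lor y_\alpha$ in $[I,Q]$, which is available because a finite-length atomic modular lattice is complemented, hence relatively complemented. The upshot is that your proof is strictly more elementary: it bypasses the strong exchange theorem (and thus the Birkhoff-type representation and the DR-submodularity of the rank function), and your strengthened statement makes the recursion cleaner than the paper's truncation device. Conversely, the paper's route makes explicit that the lemma is really a consequence of strong exchange, tying it into the broader narrative of Section~\ref{subsubsec:DR-submodular-opt}.
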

\begin{proof}
We prove this lemma by the induction on $\alpha = |X| - |X \land O|$.
If $\alpha = 0$, then $X \land O = X$ and the lemma trivially holds.
We consider the case $\alpha > 0$.
Let $Y_\alpha = O$.
By applying a strong basis exchange property (Corollary~\ref{cor:strongexchange}) to the pair $(X, O)$ and $Z_{\alpha -1} \prec X$, we obtain the following:
$x$, such that $Z_{\alpha -1} \lor x = X$;
$O' \prec O \lor x$, such that $O' \in \base$ and $O' \ge (X \land O) \lor x$;
and $o \in O$, such that $O' \lor o = O \lor x$ and $Z_{\alpha-1} \lor o \in \base$.
Let $Y_{\alpha -1} = O' \land O$ and $y_\alpha = o$.
The modular identity implies that $Y_{\alpha-1} \lor y_\alpha = (O' \land O) \lor o = (O' \lor o) \land O = O$. 
By applying the induction hypothesis on  $Y_{\alpha-1}$, the truncated chain $Z_0 \prec Z_1 \prec \dots \prec Z_{\alpha-1}$, and the truncated modular supermatroid on $\{ l \in \lat \mid |l| \le k-1\}$, we can take a chain $Y_0 \prec Y_1 \prec \dots \prec Y_{\alpha-1}$ satisfying the condition of this Lemma.
Then, $Y_0 \prec Y_1 \prec \dots \prec Y_{\alpha-1} \prec Y_{\alpha}$ is the desired chain.
\end{proof}

\begin{lemma}
\label{lem:matchin-lower}
Suppose that $\lat$ is an atomic modular lattice.
Let $X, O \in \base$.
Let $\perp = X_0 \prec X_1 \prec \dots \prec X_k = X$ be any chain.
Then, we can take a chain $X \land O = Y_0 \prec Y_1 \prec \dots \prec Y_\alpha = O$ that satisfies the following condition:
For all $i =1, 2, \dots, \alpha$, there exists $y_i$ and $j_i \in [k]$ such that $Y_i = Y_{i-1} \lor y_i$ and $X_{j_i} \lor y_i \in \ind$.
Furthermore, $j_i$ is injective with respect to $i$.
\end{lemma}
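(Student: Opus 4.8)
The plan is to reduce the statement to the already-established diamond matching lemma (Lemma~\ref{lem:diamond-matching}) by transforming the given chain from $\bot$ to $X$ into a chain from $X \land O$ to $X$ via joining each term with $X \land O$. The point of the lemma is that the prescribed chain grows $X$ from the bottom of $\lat$, whereas Lemma~\ref{lem:diamond-matching} expects a chain that grows $X$ from $X \land O$; the join operation bridges this gap, and the ideal property of $\ind$ then lets me pull the independence assertion back down to the original levels $X_{j_i}$.

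First I would set $W_j := X_j \lor (X \land O)$ for $j = 0, 1, \dots, k$. Since $\lat$ is modular and hence upper semimodular, Lemma~\ref{lem:prec-USM} applied to $X_{j-1} \prec X_j$ gives $W_{j-1} \preceq W_j$ for every $j$, i.e.\ consecutive terms are either equal or form a cover. As $W_0 = X \land O$ and $W_k = X$, the Jordan--Dedekind property forces exactly $\alpha := |X| - |X \land O|$ of these steps to be strict covers. Let $a_1 < a_2 < \dots < a_\alpha$ be the indices with $W_{a_i - 1} \prec W_{a_i}$, put $Z_0 := X \land O$ and $Z_i := W_{a_i}$. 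Because $W$ is constant between consecutive cover positions, $X \land O = Z_0 \prec Z_1 \prec \dots \prec Z_\alpha = X$ is a genuine maximal chain and, crucially, $Z_{i-1} = W_{a_i - 1}$.

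Next I would feed this chain $Z$ into Lemma~\ref{lem:diamond-matching}, obtaining a chain $X \land O = Y_0 \prec \dots \prec Y_\alpha = O$ (note $|O| = |X| = k$ since both are bases, by Lemma~\ref{lem:base-sameheight}, so the lengths match) together with join-irreducibles $y_i$ such that $Y_i = Y_{i-1} \lor y_i$ and $Z_{i-1} \lor y_i \in \ind$. I would then define $j_i := a_i - 1$. Since $X_{j_i} = X_{a_i - 1} \le X_{a_i - 1} \lor (X \land O) = W_{a_i - 1} = Z_{i-1}$, we get $X_{j_i} \lor y_i \le Z_{i-1} \lor y_i \in \ind$, and because $\ind$ is an ideal this yields $X_{j_i} \lor y_i \in \ind$. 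Finally, $a_1 < \dots < a_\alpha$ makes $i \mapsto j_i$ strictly increasing, hence injective, which is exactly the required distinctness.

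The only genuine work sits in the first step: verifying that joining the given chain with $X \land O$ produces a cover-or-equal chain and correctly locating the $\alpha$ cover positions. I do not anticipate a serious obstacle; the one subtle bookkeeping point is to match step $i$ to the level \emph{below} the $i$-th cover, i.e.\ to choose $j_i = a_i - 1$ rather than $a_i$, because only then does $X_{j_i} \le Z_{i-1}$ hold, which is precisely what lets the ideal property transfer independence from $Z_{i-1} \lor y_i$ down to $X_{j_i} \lor y_i$.
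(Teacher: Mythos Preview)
Your proposal is correct and follows essentially the same approach as the paper: both construct a chain $Z_0\prec\dots\prec Z_\alpha$ from $X\land O$ to $X$ out of the given chain, invoke Lemma~\ref{lem:diamond-matching}, and then set $j_i$ to be one less than the index of the $i$-th ``nontrivial'' step so that $X_{j_i}\le Z_{i-1}$ lets the ideal property finish. The only cosmetic difference is that the paper detects these nontrivial steps via the meet $X_j\land O$ while you detect them via the join $X_j\lor(X\land O)$; by the modular height identity these criteria coincide, and the resulting $Z$-chain and indices $j_i$ are identical.
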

\begin{proof}
For any $i \in [k]$, one of the following holds: $X_i \land O = X_{i-1} \land O$ or $X_i \land O \succ X_{i-1} \land O$.
If the latter case holds, then we can take $o_i \in O$ such that $X_i = X_{i-1} \lor o_i$.
Otherwise, we take $x_i \in X \setminus O$ such that $X_i = X_{i-1} \lor x_i$.
Then, we have $X = (o_{m_1} \lor o_{m_1} \lor \dots \lor o_{m_\beta}) \lor (x_{n_1} \lor x_{n_2} \lor \dots \lor x_{n_\alpha})$.
Notice that $X \land O = (o_{m_1} \lor o_{m_1} \lor \dots \lor o_{m_\beta})$ and $\beta = k - \alpha$.
By using
$\{x_{n_i}\}_{i=1,2,\dots,\alpha}$, we construct a chain $X \land O = Z_0 \prec Z_1 \prec \dots Z_\alpha = X$ as $Z_i = (X \land O) \lor x_{n_1} \lor x_{n_2} \lor \dots \lor x_{n_i}$.
By using Lemma \ref{lem:diamond-matching} for $\{Z_i\}_{i=1,2,\dot,\alpha}$, we can take a desired chain $X \land O = Y_0 \prec Y_1 \prec \dots \prec Y_k = O$.
Indeed, we can take $j_i = n_i - 1$ because $X_{n_\alpha - 1} \lor y_i \le Z_{\alpha -1 } \lor y_i \in \ind$.
This $j_i$ is injective with respect to $i$.
\end{proof}

\begin{lemma}
\label{lem:matching}
Suppose that $\lat$ is an atomic modular lattice.
Let $X, O \in \base$.
Let $\perp = X_0 \prec X_1 \prec \dots \prec X_k = X$ be any chain.
Then, we can take a chain $X = W_0 \prec W_1 \prec \dots \prec W_\alpha = X \lor O$ that satisfies the following conditions:
(1) We can take $w_i$ such that $W_i = W_{i-1} \lor w_i$.
(2) For all $i= 1,2,\dots, \alpha$, there exists $j_i$ such that $X_{j_i} \lor w_i \in \ind$ and $j_i$ is injective with respect to $i$.
\end{lemma}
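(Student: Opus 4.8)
The plan is to reduce this directly to Lemma~\ref{lem:matchin-lower} by transporting the chain it produces across the join with $X$. The starting point is a height count: since $X, O \in \base$ have the same height by Lemma~\ref{lem:base-sameheight}, the modularity of the height gives $|X \lor O| = |X| + |O| - |X \land O|$, so $|X \lor O| - |X| = |O| - |X \land O| = \alpha$. Hence the target chain from $X$ to $X \lor O$ must have exactly $\alpha$ cover steps, the same length as a chain from $X \land O$ to $O$.

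First I would apply Lemma~\ref{lem:matchin-lower} to the given chain $\perp = X_0 \prec \dots \prec X_k = X$ to obtain a chain $X \land O = Y_0 \prec Y_1 \prec \dots \prec Y_\alpha = O$ together with join-irreducibles $y_i$ and injective indices $j_i$ satisfying $Y_i = Y_{i-1} \lor y_i$ and $X_{j_i} \lor y_i \in \ind$. I would then set $W_i := X \lor Y_i$ and $w_i := y_i$. The endpoints are immediate: $W_0 = X \lor (X \land O) = X$ and $W_\alpha = X \lor O$. The recursion also transfers verbatim, since $W_i = X \lor (Y_{i-1} \lor y_i) = (X \lor Y_{i-1}) \lor y_i = W_{i-1} \lor w_i$, which gives condition (1). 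Condition (2) is inherited with no change: the same $w_i = y_i$ and the same injective $j_i$ satisfy $X_{j_i} \lor w_i = X_{j_i} \lor y_i \in \ind$.

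The one thing that still needs justification is that each $W_{i-1} \prec W_i$ is a genuine cover rather than an equality. Here I would invoke upper semimodularity: applying Lemma~\ref{lem:prec-USM} to $Y_{i-1} \prec Y_i$ and joining with $X$ yields $W_{i-1} = X \lor Y_{i-1} \preceq X \lor Y_i = W_i$, so each increment $|W_i| - |W_{i-1}|$ is $0$ or $1$. Summing over $i = 1, \dots, \alpha$ gives $|W_\alpha| - |W_0| = |X \lor O| - |X| = \alpha$, and since the $\alpha$ increments are each at most $1$ and sum to $\alpha$, every one of them equals $1$. Thus $W_{i-1} \prec W_i$ for all $i$, completing the chain.

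The main (and only mild) obstacle is exactly this collapse check: a priori joining a chain with $X$ could identify consecutive elements, but the height-counting argument rules this out cleanly once the length $\alpha$ is pinned down by modularity. No case analysis on the collinearity structure of the atomic modular lattice is needed, so the atomicity hypothesis plays no role here beyond what is already used inside Lemma~\ref{lem:matchin-lower}.
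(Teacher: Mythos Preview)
Your proof is correct and follows essentially the same route as the paper: apply Lemma~\ref{lem:matchin-lower}, set $W_i := X \lor Y_i$ with $w_i := y_i$, and use the modular height identity $|X \lor O| - |X| = \alpha$ to force every step to be a genuine cover. Your version is actually a bit more explicit than the paper's, since you invoke Lemma~\ref{lem:prec-USM} to bound each increment by $1$ before the counting argument, whereas the paper leaves that bound implicit.
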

\begin{proof}
Obtain $\{Y_i\}$, $y_i$, and $j_i$ $(i=1,2,\dots,\alpha)$ by applying Lemma~\ref{lem:matchin-lower} to $\{X_i\}_{i=0,1,\dots,k}$ and $O$.
Let $W_i = X \lor Y_i$.
It suffices to show that $w_i = y_i$ satisfies the condition $W_i = W_{i-1} \lor y_i$.
Notice that
$X \lor y_1 \lor y_2 \lor \dots \lor y_\alpha = X \lor ((X \land O) \lor y_1 \lor y_2 \lor \dots \lor y_\alpha)) = X \lor O$.
In addition, $W_i =  X \lor ((X \land O) \lor y_1 \lor y_2 \lor \dots \lor y_i))$.
Because the modularity of the height implies that $\alpha = |X \lor O| - |X|$, 
we have $|W_i| - |W_{i-1}| = 1$ for all $i = 1,2,\dots, \alpha$.
This ensures that $W_i = W_{i-1} \lor y_i$ because $W_i \ge W_{i-1} \lor y_i$.
\end{proof}

\begin{proof}[Proof of Theorem~\ref{thm:greedy-DR-matroid}]
Let $X$ be the output of the algorithm and $O$ be the optimal solution.
By Lemma \ref{lem:matching}, we can take a chain $X = W_0 \prec W_1 \prec \dots \prec W_\alpha = X \lor O$.
Then,
\begin{align}
    f(X \lor O) - f(X) 
    = \sum_{i=1}^\alpha (f(W_{i+1}) - f(W_i))
    = \sum_{i = 1}^\alpha (f(W_i \lor w_i) - f(W_i)) =: (*).
\end{align}
Here, we take $w_i$ as in Lemma \ref{lem:matching}.
By the second condition of Lemma \ref{lem:matching}, the strong DR-submodualrity, and the greedy property, we have
\begin{align}
    (*) 
    \le \sum_{i=1}^\alpha f(X_{j_i - 1} \lor w_i) - f(X_{j_i -1})
    &\le \sum_{i=1}^\alpha f(X_{j_i}) - f(X_{j_i - 1})\\
    &\le \sum_{i=1}^k f(X_{i}) - f(X_{i-1})
    \le f(X),
\end{align}
where $j_i$ is defined as in Lemma \ref{lem:matching} and we used the fact that $j_i$ is injective with respect to $i$.
\end{proof}

\begin{remark}
\label{rem:downward-max-generalization-hard}
A generalization of Theorem~\ref{thm:greedy-DR-matroid} to general modular lattices is difficult because we cannot prove Lemma~\ref{lem:diamond-matching}, which was a key to proving Theorem~\ref{thm:greedy-DR-matroid}.
This difference comes from that of the strong exchange property.
In atomic modular lattices, we have $Y' \lor v = Y \lor w$, where $w$ and $v$ are the ``exchanged elements'' (Corollary~\ref{cor:strongexchange}).
However, in general modular lattices, the exchanged element $v$ might be in $Y'$ and $Y' \lor v = Y' < Y \lor \underline w$.
In such cases, we cannot guarantee the property $Y_i = Y_{i-1} \lor y_i$ in the proof of Lemma~\ref{lem:diamond-matching}.

In addition, a generalization of Theorem~\ref{thm:greedy-DR-matroid} to downward DR-submodular functions is difficult to achieve.
In the proof of Theorem~\ref{thm:greedy-DR-matroid}, we used the strong DR-submodularity and obtained the following:
\begin{align}
    f(W_i \lor w_i) - f(W_i) \le f(X_{j_i - 1} \lor w_i) - f(X_{j_i-1}).
\end{align}
If we use the downward DR-submodularity instead of the strong DR-submodularity, we obtaine
\begin{align}
    f(W_i \lor w_i) - f(W_i) \le f(X_{j_i - 1} \lor v_i) - f(X_{j_i-1}),
\end{align}
for some $v_i = w_i \bmod W_i$.
However, such $v_i$ might not satisfy $X_{j_i} \lor v_i \in \ind$, which is necessary for the next boundby the greedy property.
\end{remark}

\begin{proof}[Proof of Theorem~\ref{thm:greedy-DR-matroid-curvature}]
We prove this theorem through an induction on $k$.
The case $k = 1$ is trivial.
Consider  $k > 1$.
Let $X_{i}$ be the output of the greedy algorithm after the $i$-th iteration, and $a_i$ be the chosen element, that is,  $X_i = X_{i-1} \lor a_i$. 
Let $X^*$ be the optimal solution.
By (I2), there exists $w \le X^*$ such that $X_{i-1} \lor w$ is feasible.
By greedy property and curvature, we have the following:
\begin{align}
    f(X_k) - f(X_{k-1}) &\ge f(X_{k-1} \lor w) - f(X_{k-1})\\
    &\ge (1-c) f(\underline w),
\end{align}
where $\underline{w} \le w$ is a minimal element.
By upward DR-submodularity,
there exists $\mathring{X^*} \prec X^*$ such that
\begin{align}
    f(\underline w) = f(\underline w) - f(\bot)
    \ge f(X^*) - f(\mathring{X^*}). 
\end{align}
By the induction hypothesis, $(1-c)f(\mathring{ X^*}) \le f(X_{k-1})$.
Therefore,
\begin{align}
    f(X_k) - f(X_{k-1}) \ge (1-c)f(X^*) - f(X_{k-1}).
\end{align}
Hence, $X_k$ is a (1-c)-approximation solution.
\end{proof}

\section*{Acknowledgment}

We thank Hiroshi Hirai for fruitful discussion.
This work was supported by JSPS KAKENHI Grant Number 19K20219.
The second author is financially supported by JSPS Research Fellowship Grant Number JP19J22607.


\bibliographystyle{plain}
\bibliography{main}

\end{document}